\theoremstyle{plain}
\newtheorem{thmx}{Theorem}
\newtheorem{thm}{Theorem}[section]
\newtheorem{lem}[thm]{Lemma}
\newtheorem{prop}[thm]{Proposition}
\newtheorem{cor}[thm]{Corollary}
\theoremstyle{definition}
\newtheorem{defn}[thm]{Definition}
\newtheorem{rem}[thm]{Remark}
\newtheorem{ques}[thm]{Question}
\newcommand{\Z}{{\mathbb{Z}_+}}
\newcommand{\N}{\mathbb{N}}
\newcommand{\mZ}{\mathbb Z}
\newcommand{\U}{\mathcal U}
\DeclareMathOperator{\diam}{diam}
\DeclareMathOperator{\supp}{supp}
\begin{document}
\title{Positive topological entropy and $\Delta$-weakly mixing sets}
\author[W. Huang, J. Li, X.~Ye and X. Zhou]{Wen Huang, Jian  Li, Xiangdong Ye and Xiaoyao Zhou}
\address[W. Huang, X.~Ye, X. Zhou]{Wu Wen-Tsun Key Laboratory of Mathematics, USTC, Chinese Academy of Sciences and
School of Mathematics, University of Science and Technology of China,
Hefei, Anhui, 230026, P.R. China}
\address[J. Li] {Department of Mathematics, Shantou University, Shantou, Guangdong, 515063,P.R. China}
\email{wenh@mail.ustc.edu.cn; lijian09@mail.ustc.edu.cn; yexd@ustc.edu.cn; zhouxiaoyaodeyouxian@126.com}

\date{\today}

\begin{abstract}
The notion of $\Delta$-weakly mixing set is introduced,
which shares similar properties of weakly mixing sets.
It is shown that if a dynamical system has positive topological entropy,
then the collection of $\Delta$-weakly mixing sets is residual in the closure of
the collection of entropy sets in the hyperspace.
The existence of $\Delta$-weakly mixing sets in a topological dynamical system
admitting an ergodic invariant measure which is not measurable distal
is obtained.
Moreover, Our results generalize several well known results
and also answer several open questions.
\end{abstract}
\keywords{$\Delta$-weakly mixing, positive topological entropy, entropy sets,
measurable distal extensions, non-classical Li-Yorke chaos}
\subjclass[2010]{37B05, 37B40, 37A35}
\thanks{Huang is supported by NNSF of China (11225105, 11431012),
Li is supported by NNSF of China (11401362, 11471125),
Ye is supported by NNSF of China (11371339, 11431012)
and Zhou is supported by NNSF of China (11271191).}
\maketitle
\section{Introduction}

The main aim of the current paper is to understand how complicated a topological dynamical system
with positive topological entropy or with a non-distal ergodic invariant  measure could be.
In the process to do so, we strengthen several
well known results along the line, and answer affirmatively or negatively several open questions.
To explain the results that we obtain, we start with some definitions.

A (topological) dynamical system $(X,T)$ is a compact metric space $(X,\rho)$
with $T$ being a continuous map from $X$ to itself.
We say that a dynamical system $(X,T)$ is (topologically) transitive if
for every two non-empty open subsets $U$ and $V$ of $X$ there exists a positive integer $n$
such that $U\cap T^{-n}V\neq\emptyset$. It is (topologically) weakly mixing if
the product system $(X\times X,T\times T)$ is transitive. By the Furstenberg intersection Lemma, we know  that if
$(X,T)$ is weakly mixing, then for every $n\geq 2$, the $n$-th product system
$(X^n,T^{(n)}):=(X\times X\times\dotsb \times X, T\times T\times\dotsb \times T)$ ($n$-times) is transitive.

A dynamical system $(X,T)$ is $\Delta$-transitive if
for every $d\geq 2$ there exists a residual subset $X_0$ of $X$ such that
for every $x\in X_0$ the diagonal $d$-tuple $x^{(d)}=:(x,x,\dotsc,x)$  has a dense orbit under the action
$T\times T^2\times\dotsb\times T^d$.
There are two important classes of dynamical systems which are  $\Delta$-transitive.
Glasner~\cite{G94} proved that if a minimal system is weakly mixing
then it is $\Delta$-transitive.
Recently, the authors in \cite{KLOY14} showed that
if a dynamical system admits a weakly mixing invariant measure with full support
then it is $\Delta$-transitive.

In~\cite{M10}, Moothathu showed that $\Delta$-transitivity implies weak mixing,
but there exists some strongly mixing systems which are not $\Delta$-transitive.
Using a class of Furstenberg families introduced in \cite{CLL14a},
the authors in \cite{CLL14} characterized
the entering time sets of transitive points into open sets in $\Delta$-transitive systems.
We will give a more natural characterization of $\Delta$-transitive systems
by the generalized hitting time sets of open sets (see Proposition~\ref{prop:Delta-transitive}).

Intuitively, one can define that a dynamical system $(X,T)$ is $\Delta$-weakly mixing if
the product system $(X\times X,T\times T)$ is $\Delta$-transitive.
We show that this kind of $\Delta$-weakly mixing is in fact equivalent to $\Delta$-transitivity
(see Proposition~\ref{prop:Delta-transitive-weak-mixing}),
and then $\Delta$-transitivity shares similar properties of weakly mixing.

Blanchard and Huang \cite{BH08} introduced a local version of weak mixing, named it as weak mixing sets.
A closed subset $A$ of $X$ with at least two points
is weakly mixing if for any $k\in\N$, any non-empty open subsets $U_1,U_2,\dotsc,U_k$ and
$V_1,V_2,\dotsc,V_k$ of $X$ intersecting $A$ (that is $U_i\cap A\neq\emptyset$ and $V_i\cap A\neq\emptyset$)
there exists $m\in\N$ such that $U_i\cap T^{-m}V_i$ is a non-empty open set intersecting $A$
for each $1\leq i\leq k$.

Inspired by this we say that a closed subset $A$ of $X$ is $\Delta$-transitive if for every $d\geq 2$
there exists a residual subset $A_0$ of $A$ such that
for every $x\in A_0$, the orbit closure of the diagonal $d$-tuple $x^{(d)}$ under the action
$T\times T^2\times\dotsb\times T^d$ contains $A^d$.
A closed subset $A$ of $X$ with at least two points is $\Delta$-weakly mixing  if for every $n\geq 1$,
$A^n$ is $\Delta$-transitive in the $n$-th product system $(X^n,T^{(n)})$.
It is easy to see that $(X,T)$ is $\Delta$-weakly mixing if and only if
$X$ is a $\Delta$-weakly mixing set.
Note that when defining the local version, a subset $A$ is $\Delta$-weakly mixing is
not equivalent to the $\Delta$-transitivity of $A$ (see Remark \ref{rem:two-points-delta-transitve-sets}).

The first part of this paper studies the properties of $\Delta$-weakly mixing subsets
and $\Delta$-weakly mixing systems.  A nice characterization of weak mixing was obtained in \cite{XY92},
and was extended to weakly mixing sets in \cite{BH08}.
We show that $\Delta$-weakly mixing sets share a similar characterization as weakly mixing sets.

\begin{thmx}\label{thm:Delta-weakly-mixing-set}
Let $(X,T)$ be a dynamical system. Then a closed subset $E$ of $X$ is $\Delta$-weakly mixing if and only if
there exists a countable Cantor sets $C_1\subset C_2\subset \dotsb$ of $E$
such that $C=\bigcup_{k=1}^\infty C_k$
is dense in $E$ and
\begin{enumerate}
  \item for any $d\in\N$, any subset $A$ of $C$, and any continuous functions $g_j: A\to E$ for $j=1,2,\dotsc,d$
  there exists an increasing sequence $\{q_k\}_{k=1}^\infty$ of positive integers such that
  \[\lim_{i\to\infty}T^{j\cdot q_i}x=g_j(x)\]
  for every $x\in A$ and $j=1,2,\dotsc,d$;
  \item for any $d\in \N$, $k\in\N$, any closed subset $B$ of $C_k$,
  and continuous functions $h_j: B\to E$ for $j=1,2,\dotsc,d$
  there exists an increasing sequence $\{q_k\}_{k=1}^\infty$  of positive integers such that
  \[\lim_{k\to\infty}T^{j\cdot q_k}x=h_j(x)\]
  uniformly on $x\in B$ and $j=1,2,\dotsc,d$.
\end{enumerate}
\end{thmx}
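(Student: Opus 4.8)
The statement is an equivalence, and the two implications are of rather different character; I would dispose of the sufficiency (the ``if'') direction first, since it uses only condition~(1). So suppose the nested Cantor sets $C_1\subseteq C_2\subseteq\dotsb$ with $C=\bigcup_k C_k$ dense in $E$ are given. To see that $E$ is $\Delta$-weakly mixing I must show that for every $n\geq 1$ the set $E^n$ is $\Delta$-transitive in $(X^n,T^{(n)})$, i.e.\ that for every $d\geq 2$ the set
\[
R_{n,d}=\bigl\{\mathbf{x}\in E^n:\ \overline{\orb\bigl(\mathbf{x}^{(d)},\,T^{(n)}\times(T^{(n)})^2\times\dotsb\times(T^{(n)})^d\bigr)}\supseteq E^{nd}\bigr\}
\]
is residual in $E^n$. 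This set is $G_\delta$ (fix a countable base of $E^{nd}$ and intersect the open conditions ``the $\Delta$-orbit meets the basic set''), so it suffices to prove density. Since $C$ is dense and $E$ is perfect, tuples $\mathbf{x}=(x_1,\dotsc,x_n)$ with pairwise distinct coordinates in $C$ are dense in $E^n$; for such an $\mathbf{x}$ and any target $(y_{j,i})\in E^{nd}$ I set $A=\{x_1,\dotsc,x_n\}$ and $g_j(x_i)=y_{j,i}$ and invoke~(1) to obtain $q_i$ with $T^{j q_i}x_i\to y_{j,i}$, which places the $\Delta$-orbit of $\mathbf{x}^{(d)}$ arbitrarily close to the prescribed target. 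Hence each such $\mathbf{x}$ lies in $R_{n,d}$, so $R_{n,d}$ is dense, hence residual, and $E$ is $\Delta$-weakly mixing.

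For the necessity direction I would argue in two stages: a dynamical stage extracting a simultaneous approximation property from $\Delta$-weak mixing, and a geometric stage building the Cantor sets. Assuming $E$ is $\Delta$-weakly mixing, each $R_{n,d}$ contains the residual subset of $E^n$ furnished by the $\Delta$-transitivity of $E^n$, hence is itself residual, and so is $R_n:=\bigcap_{d\geq2}R_{n,d}$. Membership in $R_n$ encodes a \emph{multiple simultaneous approximation} statement: if $(x_1,\dotsc,x_n)\in R_n$ has pairwise distinct coordinates then for every $d$, every $\varepsilon>0$, and every array $(y_{j,i})_{j\leq d,\,i\leq n}$ there is a single time $m$ with $\rho(T^{jm}x_i,y_{j,i})<\varepsilon$ for all $i$ and $j$ simultaneously. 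It is precisely here that $\Delta$-weak mixing, and not merely ordinary weak mixing, is required, since the one time $m$ must serve all of the distinct powers $T^{m},T^{2m},\dotsc,T^{dm}$ at once.

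For the geometric stage I would feed the residual sets $\{R_n\}_{n\geq1}$ into a Kuratowski--Mycielski type theorem, realizing $C=\bigcup_kC_k$ through the usual dyadic scheme in which $C_k$ is presented as $\bigcap_l\bigcup_{|s|=l}C_{k,s}$ with the closed pieces $C_{k,s}$ shrinking in diameter, and arranging that every tuple of distinct points drawn from $C$ lands in the appropriate $R_n$, with the approximating times controllable uniformly on each finite level $\{C_{k,s}:|s|=l\}$. Condition~(2) then follows by approximating a closed $B\subseteq C_k$ through a finite net adapted to a deep dyadic level, applying the simultaneous approximation to steer each piece toward the nearly constant values of the $h_j$, and promoting this to uniform convergence on $B$ using the continuity of the $h_j$ and the shrinking of the pieces; a diagonal choice over accuracy $1/l$ produces the single sequence serving all $j$. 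Condition~(1) follows by the same device read at each dyadic resolution: for a fixed $x\in A$ lying in a piece $C_{k,s}$ the value $g_j(x)$ is nearly the value assigned to that piece, so a single sequence of times steering all level-$l$ pieces toward the sampled values of $g_j$ yields $T^{jq_i}x\to g_j(x)$ at every $x\in A$ at once, which is exactly the pointwise (rather than uniform) assertion permitted for an arbitrary, possibly uncountable, $A$.

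The main obstacle is this geometric stage, and within it the upgrade to \emph{uniform} convergence in~(2). The transitivity hypothesis supplies, a priori, only approximation at finitely many prescribed points and for finitely many targets, whereas~(2) demands $\sup_{x\in B}\rho(T^{jq_l}x,h_j(x))\to 0$; bridging this gap forces the Cantor sets to be built so that the approximating times can be chosen uniformly on ever-finer finite levels of each $C_k$, and that the nested induction keep these uniform estimates coherent across the simultaneously evolving powers $T^{m},\dotsc,T^{dm}$. Managing this bookkeeping---arranging in the Mycielski induction that one time $m$ controls all coordinates, all targets, and all $d$ powers to the required uniform accuracy on each $C_k$---is where I expect the real work to lie; the passage to pointwise convergence in~(1) is comparatively soft once the uniform-on-$C_k$ machinery is in place.
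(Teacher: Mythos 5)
Your sufficiency argument is correct and is essentially the paper's: both reduce to applying condition~(1) to a finite set $A$ of distinct points of $C$ chosen inside the given open sets (the paper phrases the conclusion via hitting time sets, you via the residual-set characterization of $\Delta$-transitive subsets, which is Proposition 3.4 of the paper; the difference is cosmetic).

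The necessity direction, however, has a genuine gap, and you in fact point at it yourself without closing it. Your stage one --- residuality of $R_{n,d}$ in $E^n$ and the resulting simultaneous approximation for finite tuples of distinct points --- is fine but too weak: membership of every finite tuple of distinct points of $C$ in $R_n$ produces, for each finite set, each target array and each $\varepsilon$, \emph{some} time $m$, with no control of how $T^{jm}$ acts off those finitely many points. From such purely pointwise data one cannot extract a single sequence $\{q_k\}$ that works for an uncountable $A$ in~(1), let alone uniformly on a closed $B$ in~(2); no diagonal argument over finite subsets reaches an uncountable set. The standard Mycielski/Kuratowski--Mycielski theorem applied to the sets $R_n\subset E^n$ gives exactly the finite-tuple statement and nothing more, so the phrase ``with the approximating times controllable uniformly on each finite level'' is not something you get to ``arrange'' afterwards --- it is the whole content of the theorem, and your proposal defers it (``where I expect the real work to lie'') rather than proving it.

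The paper's device for closing precisely this gap is to change the object to which the Baire-category argument is applied: instead of residual subsets of the finite powers $E^n$, it works with a \emph{hereditary} subset of the hyperspace $2^E$. A set $A$ is called $(\varepsilon,d)$-spread in $E$ if $A$ is covered by finitely many balls $B(z_i,\delta)$ and, for \emph{every} assignment $g_j$ of targets in $E$ to the centers, some single time $k$ satisfies $T^{jk}(B(z_i,\delta))\subset B(g_j(z_i),\varepsilon)$ for all $i,j$; the crucial point is that the time moves whole balls, not just points. The paper proves that each collection $\mathcal{X}(\varepsilon,d,E)$ is open in $2^X$ and that $\mathcal{X}(\varepsilon,d,E)\cap 2^E$ is dense in $2^E$ (this is where $\Delta$-weak mixing enters, via a nested shrinking of open sets over the finitely many target assignments), so $\mathcal{X}(E)=\bigcap_{d,k}\mathcal{X}(1/k,d,E)$ meets $2^E$ in a residual set; then the hyperspace Kuratowski--Mycielski theorem (Lemma 2.3 of the paper) yields Cantor sets $C_1\subset C_2\subset\dotsb$ with each $C_k\in\mathcal{X}(E)$ and $C=\bigcup_k C_k$ dense in $E$. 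Conditions~(2) and~(1) then follow from the spread property by routine uniform-continuity estimates (Lemmas 3.9 and 3.10): the ball-covering built into the definition is exactly what converts finitely many controlled centers into uniform convergence on closed $B\subset C_k$ and pointwise convergence on arbitrary $A\subset C$. Without this (or an equivalent hands-on induction actually carried out), your outline does not constitute a proof of the necessity direction.
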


To get a topological analogue  of Kolmogorov systems in ergodic theory,
Blanchard initiated  the study of local entropy entropy and  introduced the
notion of entropy pairs~\cite{B92,B93}.
Later, Huang and Ye \cite{HY06} introduced the notion of entropy $n$-tuples ($n\geq 2$)
in both the topological and measure theoretical settings.
In order to find where the entropy is concentrated,
the notion of an entropy set was introduced in \cite{DYZ} and  \cite{BH08}.
Moreover, the authors in \cite{BH08} proved that there are many weakly mixing
sets in dynamical system with  positive topological entropy involving the notion of entropy sets.
More precisely, let $WM(X, T)$ be the collection of weakly mixing sets of $(X,T)$ and
$H(X, T)$ be the closure of the collection of entropy sets in the hyperspace.
They showed that in a dynamical system $(X,T)$ with positive entropy,
$H(X, T )\cap WM(X, T )$ is a dense $G_\delta$ subset of $H(X,T)$.
This result was further generalized in \cite{L15}.
Let $\Delta$-$WM(X,T)$ be the collection of $\Delta$-weakly mixing sets in $(X,T)$.
In the second part of the paper we will strengthen the above result in \cite{BH08} by showing

\begin{thmx}\label{thm:positive-entopy-Delta-WM}
If a dynamical system $(X,T)$ has positive topological entropy,
then $H(X, T )\cap \Delta\text{-}WM(X, T )$ is a dense $G_\delta$ subset of $H(X,T)$.
\end{thmx}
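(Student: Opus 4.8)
The plan is to establish the two halves of the assertion separately: that $H(X,T)\cap\Delta\text{-}WM(X,T)$ is a $G_\delta$ subset of $H(X,T)$, and that it is dense. Since $\Delta$-weak mixing is a strengthening of weak mixing, this refines the Blanchard--Huang theorem quoted above, and I would keep their result in reserve as a sanity check and as a possible source of approximating sets. The density is the essential content; the $G_\delta$ part is structural.

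For the $G_\delta$ part I would unwind the definition so that ``$E$ is $\Delta$-weakly mixing'' becomes a countable conjunction of open conditions on $E\in 2^X$. By definition $E$ is $\Delta$-weakly mixing iff for every $n\ge1$ the set $E^n$ is $\Delta$-transitive in $(X^n,T^{(n)})$, i.e.\ for every $d\ge2$ a residual set of diagonal points $z^{(d)}$ (with $z\in E^n$) satisfies $\overline{\orb(z^{(d)})}\supseteq (E^n)^d$ under the action $T^{(n)}\times(T^{(n)})^2\times\dotsb\times(T^{(n)})^d$. Fixing a countable base for $X$ and applying the standard reduction of point-transitivity to open hitting conditions, this is equivalent to requiring, for each finite list of basic boxes meeting the relevant products, that there exist a time $q$ realizing the prescribed intersections via $T^{q},T^{2q},\dotsc,T^{dq}$. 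Each such ``there exists $q$'' requirement, read as a constraint on the closed set $E$, defines an open subset of the hyperspace (by continuity of $T$ and the form of the Hausdorff metric), and intersecting the countably many of them over all $n,d$ and all finite lists exhibits $\Delta\text{-}WM(X,T)$ as a $G_\delta$; intersecting with the closed set $H(X,T)$ preserves this.

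For density it suffices, since $H(X,T)$ is the closure of the entropy sets, to place a $\Delta$-weakly mixing set within $\varepsilon$ (Hausdorff) of an arbitrary entropy set $E$. One natural route is to verify the approximating set through Theorem~\ref{thm:Delta-weakly-mixing-set}: build an increasing chain of Cantor sets $C_1\subset C_2\subset\dotsb$ inside an $\varepsilon$-neighbourhood of $E$, with dense union, satisfying the two interpolation clauses, the raw material being the independence furnished by positive entropy, which along a suitable infinite set of times lets one steer orbit segments over an entropy set independently (in the spirit of independence sets and the local variational principle). A conceptually cleaner alternative produces, near $E$, the support of a weakly mixing invariant measure: if $\mu$ is weakly mixing with $\supp\mu$ within $\varepsilon$ of $E$, then $\mu^{(n)}$ is a weakly mixing measure of full support on the subsystem $(E^n,T^{(n)})$ where $E^n=(\supp\mu)^n$, so by the result of \cite{KLOY14} each $(E^n,T^{(n)})$ is $\Delta$-transitive; since $\supp\mu$ is invariant, this says each $E^n$ is a $\Delta$-transitive set, whence $\supp\mu$ is $\Delta$-weakly mixing.

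The main obstacle is simultaneity across powers: the $\Delta$-condition asks a single time $q_i$ to drive $T^{q_i},T^{2q_i},\dotsc,T^{dq_i}$ to their targets at once, whereas the Blanchard--Huang construction needs only one power $T^{m}$. Overcoming this requires upgrading single-power independence to the diagonal multi-power action, for which I expect to combine the entropy-theoretic independence with a Furstenberg-type multiple-recurrence input so that the set of admissible times $q_i$ remains nonempty, indeed large. In the measure-theoretic route the same difficulty reappears as the task of manufacturing a genuinely weakly mixing measure from positive entropy, whose ergodic measures are in general only relatively weakly mixing over their (possibly nontrivial) Pinsker factor; resolving this---e.g.\ by passing to vertical fibres of the extension over the maximal zero-entropy factor, on which the dynamics is relatively K and so behaves like a weakly mixing measure---is where the bulk of the work lies.
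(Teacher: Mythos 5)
Your $G_\delta$ half is essentially the paper's Lemma~\ref{lem:Delta-WM-G-delta} in sketch form, and it can be made to work, although your claim that each ``there exists $q$'' requirement is \emph{open} in $2^X$ is not quite right as stated: the requirement is an implication whose hypothesis (``the boxes meet $E$'') is itself a condition on $E$, and the paper circumvents this by quantifying instead over finite covers of $E$ by sets of diameter less than $1/m$, which is what makes each $A_m(X,T)$ genuinely open. The serious problems are in the density half. First, your reduction is incorrect: to show $H(X,T)\cap\Delta\text{-}WM(X,T)$ is dense in $H(X,T)$ it does \emph{not} suffice to place a $\Delta$-weakly mixing set within $\varepsilon$ of each entropy set $E$; the approximating set must itself lie in $H(X,T)$. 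The paper handles this by constructing a Cantor set $A$ that is simultaneously $\Delta$-weakly mixing \emph{and} an entropy set, the latter certified by Theorem~\ref{thm:entropy-tuple-independence-sets} because every finite tuple of pieces of $A$ retains an independence set of positive density. Neither of your two routes addresses this requirement, and your measure-theoretic route (b) is a dead end for an additional reason: the support of an invariant measure is an invariant set, so it cannot Hausdorff-approximate an arbitrary entropy set, quite apart from the fact that positive entropy yields only relative weak mixing over the Pinsker factor and in general no weakly mixing measure at all.

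Second, the crux that you correctly identify --- producing a single time $n$ that works simultaneously for $T,T^2,\dotsc,T^{d}$ --- is precisely where your proposal stops (``I expect to combine\dots''), and the naive combination of independence with Szemer\'edi does not suffice. Since the family $\mathcal{P}_{\bm{A}_k}$ of independence sets is hereditary and shift-invariant, applying Szemer\'edi directly to a positive-density independence set does yield one arithmetic progression, hence one time $n$ with all intersections $\bigcap_{j} T^{-jn}A_{s(j)}$ non-empty; but this single application destroys the structure: nothing then guarantees that the \emph{intersected} collection $\{A_{\sigma_1}\cap T^{-n}A_{\sigma_2}\cap\dotsb\cap T^{-(k-1)n}A_{\sigma_k}\}$ again has an independence set of positive density, so the induction --- which must run through every $k$ both to handle all $d$ and to shrink diameters to a Cantor set --- halts after one step, and the limit set fails to be an entropy set (the first gap again). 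The paper's mechanism is finer: Lemma~\ref{important} (Kerr--Li) produces a positive-density set $J$ that has positive density \emph{with respect to} $\mathcal{P}_{\bm{A}_k}$, Szemer\'edi extracts an arithmetic progression inside $J$, and the ``with respect to'' property then furnishes a positive-density set $K$ with $K+\{0,n_k,\dotsc,(k-1)n_k\}\in\mathcal{P}_{\bm{A}_k}$, i.e.\ $K$ is a positive-density independence set for the intersected collection; Lemma~\ref{lem3.33} then lets one shrink the pieces while keeping positive-density independence. This relative-density device is the missing idea, and without it (or a substitute playing the same role) the construction you outline cannot be completed.
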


In fact, by results in \cite{BGKM02} and the method in~\cite{BH08},
we know that if a dynamical system admits an ergodic invariant measure
which is not measurable distal then there are also many weakly mixing sets.
We strengthen this result by showing the existence of $\Delta$-weakly mixing sets
in the same setting. We note that unlike the method
in the previous results we need to use results from the proof of multiple ergodic theorem by Furstenberg
in~\cite{F81}, not only the structure theorem of an ergodic system developed
by Furstenberg~\cite{F81} and Zimmer~\cite{Z76,Z76-2}.

\begin{thmx}\label{thm:measureable-WM}
Let $(X,T)$ be a dynamical system.
If there exists an ergodic invariant measure on $\mu$ such that
$(X,\mathcal{B},\mu,T)$ is not measurable distal,
then there exist ``many'' $\Delta$-weakly mixing sets in $(X,T)$.
\end{thmx}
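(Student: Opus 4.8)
The plan is to combine the Furstenberg--Zimmer structure theorem with the finer analysis internal to Furstenberg's proof of the multiple ergodic theorem, and then to convert the resulting measure-theoretic information into the nested Cantor sets of Theorem~\ref{thm:Delta-weakly-mixing-set}. First I would invoke the structure theorem. Since $\mu$ is ergodic and $(X,\mathcal B,\mu,T)$ is not measurable distal, its maximal distal factor $\pi\colon (X,\mathcal B,\mu,T)\to (Z,\mathcal Z,\nu,T)$ is a proper factor and the extension $\pi$ is relatively weakly mixing. In particular the fibers $\pi^{-1}(z)$ carry at least two points of $\supp(\mu)$ on a set of $z$ of positive measure; fix such a fiber over a point $z_0$ that is generic for $\nu$. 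Because $Z$ is distal, the diagonal point $(z_0,\dotsc,z_0)$ is multiply recurrent under $T\times T^2\times\dotsb\times T^d$: the set of $n$ for which $T^{jn}z_0$ is close to $z_0$ for all $1\le j\le d$ is syndetic. Thus along suitable times the base returns arbitrarily near the diagonal, which is what allows a configuration spread across the $d$ distinct powers to be captured inside a single fiber.

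The crucial step is to control the fiber behaviour along these return times. Here I would use the lemma, established inside Furstenberg's proof of the multiple ergodic theorem, that a relatively weakly mixing extension is relatively weakly mixing of all orders: for bounded $f_0,\dotsc,f_d$,
\[
\frac1N\sum_{n=1}^{N}\Bigl(\prod_{j=0}^{d}f_j\circ T^{jn}-\prod_{j=0}^{d}\mathbb E(f_j\mid\mathcal Z)\circ T^{jn}\Bigr)\lra 0
\quad\text{in }L^2(\mu).
\]
Combined with the base recurrence, this decouples the fiber coordinates across the levels $T^{n},T^{2n},\dotsc,T^{dn}$, and so produces, for any prescribed finite family of target points in the fiber and any precision, a time $n$ realising all of them simultaneously. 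It is exactly this relatively independent joining, rather than the mere existence of the distal tower, that the structure theorem alone does not deliver---hence the remark preceding the statement.

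Finally I would pass to Cantor sets. Using Lusin's theorem to replace measurable fiber data by continuous data on large compact subsets, and feeding the simultaneous-realisation property into a Kuratowski--Mycielski construction, I would build an increasing sequence of Cantor sets $C_1\subset C_2\subset\dotsb$ with dense union in a closed set $E\subseteq\supp(\mu)$ for which condition~(1) of Theorem~\ref{thm:Delta-weakly-mixing-set} holds on arbitrary subsets of the union and, after a diagonalisation upgrading pointwise to uniform realisation, condition~(2) holds on closed subsets of each $C_k$. Theorem~\ref{thm:Delta-weakly-mixing-set} then gives that $E$ is $\Delta$-weakly mixing, and performing the construction over the positive-measure set of nondegenerate fibers yields ``many'' such sets.

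I expect the main obstacle to lie in synchronising the two regimes: on the distal base the powers $T,T^2,\dotsc,T^d$ are rigidly linked along the orbit of $(z_0,\dotsc,z_0)$, whereas relative weak mixing of all orders only controls the fiber behaviour in a Ces\`aro-averaged sense. One must show that the base-return times and the fiber-decoupling times can be chosen from a common, sufficiently rich set of integers, which is precisely the delicate combination of multiple recurrence for distal systems with the van der Corput estimates for weakly mixing extensions that appears in Furstenberg's argument; obtaining the uniformity demanded in condition~(2) adds a further layer of diagonalisation on top of this.
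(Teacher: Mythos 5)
Your overall architecture---structure theorem giving a weakly mixing extension over the maximal distal factor, Furstenberg's lemma that such an extension decouples along distinct powers, multiple recurrence on the base, synchronization of the two families of times, and a final Cantor-set construction---is exactly the paper's. But the way you set up the fiber analysis contains a genuine gap that the paper's proof is specifically designed to avoid. You fix the fiber $\pi^{-1}(z_0)$ at the outset and propose to use pointwise syndetic multiple recurrence of $z_0$ in the distal base. There are two problems with this. First, the base is only a \emph{measure-theoretic} distal system; the claim that the diagonal point $(z_0,\dotsc,z_0)$ is multiply recurrent with syndetic return times is a statement about a topologically distal system, and no such topological model is available here without invoking machinery far beyond what the paper uses. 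Second, and more fundamentally, the decoupling you quote (Furstenberg's results, stated in the paper as Propositions~\ref{prop:WM-F81-prop78} and~\ref{prop:WM-density-one}) is an almost-everywhere statement: for each $\varepsilon,\delta$ and each $n$ outside a density-zero set, the factorization $\mu_y(A_{s(1)}\cap T^{-n}A_{s(2)}\cap\dotsb)\approx \mu_y(A_{s(1)})\mu_{S^ny}(A_{s(2)})\dotsm$ holds only for a set of $y$ of measure at least $1-\delta$, with the exceptional set depending on $n$. Nothing guarantees that your pre-chosen fiber over $z_0$ avoids the exceptional set for even one of your return times, so the step ``produces, for any prescribed finite family of target points in the fiber \ldots a time $n$ realising all of them simultaneously'' fails for a fiber fixed in advance.

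The paper resolves exactly this by never fixing a fiber: at every stage of the induction it carries a \emph{positive-measure set} of base points (property (4) of the construction: $\nu\{y\colon \mu_y(A_\sigma)>c_k,\ \sigma\in\mathcal{E}_k\}>0$). Multiple recurrence is applied not to a point but to such a set---Theorem~\ref{thm:F81-SZ} applied to $id,S,S^2,\dotsc,S^{r-1}$ and $\Omega_c$ yields a set $H$ of return times of positive lower density---and $H$ is then intersected with the density-one set $F$ coming from Proposition~\ref{prop:WM-density-one}; for $n\in F\cap H$ a positive-measure set of $y$'s enjoys both the return property and the factorization, which keeps the induction alive (Proposition~\ref{prop:wm-extension} together with Lemma~\ref{lem:to-small-sets}). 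The single fiber containing the limiting Cantor set emerges only at the end, by forcing the base sets $C_k$ to satisfy $\diam(C_k)<1/k$, so that $\pi(A)$ is a single point $y_0$. Note also that $\Delta$-weak mixing of the limit set is checked directly from the nested construction via the independence characterization, as in Theorem B, with no need for Lusin's theorem or the extra diagonalisation you propose to reach conditions (1) and (2) of Theorem~\ref{thm:Delta-weakly-mixing-set}. The missing idea in your proposal is precisely this replacement of ``a fixed generic fiber plus pointwise recurrence'' by ``shrinking positive-measure families of fibers plus measure-theoretic multiple recurrence''; as written, your synchronization step would fail.
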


We remark that Theorems B and C can be generalized to $\mZ^d$-actions easily.
Since in the proofs we need to use Szemeredi's Theorem,
which is not clear for  countable discrete amenable group actions,
we do not know whether Theorems B and C are still valid
for countable discrete amenable group actions.

In the final section we will discuss the non-classical Li-Yorke chaos,
 which generalizes several well known results in \cite{AGHSY}
and also answer  several open questions in \cite{M12}
(see Questions \ref{q-1}, \ref{q-2} and \ref{q-3} stated in our terminology).

\section{Preliminaries}
In this section, we present some basic notations, definitions and results.
\subsection{Density of subsets of non-negative integers}
Let $\N$ and $\mathbb{Z}_+$ denote the collection of positive integers and non-negative integers respectively.
For a subset $F$ of $\mathbb{Z}_+$, the {\it upper density} of $F$ is defined by
\[\overline{D}(F)=\limsup_{n\to\infty} \frac{|F\cap\{0,1,\dotsc,n-1\}|}{n},\]
where $|\cdot|$ is the number of elements of a finite set.
Similarly, the {\it lower density} of $F$ is defined by
\[\underline{D}(F)=\liminf_{n\to\infty} \frac{|F\cap\{0,1,\dotsc,n-1\}|}{n}.\]
We  may say $F$ has {\it density} $D(F)$ if
$\underline{D}(A)=\overline{D}(A)$, in which case $D(A)$ is equal to this common value.
We will need the following famous Szemeredi's Theorem~\cite{S75},
see \cite{F81} for a ergodic theory proof by Furstenberg.
\begin{thm}[Szemeredi's Theorem]
If $F\subset\Z$ has positive upper density, then it contains arithmetic progressions of arbitrary finite
length, that is for every $d\geq 1$ there exist $m,n\geq 1$ such that $\{m,m+n,m+2n,\dotsc,m+dn\}\subset F$.
\end{thm}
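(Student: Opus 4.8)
The plan is to follow Furstenberg's ergodic-theoretic route: reduce the purely combinatorial statement to a \emph{multiple recurrence theorem} for measure-preserving systems via the Furstenberg Correspondence Principle, and then establish multiple recurrence through the structure theory of such systems.

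First I would set up the correspondence. Given $F\subset\Z$ with $\overline{D}(F)>0$, form the subshift $X=\overline{\{\sigma^n\mathbf{1}_F:n\in\mZ\}}\subset\{0,1\}^{\mZ}$, where $\sigma$ is the left shift, and let $A=\{x\in X:x(0)=1\}$, a clopen set. Choosing a sequence $N_i\to\infty$ realizing the upper density and passing to a weak-$*$ limit $\mu$ of the empirical measures $\frac{1}{N_i}\sum_{n=0}^{N_i-1}\delta_{\sigma^n\mathbf{1}_F}$ produces a $\sigma$-invariant Borel probability measure with $\mu(A)\ge\overline{D}(F)>0$. Since $\sigma^m\mathbf{1}_F$ lies in $A\cap\sigma^{-n_1}A\cap\dotsb\cap\sigma^{-n_d}A$ exactly when $m,m+n_1,\dotsc,m+n_d\in F$, a standard computation with the clopen set $A$ yields, for all $n_1,\dotsc,n_d$,
\[\overline{D}\bigl(F\cap(F-n_1)\cap\dotsb\cap(F-n_d)\bigr)\ge\mu(A\cap\sigma^{-n_1}A\cap\dotsb\cap\sigma^{-n_d}A).\]
Thus it suffices to find, for each $d\ge1$, some $n\ge1$ with
\[\mu(A\cap\sigma^{-n}A\cap\sigma^{-2n}A\cap\dotsb\cap\sigma^{-dn}A)>0,\]
which immediately produces $m$ with $\{m,m+n,\dotsc,m+dn\}\subset F$.

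Next I would invoke the Furstenberg Multiple Recurrence Theorem: for any measure-preserving system $(X,\mathcal{B},\mu,T)$, any $d\ge1$, and any $A$ with $\mu(A)>0$,
\[\liminf_{N\to\infty}\frac{1}{N}\sum_{n=0}^{N-1}\mu\Bigl(\bigcap_{j=0}^{d}T^{-jn}A\Bigr)>0,\]
which in particular forces a single $n\ge1$ as required above. By the ergodic decomposition it is enough to prove this for ergodic systems. Call a system SZ if it satisfies this $\liminf$-positivity for every $A$ with $\mu(A)>0$ and every $d$. The proof that every ergodic system is SZ proceeds by induction along the Furstenberg--Zimmer structure tower: every system is an inverse limit of a transfinite tower of factors in which each successive extension is a compact (isometric) extension, and the whole system is a weakly mixing extension of its maximal distal factor. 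One then verifies four facts: the trivial one-point system is SZ; the SZ property passes to inverse limits; the SZ property lifts through compact extensions; and the SZ property lifts through weakly mixing extensions.

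The hard part will be lifting through compact extensions. The weakly mixing step is comparatively soft: relative weak mixing lets one run a van der Corput estimate in $L^2(\mu)$ to show that the multiple-average integrals are asymptotically governed by the factor, so SZ of the factor suffices. The compact-extension step is the technical core: here one must prove a relative Szemer\'edi statement, decomposing the indicator over the base into almost periodic pieces and combining the almost periodicity coming from the isometric fibers with a van der Waerden--type simultaneous-coloring argument performed fiberwise over the SZ base. Carrying this relative combinatorial induction through, and checking that positivity survives the passage to inverse limits so that the induction reaches the full $\sigma$-algebra $\mathcal{B}$, is where essentially all the difficulty is concentrated; everything else is the bookkeeping of the correspondence principle and the structure theorem. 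For the details of these two lifting lemmas and of the structure theorem I would refer to Furstenberg~\cite{F81}.
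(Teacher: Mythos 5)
Your proposal is correct and follows exactly the route the paper itself points to: the paper does not prove Szemer\'edi's Theorem but cites it as a known result with Furstenberg's ergodic-theoretic proof in \cite{F81}, and your sketch (correspondence principle, reduction to multiple recurrence, induction along the Furstenberg--Zimmer tower with the compact and weakly mixing lifting lemmas) is an accurate outline of that proof. Deferring the two lifting lemmas and the structure theorem to \cite{F81} is appropriate here, since the paper treats the theorem as a black box in the same way.
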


A subset $F$ of $\N$ is {\it thick} it contains arbitrarily long blocks of consecutive integers, that is, for every
$N\geq 1$ there is $n\in\N$ such that $\{n,n+1,\dotsc,n+N\}\subset F$.
A collection $\mathcal{F}$ of subsets of $\N$ is called a {\it filter base} if
for any $F_1,F_2\in\mathcal{F}$ there exists a non-empty set $F\in\mathcal{F}$ such that $F\subset F_1\cap F_2$.

\subsection{Compact metric spaces and hyperspaces}
Let $(X,\rho)$ be a compact metric space.
We say that a non-empty subset $A$ of $X$ is {\it totally disconnected}
if there only connected subsets of $A$ are singletons.
By a {\it perfect set} we mean a non-empty closed set without isolated point,
by a {\it Cantor set} we mean a compact, perfect and totally disconnected set,
and by a {\it Mycielski set} we mean a set which can be expressed as a union of countably many Cantor sets.
For convenience we restate here a version of Mycielski's theorem (\cite[Theorem 1]{M64}) which we shall use.

\begin{thm}[Mycielski Theorem]
Let X be a perfect compact metric space.
If $R$ is a dense $G_\delta$ subset of $X\times X$, then there exists a dense Mycielski subset $K$ of $X$
such that for any two distinct points $x,y\in K$, the pair $(x,y)\in R$.
\end{thm}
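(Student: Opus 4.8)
The plan is to realize $K$ as a countable union of Cantor sets produced by a single Cantor-scheme construction with bookkeeping. First I would write $R=\bigcap_{n\geq 1}R_n$ with each $R_n$ open and dense in $X\times X$, and then replace $R_n$ by $\bigcap_{i\leq n}(R_i\cap R_i^{-1})$; this leaves $\bigcap_n R_n=R$ unchanged, makes the sequence decreasing, and makes each $R_n$ symmetric (using only that the flip map is a homeomorphism, so $R_i^{-1}$ is again dense open). I would also fix a countable base $\{V_j\}_{j\geq 1}$ of nonempty open subsets of $X$, which exists because a compact metric space is second countable.

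The core is the scheme itself. I would construct, by induction, a family of nonempty open sets indexed by finite binary strings $s\in 2^{<\omega}$ (in fact several such families, one rooted in each $V_j$, interleaved), with $\overline{U_{s0}}\cup\overline{U_{s1}}\subset U_s$ and $\overline{U_{s0}}\cap\overline{U_{s1}}=\emptyset$, with $\diam U_s\to 0$ as $|s|\to\infty$, and with the key independence condition that any two distinct nodes $s\neq t$ at a common level $n$ satisfy $\overline{U_s}\times\overline{U_t}\subset R_n$. Splitting a node into two disjoint subsets of small diameter is always possible because $X$ is perfect, so every nonempty open set is infinite. The independence condition is arranged one pair at a time: given current nonempty open sets $U_s,U_t$, density and openness of $R_n$ make $(U_s\times U_t)\cap R_n$ a nonempty open set, so I can shrink $U_s$ and $U_t$ to nonempty open sets whose product has closure inside $R_n$; since every step only shrinks nodes, conditions secured for earlier pairs are preserved.

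From each root $U_{j,\emptyset}\subset V_j$ the set $C_j=\bigcap_{n}\bigcup_{|s|=n}\overline{U_{j,s}}$ is a Cantor set contained in $V_j$, so $K=\bigcup_j C_j$ meets every basic open set and is therefore a dense Mycielski set. Given distinct $x,y\in K$, I would pick a level $n$ so large that every node at level $n$ has diameter below $\rho(x,y)$; then the level-$n$ nodes whose closures contain $x$ and $y$ are forced to be distinct, and the independence condition yields $(x,y)\in R_n$. Since the $R_n$ decrease and this holds for all sufficiently large $n$, it follows that $(x,y)\in\bigcap_n R_n=R$, as required.

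The main obstacle is to enforce the independence condition simultaneously for all pairs of distinct nodes coming from all the countably many roots, while keeping each stage of the construction finite. I would resolve this by a standard fusion argument: activate the root in $V_j$ only at stage $j$, so that at every finite stage only finitely many nodes are present and only finitely many node-pairs need to be pushed into the relevant $R_n$; an enumeration of the requirements (``shrink this node-pair into $R_n$'' together with ``activate the next root'') guarantees that in the limit every pair of distinct branches — including pairs originating from different roots $V_j$ and $V_k$ — has been handled. The delicate points, all flowing from perfectness of $X$, are that a nonempty open set can always be split into two disjoint subsets of arbitrarily small diameter and that shrinking for one requirement never destroys an earlier one; once these are verified, the diameter-shrinking forces distinct points of $K$ into distinct nodes, which is exactly what lets the independence condition apply.
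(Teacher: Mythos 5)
Your proof is correct, and it is worth noting that the paper itself does not prove this statement at all: it is quoted as a known result, cited to Mycielski's original paper (\cite[Theorem 1]{M64}), so there is no internal argument to compare against. What you have written is a complete, self-contained proof of exactly the pair version stated: the symmetrization and monotonization of the $R_n$, the interleaved Cantor schemes rooted in a countable base (giving density and the Mycielski structure), the pairwise shrinking into the open dense sets $R_n$ with the observation that shrinking preserves previously secured product conditions, and the fusion bookkeeping that activates one root per stage so each stage involves only finitely many requirements --- this is the standard Kuratowski--Mycielski construction, and every step you flag as delicate (splitting via perfectness, preservation under shrinking, distinctness of nodes at fine levels, handling cross-root pairs) is handled correctly. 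One trivial slip: after replacing $R_n$ by $\bigcap_{i\leq n}(R_i\cap R_i^{-1})$, the intersection over $n$ is $R\cap R^{-1}$ rather than $R$ itself; this is harmless, since $R\cap R^{-1}\subset R$ and your construction places all distinct pairs of $K$ into $R\cap R^{-1}$, which is more than the theorem asks. The only difference in scope from the literature is that the cited source (and the Kuratowski--Mycielski theory as in \cite{A04}) proves a more general statement covering countably many relations in arbitrarily many variables, whereas your argument is tailored to a single $G_\delta$ relation in two variables --- precisely the version the paper states and uses.
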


We will consider the following hyperspace associated to be the space $X$:
$2^X$, the collection  of all non-empty closed subsets of $X$.
We may endow  $2^X$ with
the {\it Hausdorff metric} $\rho_H$ defined by
\[\rho_H(A,B)=\max\Bigl\{\max_{x\in A}\min_{y\in B} \rho(x,y),\ \max_{y\in B}\min_{x\in A}
\rho(x,y)\Bigr\}\ \text{for}\ A,B\in 2^X.\]
The following family
\[
\{\langle U_1, \dotsc, U_n\rangle\colon U_1, \dotsc, U_n \textrm{ are non-empty open subsets of } X,\ n\in \mathbb{N}\}
\]
forms a basis for a topology of $2^X$ called the {\it Vietoris topology}, where
\[
\langle S_1,\dotsc, S_n\rangle:= \Bigl\{A\in 2^X\colon A\subset
\bigcup_{i=1}^n S_i \textrm{ and } A\cap S_i\neq \emptyset \textrm{ for each }i=1,\dotsc, n\Bigr\}
\]
is defined for arbitrary non-empty subsets $S_1,\dotsc, S_n\subset X$.
It is not hard to see that the Hausdorff topology (the topology induced by the Hausdorff metric $\rho_H$)
and the Vietoris topology for $2^X$ coincide. For more details on hyperspaces we refer the reader to~\cite{N78}.

A subset $Q$ of $2^X$ is called {\it hereditary} if $2^A\subset Q$ for every set $A\in Q$.
We will need the following lemma,
which is a consequence of the Kuratowski-Mycielski Theorem (see~\cite[Theorem 5.10]{A04}).

\begin{lem}\label{lem:resudial-relation}
Suppose that $X$ is a perfect compact space.
If a hereditary subset $Q$ of $2^X$ is residual then
there exists a countable Cantor sets $C_1\subset C_2\subset \dotsb$ of $X$
such that $C_i\in Q$ for every $i\geq 1$ and $C=\bigcup_{i=1}^\infty C_i$ is dense in $X$.
\end{lem}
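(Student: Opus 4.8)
The plan is to deduce the statement from the Kuratowski–Mycielski theorem \cite[Theorem 5.10]{A04}, so the real work is to recast the hereditary residual set $Q$ into the shape that theorem accepts and then to read off the nested Cantor sets from its conclusion. First I would record the ambient features forced by $X$ being perfect and compact: each finite power $X^n$ and the hyperspace $2^X$ are again perfect compact metric spaces, hence Baire, and the finite subsets of $X$ form a dense subset of $2^X$. Since $Q$ is residual I fix once and for all a decreasing sequence of dense open sets $U_1\supseteq U_2\supseteq\cdots$ in $2^X$ with $G:=\bigcap_{k=1}^\infty U_k\subseteq Q$.

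Next I would isolate the two features I intend to exploit, one geometric and one combinatorial. The geometric one is a limit property of Cantor schemes: if $C$ is the Cantor set produced by a scheme of nonempty open sets $\{V_s\}$ (with $\overline{V_{s0}},\overline{V_{s1}}\subseteq V_s$ disjoint and diameters shrinking to $0$), and if at some finite level $k$ the basic Vietoris neighbourhood $\langle V_s\colon |s|=k\rangle$ already lies inside $U_k$, then automatically $C\in U_k$, because the piece $C\cap\overline{V_s}$ is nonempty and contained in $V_s$ for each $s$, so $C\subseteq\bigcup_{|s|=k}V_s$ and $C$ meets every $V_s$. Arranging this at every level forces $C\in G\subseteq Q$ \emph{without} any appeal to heredity. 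The combinatorial feature is that heredity makes $Q$ \emph{coherent}: a closed set lies in $Q$ precisely when all of its closed (in particular all of its finite) subsets do, so $Q$-membership can be tested on a dense family of finite subsets and is preserved under passing to sub-Cantor-sets. This coherence is exactly the hypothesis under which the Kuratowski–Mycielski machine runs, and it is also what allows the output family to be taken increasing, since every Cantor subset of a member of $Q$ is again a member of $Q$.

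With these in hand I would run a single fusion (the content of \cite[Theorem 5.10]{A04}) that produces one dense Mycielski set $K=\bigcup_{i=1}^\infty C_i$ presented as an increasing union of Cantor sets, interleaving three families of requirements along the countably many levels of the scheme: (i) to place each $C_i$ in $Q$, arrange the level-$k$ neighbourhood inside $U_k$ as above, so that $C_i\in G\subseteq Q$; (ii) to make the union dense, reserve at successive stages a branch approaching the $m$-th member of a fixed countable base of $X$, so that $\bigcup_i C_i$ meets every basic open set; (iii) to keep the tower nested, only refine existing nodes and graft new branches, never deleting old ones, so that $C_i\subseteq C_{i+1}$. Reading $\{C_i\}$ off the resulting scheme then gives the asserted sequence.

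I expect the genuine difficulty to be the verification that feeds Kuratowski–Mycielski, namely transferring residuality from the hyperspace to the finite powers. Writing $\phi_n\colon X^n\to 2^X$ for the continuous map $(x_1,\dots,x_n)\mapsto\{x_1,\dots,x_n\}$, one wants a residual (dense $G_\delta$) condition in each $X^n$. Here heredity pays off cleanly for \emph{density}: given a box $O_1\times\dots\times O_n$ with pairwise disjoint factors, the Vietoris-open set $\langle O_1,\dots,O_n\rangle$ meets $G$, hence (finite sets being dense) contains a finite member $F\in G\subseteq Q$; pruning $F$ to one point per $O_i$ yields a tuple in the box whose underlying set is a closed subset of $F$, still in $Q$ by heredity, so $\phi_n^{-1}(Q)$ is dense. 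The delicate point is that the individual open sets $U_k$ are \emph{not} themselves hereditary, so upgrading this density to the $G_\delta$/residual form the theorem consumes, and simultaneously threading the nesting of (iii) against the density of (ii) inside one construction, is the crux; once that bookkeeping is in place the lemma follows by specialising \cite[Theorem 5.10]{A04} to the hyperspace.
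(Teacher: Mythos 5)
The paper does not actually prove this lemma: it is quoted verbatim as a consequence of Akin's Kuratowski--Mycielski Theorem \cite[Theorem 5.10]{A04}, which is already formulated for hereditary collections in the hyperspace, so no transfer to finite powers is needed there. Your write-up, by contrast, has concrete problems. First, the step ``$\langle O_1,\dotsc,O_n\rangle$ meets $G$, hence (finite sets being dense) contains a finite member $F\in G$'' is false: a dense $G_\delta$ subset of $2^X$ need not contain \emph{any} finite set (for perfect $X$ the collection of infinite closed sets is itself a dense $G_\delta$ in $2^X$, and $G$ could lie inside it). The conclusion you want, density of $\phi_n^{-1}(Q)$ in $X^n$, is salvageable---take an arbitrary, possibly infinite, $F\in G\cap\langle O_1,\dotsc,O_n\rangle$, pick one point of $F\cap O_i$ for each $i$, and invoke heredity of $Q$ (not of $G$)---but as written the step is wrong.

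Second, and more seriously, the ``crux'' you defer is not bookkeeping, and the route you choose cannot deliver it. Even if you proved $\phi_n^{-1}(Q)$ residual in $X^n$ for every $n$, the finite-power Kuratowski--Mycielski theorem only yields Cantor sets all of whose \emph{finite} subsets lie in $Q$; it cannot certify that the infinite sets $C_i$ themselves lie in $Q$. Concretely, $Q=\{\text{finite sets}\}\cup\bigcup_{B\in G}2^B$ is hereditary and residual, yet $\phi_n^{-1}(Q)=X^n$ for every $n$, so the finite-power relations retain no information about which Cantor sets belong to $Q$. Meanwhile the direct hyperspace fusion you sketch (level-$k$ Vietoris hulls inside $U_k$) founders exactly where you flag it: passing from $C_i$ to $C_{i+1}$ grafts branches into new territory, and since the $U_k$ are not hereditary there is no guarantee the enlarged configuration can be kept inside $U_k$. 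For instance, with $X=[0,1]$ the set $U=\{E\colon 0\notin E\}\cup\{E\colon E\subseteq[0,1/2)\}$ is open and dense, $\{0\}\in U$, yet no set of the form $\{0\}\cup D$ with $\emptyset\neq D\subseteq[1/2,1]$ lies in $U$; so a committed configuration may admit \emph{no} admissible extension into a prescribed region. Overcoming this requires a genuinely different use of the heredity of $Q$---for example certifying $C_i\in Q$ by exhibiting it as a closed subset of auxiliary Cantor sets $B_j\in G$ constructed jointly for all later stages $j\geq i$, or first replacing the $U_k$ by hereditary dense open sets---and that is precisely the substance of Akin's theorem: your proposal assumes it rather than proves it.
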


\subsection{Topological dynamics}
By a (topological) dynamical system, we mean a pair $(X,T)$
consisting of a compact metric space $(X,\rho)$ and a continuous map $T\colon X\to X$.
If $K \subset X$ is a non-empty closed subset
satisfying $T(K) \subset K$, then we say that $(K,T)$ is a {\it subsystem} of $(X,T)$ and
$(X,T)$ is {\it minimal} if it has no proper subsystems.
A point $x\in X$ is {\it minimal} if it is contained in some minimal subsystem of $(X,T)$.
A point $x\in X$ is a {\it fixed point} if $Tx=x$, or a {\it periodic point}
if there exists $n\in\N$ such that $T^n x=x$.

For two subsets $U$ and $V$ of $X$, we define the {\it hitting time set of $U$ and $V$} by
\[
N(U,V)=\{n\in\N\colon U\cap T^{-n}V\neq\emptyset\}.
\]
A dynamical system $(X,T)$ is {\it transitive} if for every two non-empty open subsets $U$ and $V$ of $X$,
the hitting time set $N(U,V)$ is not empty;
{\it weakly mixing} if the product system $(X\times X,T\times T)$ is transitive.
We have the following well known characterizations of weak mixing, see \cite[Theorem~1.11]{G03} for example.
\begin{prop} \label{prop:weak-mixing}
Let $(X,T)$ be a dynamical system. Then the following conditions are equivalent
\begin{enumerate}
  \item  $(X,T)$ is weakly mixing;
  \item the collection of hitting time sets,
  \[\{N(U,V)\colon U, V\text{ are non-empty open subsets of }X\},\]
  is a filter base;
  \item for non-empty open subsets $U$ and $V$ of $X$,
   the hitting time set $N(U,V)$ is thick;
\item for every $m\geq 2$, $(X^m,T^{(m)})$ is transitive, where $T^{(m)}=T\times T\times \dotsb\times T$ ($m$-times).
\end{enumerate}
\end{prop}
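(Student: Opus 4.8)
The plan is to prove the equivalences as a cycle, $(4)\Rightarrow(1)\Rightarrow(2)\Rightarrow(4)$ together with $(4)\Rightarrow(3)\Rightarrow(1)$, so that the real content is concentrated in one ``synchronisation'' step. The implication $(4)\Rightarrow(1)$ is immediate by taking $m=2$, since $(X^2,T^{(2)})=(X\times X,T\times T)$. Throughout I would exploit the elementary identity $\bigcap_{j=1}^{k}N(U_j,V_j)=N_{X^k}(U_1\times\dots\times U_k,\,V_1\times\dots\times V_k)$, which turns transitivity of a product system into nonemptiness of a finite intersection of hitting time sets, and the fact that a transitive system has $T$ surjective: if $T(X)\neq X$ then $W=X\setminus T(X)$ is nonempty open (as $T(X)$ is compact, hence closed), while $T^{n}x\in T(X)$ for every $n\geq 1$ forces $N(W,W)=\emptyset$, contradicting transitivity. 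Consequently $T^{-i}V$ is a nonempty open set for every $i\geq 0$ and every nonempty open $V$, a fact I will need in the passage to $(3)$.

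For $(1)\Rightarrow(2)$, given two hitting time sets $N(U_1,V_1)$ and $N(U_2,V_2)$, weak mixing applied to $X\times X$ produces a time $m$ with $m\in N(U_1,U_2)\cap N(V_1,V_2)$, since this intersection is exactly $N_{X^2}(U_1\times V_1,\,U_2\times V_2)$. Setting $U:=U_1\cap T^{-m}U_2$ and $V:=V_1\cap T^{-m}V_2$ (both nonempty open by the choice of $m$), I would check $N(U,V)\subseteq N(U_1,V_1)\cap N(U_2,V_2)$: if $x\in U$ and $T^nx\in V$, then $x\in U_1$ with $T^nx\in V_1$, while $y:=T^mx\in U_2$ satisfies $T^ny=T^m(T^nx)\in V_2$. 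As $N(U,V)\neq\emptyset$ by transitivity, the family $\{N(U,V)\}$ is a filter base, giving $(2)$. Iterating the filter base property over a finite list yields a nonempty $N(U,V)$ inside $\bigcap_{j=1}^{k}N(U_j,V_j)$, so the latter is nonempty, which by the product identity is transitivity of $(X^k,T^{(k)})$, i.e. $(4)$. For $(4)\Rightarrow(3)$, fix nonempty open $U,V$ and $N\geq 1$; using surjectivity each $T^{-i}V$ is nonempty open, and $\bigcap_{i=0}^{N}N(U,T^{-i}V)=N_{X^{N+1}}(U^{N+1},\,V\times T^{-1}V\times\dots\times T^{-N}V)$ is nonempty by $(4)$. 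Any $n$ in this intersection satisfies $n+i\in N(U,V)$ for all $0\leq i\leq N$, so $\{n,n+1,\dots,n+N\}\subseteq N(U,V)$ and $N(U,V)$ is thick.

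The step I expect to be the main obstacle is $(3)\Rightarrow(1)$, namely recovering full weak mixing from thickness of each \emph{individual} hitting time set. The naive attempts fail: taking a long run inside the thick set $N(U_1,V_1)$ need not meet $N(U_2,V_2)$, because two thick sets can be disjoint, and rerunning the synchronisation argument above would require a common time $m\in N(U_1,U_2)\cap N(V_1,V_2)$, which thickness does not provide—indeed every such shift argument merely reduces one instance of weak mixing to another of equal difficulty, so a straightforward induction that intersects two thick sets across product levels does not close. The correct route is the Furstenberg intersection lemma, which shows that the hypothesis ``$N(U,V)$ is thick for all nonempty open $U,V$'' is already strong enough to force \emph{all} finite intersections of hitting time sets to be thick, and hence nonempty, which is precisely $(4)$ and therefore $(1)$. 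This is the only place where the full strength of thickness, rather than mere nonemptiness, is genuinely used, and I would treat it as the substantive classical input of the proposition (as recorded in \cite[Theorem~1.11]{G03}).
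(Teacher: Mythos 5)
Your implications $(4)\Rightarrow(1)$, $(1)\Rightarrow(2)$, $(2)\Rightarrow(4)$ and $(4)\Rightarrow(3)$ are correct and complete: the product identity $\bigcap_{j}N(U_j,V_j)=N_{X^k}(\prod_j U_j,\prod_j V_j)$, the synchronisation step with $U=U_1\cap T^{-m}U_2$, $V=V_1\cap T^{-m}V_2$, and the surjectivity argument all check out. (For the record, the paper offers no proof of this proposition at all; it simply cites \cite[Theorem~1.11]{G03}.) The genuine gap is your handling of $(3)\Rightarrow(1)$. The Furstenberg intersection Lemma does not say what you attribute to it: its \emph{hypothesis} is weak mixing, and its conclusion is the filter base property / transitivity of all finite products --- that is, it is precisely the chain $(1)\Rightarrow(2)\Rightarrow(4)$ that you have already reproved yourself, and this is exactly how the present paper invokes it in its introduction. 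Applying it in the step $(3)\Rightarrow(1)$ is therefore circular: you would need weak mixing to invoke it, and weak mixing is what you are trying to establish. Appealing instead to \cite[Theorem~1.11]{G03} for this step is appealing to the very proposition under proof. As written, your cycle does not close, and condition $(3)$ is never shown to imply the others.

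Moreover, your stated reason for abandoning a direct argument --- that ``every such shift argument merely reduces one instance of weak mixing to another of equal difficulty'' --- is incorrect: the shift argument does close once you allow two \emph{independent} shifts and let thickness absorb their difference. Concretely, given non-empty open sets $U_1,U_2,V_1,V_2$, pick $b\in N(U_1,U_2)$ and $a\in N(V_1,V_2)$ (non-empty by $(3)$, since thick sets are non-empty), and set $U=U_1\cap T^{-b}U_2\neq\emptyset$, $V=V_1\cap T^{-a}V_2\neq\emptyset$. Exactly as in your $(1)\Rightarrow(2)$ computation, every $m\in N(U,V)$ satisfies $m\in N(U_1,V_1)$, and satisfies $m+a-b\in N(U_2,V_2)$ whenever $m+a-b\geq 1$: indeed, if $x\in U$ and $T^mx\in V$, then $T^bx\in U_2$ and $T^{m+a-b}(T^bx)=T^a(T^mx)\in V_2$. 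Hence $N(U_1,V_1)\supseteq N(U,V)$ and $N(U_2,V_2)\supseteq (N(U,V)+a-b)\cap\N$. Now use thickness of $N(U,V)$ to find a block $\{n,n+1,\dotsc,n+L\}\subset N(U,V)$ with $n>|a-b|$ and $L\geq|a-b|$; then the intervals $\{n,\dotsc,n+L\}\subset N(U_1,V_1)$ and $\{n+a-b,\dotsc,n+L+a-b\}\subset N(U_2,V_2)$ overlap, and any integer in the overlap lies in $N(U_1,V_1)\cap N(U_2,V_2)$. Thus $(X\times X,T\times T)$ is transitive, which is $(1)$. This is indeed the one place where the full strength of thickness is used, as you anticipated --- but it must be carried out rather than outsourced to a lemma whose hypothesis is the conclusion you want.
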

For a point $x\in X$ and a subset $U$ of $X$, we define the {\it entering time set of $x$ into $U$} by
\[
N(x,U)=\{n\in\N\colon T^nx\in U\}.
\]
A point $x\in X$ is {\it transitive} if the orbit of $x$, $\{T^nx\colon n\in\Z\}$, is dense in $X$.
Note that if $(X,T)$ is transitive then either $X$ is a periodic orbit or perfect.
If $X$ is perfect or $T$ is surjective, then $(X,T)$ is transitive if and only if
there exists a transitive point in $X$ if and only if
the collection of transitive points of $(X,T)$ forms a dense $G_\delta$ subset of $X$.

\section{\texorpdfstring{$\Delta$}{Delta}-transitivity and \texorpdfstring{$\Delta$}{Delta}-weakly mixing sets}
In this section, inspired by results in Blanchard and Huang \cite{BH08} and Moothathu \cite{M10},
we will define and study a local version of $\Delta$-transitive
and $\Delta$-weakly mixing properties
which are called $\Delta$-transitive and $\Delta$-weakly mixing subsets, respectively.

\subsection{\texorpdfstring{$\Delta$}{Delta}-transitivity}
Recall that a dynamical system $(X,T)$ is called {\it $\Delta$-transitive} if for every $d\geq 2$ there
is a residual subset $X_0$ of $X$ such that for every $x\in X_0$
the diagonal $d$-tuple $(x,x,\dotsc,x)$ is transitive under the action
$T\times T^2\times \dotsb\times T^d$, that is the orbit $\{(T^nx,T^{2n}x,\dotsc,T^{dn}x)\colon n\in \Z\}$ is dense in $X^{d}$.
Recently, the authors in \cite{CLL14} characterized  $\Delta$-transitive systems
by the entering time sets of transitive points into open sets.
To get a better understanding of $\Delta$-transitivity, we generalize the hitting time sets between open subsets.
For every  $d\geq2$ and subsets $U_1,U_2,\dotsc,U_d$ of $X$, we define the {\it hitting time set}
of $U_1,U_2,\dotsc,U_d$ by
\begin{align*}
N(U_1,U_2,\dotsc,U_d)=\biggl\{n\in\N\colon \bigcap\limits_{i=1}^dT^{-(i-1)n}U_i\neq\emptyset\biggr\}.
\end{align*}
We have the following characterization of $\Delta$-transitive systems.

\begin{prop} \label{prop:Delta-transitive}
Let $(X, T)$ be a dynamical system. Then the following conditions are equivalent:
\begin{enumerate}
\item $(X,T)$ is $\Delta$-transitive;
\item $T$ is surjective and for every $d\geq 1$, there exists a point $x\in X$ such that
the diagonal $d$-tuple $(x,x,\dotsc,x)$ is transitive under the action $T\times T^2\times \dotsb\times T^d$;
\item for every $d\geq2$ and non-empty open subsets $U_1,U_2,\dotsc U_d$ of $X$,
$N(U_1,U_2,\dotsc,U_d)\neq\emptyset$.
\end{enumerate}
\end{prop}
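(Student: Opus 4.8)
The plan is to work throughout with the single auxiliary system $\sigma_d:=T\times T^2\times\dotsb\times T^d$ on $X^d$ and the diagonal embedding $\Delta\colon X\to X^d$, $\Delta x=(x,\dotsc,x)$, and to reformulate condition (1) as the statement that for each $d\ge2$ the set $R_d:=\{x\in X\colon\orb_{\sigma_d}(\Delta x)=\{\sigma_d^n(\Delta x)\colon n\in\Z\}\text{ is dense in }X^d\}$ is residual. The first observation I would record is that $R_d$ is automatically a $G_\delta$ set: choosing a countable base and writing $R_d=\bigcap\bigl(\bigcup_{n\ge1}(T^{-n}B_1\cap T^{-2n}B_2\cap\dotsb\cap T^{-dn}B_d)\bigr)$ over all boxes $B_1\times\dotsb\times B_d$, each inner union is open. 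Thus for (1) it will always suffice to prove that each of these generating open sets is dense, reducing everything to a Baire-category argument; note also the monotonicity $R_d\subset R_{d-1}$ obtained by projecting onto the first $d-1$ coordinates.

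For the equivalence (1)$\Leftrightarrow$(3) the key is a bookkeeping identity relating the two index conventions. For non-empty open $U,V_1,\dotsc,V_d$ one has $U\cap T^{-n}V_1\cap\dotsb\cap T^{-dn}V_d=\bigcap_{i=1}^{d+1}T^{-(i-1)n}W_i$ with $W_1=U$ and $W_{i+1}=V_i$, so the density at $U$ of the generating set of $R_d$ attached to the box $V_1\times\dotsb\times V_d$ is exactly the assertion $N(U,V_1,\dotsc,V_d)\ne\emptyset$. Hence applying (3) to the $(d+1)$-tuples shows every generating set of $R_d$ is dense open, so $R_d$ is a dense $G_\delta$ and (1) follows by Baire. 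Conversely, given (1), to produce $n\in N(U_1,\dotsc,U_d)$ I would pick $x\in R_{d-1}\cap U_1$ (possible since $R_{d-1}$ is residual, hence dense), and use the $\sigma_{d-1}$-transitivity of $\Delta x$ to find $n$ with $T^{jn}x\in U_{j+1}$ for $1\le j\le d-1$; together with $x\in U_1$ this gives the point witnessing $N(U_1,\dotsc,U_d)\ne\emptyset$, the case $d=2$ being ordinary transitivity.

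The implication (1)$\Rightarrow$(2) is routine: residual sets are non-empty, so the required diagonal transitive points exist for $d\ge2$, projection to the first coordinate supplies the $d=1$ case, and surjectivity of $T$ follows since a transitive system is either a periodic orbit or perfect, the periodic case being trivial under $\Delta$-transitivity, so in all cases $T(X)=X$.

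The hard part will be (2)$\Rightarrow$(1), where I start from a single diagonal transitive point and must upgrade it to a residual set. The crucial observation, which I expect to be the main obstacle to discover, is that the diagonal action $T^{(d)}=T\times\dotsb\times T$ commutes with $\sigma_d$ and satisfies $T^{(d)}\circ\Delta=\Delta\circ T$. Granting this, if $\Delta x$ is $\sigma_d$-transitive then, because $T^{(d)}$ is surjective (as $T$ is) and maps the $\sigma_d$-orbit of $\Delta x$ onto the $\sigma_d$-orbit of $T^{(d)}(\Delta x)=\Delta(Tx)$, the point $\Delta(Tx)$ is again $\sigma_d$-transitive; by induction every $\Delta(T^kx)$ is $\sigma_d$-transitive. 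Since $\{T^kx\colon k\ge0\}$ is dense in $X$ (the first coordinate of a $\sigma_d$-transitive point already has dense orbit), the set $R_d$ contains a dense set; being a $G_\delta$ by the first paragraph, it is therefore a dense $G_\delta$, hence residual, which is precisely (1). The only genuinely delicate verifications are the commutation $\sigma_d\circ T^{(d)}=T^{(d)}\circ\sigma_d$ and the surjectivity-driven persistence of transitivity under $T^{(d)}$; everything else is standard Baire-category bookkeeping.
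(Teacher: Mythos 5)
Your proposal is correct, and its core Baire-category step coincides with the paper's: your $(3)\Rightarrow(1)$ argument is exactly theirs (each set $\bigcup_{n\in\N}\bigcap_{i=1}^{d}T^{-in}U_{k_i}$ is open, and dense because (3) applied to $(d+1)$-tuples lets an arbitrary open set occupy the zero-exponent slot). Where you genuinely diverge is in how condition (2) is tied in. The paper closes the cycle $(1)\Rightarrow(2)\Rightarrow(3)\Rightarrow(1)$, and its $(2)\Rightarrow(3)$ step is a pullback trick: aim the dense diagonal orbit at $T^{-1}U_1\times\dotsb\times T^{-1}U_d$ (non-empty and open by surjectivity), so that $T^{n+1}x\in\bigcap_{i=1}^{d}T^{-(i-1)n}U_i$, i.e.\ $n+1\in N(U_1,\dotsc,U_d)$; this single move both converts the orbit exponents $(n,2n,\dotsc,dn)$ into the hitting-time exponents $(0,n,\dotsc,(d-1)n)$ and guarantees the hitting time is positive. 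You instead prove $(2)\Rightarrow(1)$ directly: from the equivariance $\sigma_d\circ T^{(d)}=T^{(d)}\circ\sigma_d$ and surjectivity of $T^{(d)}$ you get $\overline{\orb_{\sigma_d}(\Delta Tx)}\supset T^{(d)}\bigl(\overline{\orb_{\sigma_d}(\Delta x)}\bigr)=X^d$, so the forward $T$-orbit of one diagonal transitive point, which is dense in $X$, lies in $R_d$, and residuality follows from the $G_\delta$ form of $R_d$. That argument is sound and arguably more conceptual: it shows the diagonal transitive points are pushed forward by any equivariant surjection, which is extra information the paper's route does not record. What the paper's route buys is cleaner bookkeeping at one point you gloss over: dense orbits are indexed from $n=0$, while $N(U_1,\dotsc,U_d)$ and your $G_\delta$ formula for $R_d$ require $n\geq 1$, so in your $(1)\Rightarrow(3)$ step (and in identifying $R_d$ with that formula) you must rule out that a box is visited only at time $0$; this is easily fixed since transitivity already forces $T$, hence each $\sigma_d$, to be surjective, so the orbit from time $1$ onward is still dense, but it deserves an explicit line, whereas the paper's $n+1$ device never encounters it.
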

\begin{proof}
(1) $\Rightarrow$ (2). If $(X,T)$ is $\Delta$-transitive, then it is clear transitive and thus $T$ is surjective.

(2) $\Rightarrow$ (3).
Fix $d\geq 2$. There exists a point $x\in X$
such that the diagonal $d$-tuple $(x,x,\dotsc,x)$ is transitive under the action
$T\times T^2\times \dotsb\times T^d$.
Fix non-empty open subsets $U_1,U_2,\dotsc,U_d$ of $X$.
Since $T$ is surjective,
$T^{-1}U_1\times T^{-1}U_2\times \dotsb \times T^{-1}U_d$ is a non-empty open subset of $X^{d}$.
There exists $n\in \Z$ such that
\[(T^nx,T^{2n}x,\dotsc,T^{dn}x )\in T^{-1}U_1\times T^{-1}U_2\times \dotsb \times T^{-1}U_d.\]
Then
\[T^{n+1}x\in \bigcap_{i=1}^d T^{-(i-1)n}U_i,\]
that is $n+1\in N(U_1,U_2,\dotsc,U_d)$.

(3) $\Rightarrow$ (1). Fix a countable topology base $\{U_k\colon k\in\N\}$ of $X$.
For every $d\geq 2$, put
\[ X_d=\bigcap_{(k_1,\dotsc,k_d)\in\N^d} \bigcup_{n\in \N} \bigcap_{i=1}^d T^{-in}U_{k_i}.\]
For every non-empty open subset $V$ of $X$, one has $N(V, U_{k_1},U_{k_2},\dotsc,U_{k_d})\neq\emptyset$.
That is, there exists $n\in \N$ such that $V\cap \bigcap_{i=1}^d T^{-in}U_{k_i}\neq\emptyset$.
So $\bigcup_{n\in \N} \bigcap_{i=1}^d T^{-in}U_{k_i}$ is open and dense in $X$.
And then by the Baire Category Theorem, $X_d$ is a dense $G_\delta$ subset of $X$.
Let $X_0=\bigcap_{d=2}^\infty X_d$. Then $X_0$ is also a dense $G_\delta$ subset of $X$.
By the construction, it is easy to check that  for every $x\in X_0$ and  $d\geq 2$
the diagonal $d$-tuple $x^{(d)}=:(x,x,\dotsc,x)$ is transitive under the action
$T\times T^2\times\dotsb\times T^d$.
Therefore $(X,T)$ is $\Delta$-transitive.
\end{proof}

Recall that a dynamical system $(X,T)$ is {\it weakly mixing}
if the product system $(X\times X,T\times T)$ is transitive.
Intuitively, one can define the concept of $\Delta$-weak mixing as follows:
a dynamical system $(X,T)$ is {\it $\Delta$-weakly mixing}
if the product system $(X\times X,T\times T)$ is $\Delta$-transitive.
But the following result reveals that this kind  $\Delta$-weak mixing is in fact equivalent to $\Delta$-transitivity.
Moreover, $\Delta$-transitivity shares many similar properties of weak mixing
(comparing Proposition~\ref{prop:weak-mixing}).

\begin{prop} \label{prop:Delta-transitive-weak-mixing}
 Let $(X,T)$ be a dynamical system. Then the following conditions are equivalent:
 \begin{enumerate}
 \item $(X,T)$ is $\Delta$-transitive;
 \item the collection of hitting time sets,
 $\{N(U_1,U_2,\dotsc, U_d)\colon d\geq2 $ and $U_1,U_2,\dotsc,U_d$ are non-empty open subsets of $X\}$,
 is a filter base;
 \item for every $d\geq 2$ and non-empty open subsets $U_1,U_2,\dotsc,U_d$ of $X$,
 $N(U_1,U_2,\dotsc,U_d)$ is thick;
 \item for every $m\geq 2, (X^m,T^{(m)})$ is $\Delta$-transitive.
 \end{enumerate}
 \end{prop}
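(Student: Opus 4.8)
The plan is to follow the template of the classical weak mixing characterization in Proposition~\ref{prop:weak-mixing}, using Proposition~\ref{prop:Delta-transitive} to trade $\Delta$-transitivity for the concrete condition that every hitting set $N(U_1,\dots,U_d)$ is non-empty. Concretely, I would prove the cycle $(1)\Rightarrow(2)\Rightarrow(3)\Rightarrow(1)$ together with $(1)\Leftrightarrow(4)$, so that at each step the only dynamical input is the non-emptiness of (possibly longer) hitting sets, which Proposition~\ref{prop:Delta-transitive} already supplies.

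The heart of the argument is a single elementary inclusion that plays the role of the Furstenberg intersection lemma in this setting: for non-empty open sets $U_1,\dots,U_d,V_1,\dots,V_e$,
\[ N(U_1,\dots,U_d,V_1,\dots,V_e)\subseteq N(U_1,\dots,U_d)\cap N(V_1,\dots,V_e). \]
To see this, suppose $n$ lies in the left-hand set, witnessed by a point $x$ with $T^{(i-1)n}x\in U_i$ for $1\le i\le d$ and $T^{(d+j-1)n}x\in V_j$ for $1\le j\le e$. Then $x$ itself witnesses $n\in N(U_1,\dots,U_d)$, while the shifted point $y=T^{dn}x$ satisfies $T^{(j-1)n}y=T^{(d+j-1)n}x\in V_j$, so $y$ witnesses $n\in N(V_1,\dots,V_e)$. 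Since the concatenated tuple is again a hitting set of finitely many non-empty open sets, Proposition~\ref{prop:Delta-transitive} guarantees the left-hand set is non-empty; as it is a member of the family and lies inside the intersection, this establishes the filter base property $(1)\Rightarrow(2)$.

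For $(2)\Rightarrow(3)$ I would use the self-similarity of hitting sets under shifting. Writing $U_i^{(j)}=T^{-(i-1)j}U_i$, a direct check shows $n+j\in N(U_1,\dots,U_d)$ if and only if $n\in N(U_1^{(j)},\dots,U_d^{(j)})$, hence $\{n,n+1,\dots,n+N\}\subseteq N(U_1,\dots,U_d)$ exactly when $n\in\bigcap_{j=0}^N N(U_1^{(j)},\dots,U_d^{(j)})$. The filter base property makes this finite intersection non-empty, giving arbitrarily long runs and hence thickness; here one first notes that $(2)$ forces every hitting set to be non-empty, so $(X,T)$ is $\Delta$-transitive and $T$ is surjective by Proposition~\ref{prop:Delta-transitive}, which is what keeps each $U_i^{(j)}$ a non-empty open set. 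The implication $(3)\Rightarrow(1)$ is immediate, since thick sets are non-empty and non-emptiness of all hitting sets is exactly condition $(3)$ of Proposition~\ref{prop:Delta-transitive}.

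Finally, for $(1)\Leftrightarrow(4)$ I would pass between $X$ and its powers through product boxes. For open boxes $\mathbf{U}_i=U_i^1\times\dots\times U_i^m$ one has $N_{X^m}(\mathbf{U}_1,\dots,\mathbf{U}_d)=\bigcap_{\ell=1}^m N(U_1^\ell,\dots,U_d^\ell)$, which the filter base property (already available from $(1)$) renders non-empty; since arbitrary open sets contain boxes and hitting sets are monotone, every hitting set of $(X^m,T^{(m)})$ is non-empty, so $(X^m,T^{(m)})$ is $\Delta$-transitive by Proposition~\ref{prop:Delta-transitive}. Conversely, projecting onto the first coordinate realizes $(X,T)$ as a factor of $(X^2,T^{(2)})$, and the inclusion $N(V_1,\dots,V_d)\supseteq N_{X^2}(V_1\times X,\dots,V_d\times X)$ propagates non-emptiness downward, giving $(4)\Rightarrow(1)$. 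The main obstacle is conceptual rather than technical: finding the concatenation inclusion above, which collapses the filter base property to the bare non-emptiness already provided by Proposition~\ref{prop:Delta-transitive}; once it is in hand, the remaining steps are routine bookkeeping, with the only points requiring care being the surjectivity of $T$ and the harmless padding of tuples by copies of $X$ (legitimate because $T^{-k}X=X$).
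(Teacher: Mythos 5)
Your proof is correct, and its key step takes a genuinely different route from the paper's. For $(1)\Rightarrow(2)$ the paper first re-proves Moothathu's result that $\Delta$-transitivity implies weak mixing (via $N(U_1,U_3,U_2,U_4)\neq\emptyset$), then uses weak mixing to produce non-empty sets $W_i=U_i\cap T^{-m}V_i$ and the inclusion $N(W_1,\dotsc,W_d)\subset N(U_1,\dotsc,U_d)\cap N(V_1,\dotsc,V_d)$. Your concatenation inclusion $N(U_1,\dotsc,U_d,V_1,\dotsc,V_e)\subset N(U_1,\dotsc,U_d)\cap N(V_1,\dotsc,V_e)$, witnessed by a point $x$ and its shift $T^{dn}x$, bypasses weak mixing entirely and reduces the filter base property directly to the bare non-emptiness supplied by Proposition~\ref{prop:Delta-transitive}; it also handles tuples of different lengths $d\neq e$ automatically, a case the paper's proof silently normalizes away (there both tuples are taken of the same length, which implicitly requires padding by copies of $X$). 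What the paper's route buys is the explicit intermediate fact that $\Delta$-transitivity implies weak mixing, cited from \cite{M10} and of independent interest, and an argument visibly parallel to the classical Furstenberg intersection lemma; what yours buys is economy and a cleaner filter-base step (note that weak mixing still drops out of your condition $(2)$ for free, by intersecting the members $N(U_1,U_3)$ and $N(U_2,U_4)$). Your remaining implications essentially coincide with the paper's: $(2)\Rightarrow(3)$ via the shifted tuples $T^{-(i-1)j}U_i$ --- where your explicit remark that $(2)$ forces $\Delta$-transitivity and hence surjectivity of $T$, so these sets are non-empty, fills a point the paper leaves implicit --- $(1)\Rightarrow(4)$ via product boxes and iterated filter-base intersections, and $(3)\Rightarrow(1)$, $(4)\Rightarrow(1)$ by passing non-emptiness of hitting sets down through Proposition~\ref{prop:Delta-transitive}.
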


 \begin{proof}
(4)$\Rightarrow$(1) is obvious, and (3) $\Rightarrow$ (1) follows from Proposition~\ref{prop:Delta-transitive}.

(1)$\Rightarrow$ (2)
We first show that if $X$ is $\Delta$-transitive, then $(X,T)$ is weakly mixing.
This was first proved in \cite[Proposition 3]{M10}, we provide a proof here for completeness.
For any non-empty open subsets $U_1,U_2,U_3,U_4$ of $X$, by Proposition~\ref{prop:Delta-transitive}
we have $N(U_1,U_3,U_2,U_4)\neq\emptyset$, that is there exists $n\in \N$ such that
\begin{align*}
 U_1\cap T^{-n}U_3\cap T^{-2n}U_2\cap T^{-3n}U_4\neq\emptyset.
\end{align*}
Thus, $ U_1\cap T^{-n}U_3\neq\emptyset$ and $U_2\cap T^{-n}U_4\neq\emptyset$,
which means that $(U_1\times U_2)\cap(T\times T)^{-n} (U_3\times U_4)\neq\emptyset$
and then $(X,T)$ is weakly mixing.

Now fix $d\geq2$ and non-empty open subsets $U_1,\dotsc, U_d$ and $V_1,\dotsc,V_d$ of $X$.
Since $(X,T)$ is weakly mixing, by Proposition~\ref{prop:weak-mixing} there exists $m\in\N$ such that
$W_i=U_i\cap T^{-m}V_i\neq\emptyset$ for $i=1,2,\dotsc,d$.
It is easy to check that
\begin{align*}
 N(W_1,W_2,\dotsc,W_d)\subset  N(U_1,U_2,\dotsc,U_d)\cap  N(V_1,V_2,\dotsc,V_d).
\end{align*}
This means that the collection is a filter base.

(2) $\Rightarrow$ (3)
Fix $d\geq 2$ and non-empty open subsets $U_1,U_2,\dotsc,U_d$ of $X$.
For every $N\in\N$, one has
\[\bigcap_{i=0}^N N(U_1,T^{-i}U_2,\dotsc,T^{-(d-1)i}U_d)\neq\emptyset.\]
Pick an integer $n$ in this intersection.
Note that $n\in N(U_1,T^{-i}U_2,\dotsc,T^{-(d-1)i}U_d)$ means
\[U_1\cap T^{-(n+i)}U_2\cap \dotsb\cap T^{-(d-1)(n+i)}U_d\neq\emptyset,\]
that is
\[n+i \in N(U_1,U_2,\dotsc,U_d).\]
So $\{n,n+1,\dotsc,n+N\}\subset N(U_1,\dotsc,U_d)$  and then it is thick.

(2) $\Rightarrow$ (4)
Fix $m\geq2$. For every $d\geq2$ and non-empty open subsets $V_1,V_2,\dotsc, V_d $ of $X^m$,
there exist non-empty open subsets $\{U_{i,j}\}_{1\leq i\leq m,1\leq j\leq d}$ of $X$ such that
$\prod\limits_{i=1}^m U_{i,j}\subset V_j$ for $j=1,2,\dotsc,d$.
Since the collection of hitting time sets is a filter base, we have
\begin{align*}
N(V_1,V_2,\dotsc,V_d)
&\supset N\Bigl(\prod\limits_{i=1}^m U_{i,1},\prod\limits_{i=1}^m U_{i,2},\dotsb, \prod\limits_{i=1}^m U_{i,d}\Bigr)\\
&\supset\bigcap\limits_{i=1}^m N(  U_{i,1},  U_{i,2},\dotsc,  U_{i,d}) \neq\emptyset.
 \end{align*}
It follows from Proposition \ref{prop:Delta-transitive} that $(X^m,T^{(m)})$ is $\Delta$-transitive.
\end{proof}

\subsection{\texorpdfstring{$\Delta$}{Delta}-transitive subsets}
The authors in \cite{OZ12} introduced a local version of transitivity---transitive subsets.
Let $(X,T)$ be a dynamical system and $A$ be a closed subset of $X$.
We say that $A$ is {\it transitive} if for every two non-empty open subsets $U$ and $V$ of $X$
intersecting $A$ (that is $U\cap A\neq\emptyset$ and $V\cap A\neq\emptyset$), $N(U\cap A,V)\neq\emptyset$.
Similarly, we can define the local version of $\Delta$-transitivity.
We say that a closed subset $A$ of $X$ is {\it $\Delta$-transitive} if for every $d\geq2$ and
non-empty open subsets $U_1,U_2,\dotsc,U_d$ of $X$ intersecting $A$,
\[
N(U_1\cap A, U_2,\dotsc,U_d)\neq\emptyset.
\]

Similarly to Proposition~\ref{prop:Delta-transitive},
we have the following characterization of $\Delta$-transitive subsets which shows that the definition given here is
equivalent to the one given in the introduction.
Since the proof is almost the same, we omit it here.

\begin{prop}\label{prop:Delta-transitive-sets},
Let $(X,T)$ be a dynamical system and $A$ be a closed subset of $X$.
Then $A$ is $\Delta$-transitive if and only if
for every $d\geq 2$ there exists a residual subset $A_0$ of $A$ such that for every $x\in A_0$, the orbit closure
of $d$-tuple $(x,\dotsc,x)$ under the action $T\times T^2\times \dotsb\times T^d$ contains $A^d$.
\end{prop}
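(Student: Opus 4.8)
The plan is to mirror the Baire-category proof of the equivalence (1)$\Leftrightarrow$(3) in Proposition~\ref{prop:Delta-transitive}, but carried out on the relatively open sets $U\cap A$ rather than on open sets of $X$. Two points need extra care: the distinguished (``first'') point witnessing a hitting-time set must be kept inside $A$, which is not assumed to be $T$-invariant, and the exponent $in$ appearing in the orbit $(T^nx,T^{2n}x,\dotsc,T^{dn}x)$ is shifted by one relative to the exponents $(i-1)n$ used in the definition of $N(\cdot)$. Throughout I use that $A$, being closed in the compact metric space $X$, is a Baire space, so that residual subsets of $A$ are dense in $A$.

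For the ``only if'' direction fix $d\geq 2$, fix a countable base $\{U_k\colon k\in\N\}$ of $X$, and for each tuple $(k_1,\dotsc,k_d)$ with $U_{k_i}\cap A\neq\emptyset$ for all $i$ set
\[
W_{(k_1,\dotsc,k_d)}=\bigcup_{n\in\N}\Bigl(A\cap\bigcap_{i=1}^{d}T^{-in}U_{k_i}\Bigr),
\]
which is relatively open in $A$ by continuity of $T$. To see it is dense in $A$, fix any basic $U_{k_0}$ meeting $A$ and apply the $\Delta$-transitivity of $A$ to the $d+1$ sets $U_{k_0},U_{k_1},\dotsc,U_{k_d}$: then $N(U_{k_0}\cap A,U_{k_1},\dotsc,U_{k_d})\neq\emptyset$ yields some $z\in U_{k_0}\cap A$ and $n\in\N$ with $T^{in}z\in U_{k_i}$ for every $i$, so $z\in U_{k_0}\cap W_{(k_1,\dotsc,k_d)}$. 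By the Baire Category Theorem the intersection $A_0$ of the countably many sets $W_{(k_1,\dotsc,k_d)}$ is a dense $G_\delta$ subset of $A$. Since the products $U_{k_1}\times\dotsb\times U_{k_d}$ with each factor meeting $A$ form a neighborhood base of the points of $A^d$ in $X^d$, for every $x\in A_0$ the orbit $\{(T^nx,\dotsc,T^{dn}x)\colon n\in\N\}$ meets each such product, and hence its closure contains $A^d$.

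For the ``if'' direction fix $d\geq 2$ and non-empty open sets $U_1,\dotsc,U_d$ of $X$ each meeting $A$, and let $A_0$ be the residual set provided by the hypothesis for this $d$. As $A_0$ is dense in $A$ and $U_1\cap A$ is non-empty and relatively open, choose $x\in A_0\cap U_1$, so that $x\in U_1\cap A$. Let $O$ be any fixed open set meeting $A$; then $U_2\times\dotsb\times U_d\times O$ is open and meets $A^d$, so, since the orbit closure of $(x,\dotsc,x)$ contains $A^d$, there is $n\in\N$ with $(T^nx,T^{2n}x,\dotsc,T^{dn}x)\in U_2\times\dotsb\times U_d\times O$. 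Reading the $(i-1)$-st coordinate gives $T^{(i-1)n}x\in U_i$ for $i=2,\dotsc,d$, which together with $x\in U_1\cap A$ shows $n\in N(U_1\cap A,U_2,\dotsc,U_d)$. Hence $A$ is $\Delta$-transitive.

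The main obstacle is exactly the bookkeeping flagged above: keeping the base point in $A$ forces one to anchor each argument at a relatively open set $U_1\cap A$ and to introduce one auxiliary set --- the $(d+1)$-st set $U_{k_0}$ in the density step, and the padding set $O$ in the converse --- which is what realigns the orbit exponents $in$ with the hitting-time exponents $(i-1)n$. Once this alignment is in place, both implications follow the template of Proposition~\ref{prop:Delta-transitive} essentially verbatim, with the Baire Category Theorem applied to $A$ rather than to $X$.
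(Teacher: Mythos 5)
Your overall strategy is exactly the one the paper intends: the paper omits the proof of this proposition, declaring it ``almost the same'' as that of Proposition~\ref{prop:Delta-transitive}, and your write-up is the correct relativization of that Baire-category argument --- the $(d+1)$-set trick $N(U_{k_0}\cap A,U_{k_1},\dotsc,U_{k_d})$ to get density of each $W_{(k_1,\dotsc,k_d)}$ in $A$, the Baire Category Theorem in the compact metric space $A$, and a padding coordinate in the converse to realign the exponents $n,2n,\dotsc,dn$ of the orbit with the exponents $0,n,\dotsc,(d-1)n$ in the hitting-time set. The ``only if'' direction is complete and correct as written.

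There is, however, one genuine gap in the ``if'' direction, at the step ``there is $n\in\mathbb{N}$ with $(T^nx,T^{2n}x,\dotsc,T^{dn}x)\in U_2\times\dotsb\times U_d\times O$''. Under the paper's conventions the orbit is indexed by $\mathbb{Z}_+$, i.e.\ it contains the time-$0$ point $(x,\dotsc,x)$; so from ``the orbit closure contains $A^d$'' you may only conclude that the orbit visits this open set at some $n\geq 0$. If the unique visit is at $n=0$, the argument yields nothing, since $N(U_1\cap A,U_2,\dotsc,U_d)$ consists of \emph{positive} integers by definition. This is not a vacuous worry: for $A=\{x\}$ with $x$ a non-recurrent point, the orbit-closure condition holds for every $d$ purely because of the time-$0$ point, yet $A$ is not $\Delta$-transitive --- so the equivalence genuinely hinges on producing a positive $n$, and for singletons it in fact requires reading ``orbit'' as the positive-time orbit (an edge case the paper itself glosses over). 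The repair when $A$ has at least two points is short: pick $a\in A\setminus\{x\}$ and take $O$ to be a small ball around $a$ with $x\notin O$ (equivalently, replace $U_2\times\dotsb\times U_d\times O$ by a smaller open set around a point of $A^d$ different from $(x,\dotsc,x)$ which excludes $(x,\dotsc,x)$); then $(x,\dotsc,x)$ does not lie in the open set, so any orbit point found in it must have $n\geq 1$, and your conclusion $n\in N(U_1\cap A,U_2,\dotsc,U_d)$ follows. You flagged the exponent bookkeeping as the main subtlety and handled it well; the positive-time requirement is the second, smaller subtlety that your choice of an \emph{arbitrary} padding set $O$ fails to address.
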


\begin{rem}
In fact in Proposition~\ref{prop:Delta-transitive-sets}, one can also show that
$A$ is $\Delta$-transitive if and only if  there exists a dense set $A_0$ of $A$ such that for every $d \in \N$ and every $x\in A_0$
the diagonal $d$-tuple $(x,x,\dotsc,x)$  has a dense orbit under the action
$T\times T^2\times\dotsb\times T^d$.

Unlike the case when $A=X$ in Proposition~\ref{prop:Delta-transitive},
we can not remove the density condition of $A_0$ when considering $\Delta$-transitive sets.
For example, in the full shift $(\{0,1\}^{\mathbb{Z}},\sigma)$,
pick a point $x\in \{0,1\}^{\mathbb{Z}}$ such that for every $d\geq 2$, the
the diagonal $d$-tuple $(x,x,\dotsc,x)$  has a dense orbit under the action
$\sigma\times \sigma^2\times\dotsb\times \sigma^d$.
But $\{x,0^\infty\}$ is not a $\Delta$-transitive set.
\end{rem}

\subsection{\texorpdfstring{$\Delta$}{Delta}-weakly mixing subsets}
The authors in \cite{BH08} introduced a local version of weak mixing---weakly mixing subsets.
Let $(X,T)$ be a dynamical system and $A$ be a closed subset of $X$ with at least two points.
We say that $A$ is {\it weakly mixing} if for every $n\geq 1$, $A^n$ is a
transitive subset in $(X^n,T^{(n)})$, that is  for every $n\geq 1$ and
non-empty open subsets $U_1,U_2,\dotsc,U_n$ and $V_1,V_2,\dotsc,V_n$ of $X$ intersecting $A$,
\[\bigcap_{i=1}^n N(U_i\cap A,V_i)\neq\emptyset.\]

Inspired by Proposition~\ref{prop:Delta-transitive-weak-mixing},
we introduce the concept of $\Delta$-weakly mixing sets as follows.
We say that $A$ is {\it $\Delta$-weakly mixing}
if for every $n\geq1$, $A^n$ is a $\Delta$-transitive subset in $(X^n, T^{(n)})$,
that is for every $n,d\geq2$ and non-empty open subsets $U_{i,j}$ of $X$ intersecting $A$
for $i=1,2,\dotsc,n$  and $j=1,2,\dotsc, d$,
\begin{align*}
\bigcap\limits_{i=1}^nN(U_{i,1}\cap A,U_{i,2},\dotsc, U_{i,d})\neq\emptyset.
\end{align*}
It is clear that every $\Delta$-weakly mixing set is weakly mixing.
By \cite[Proposition 4.2.]{BH08} every weakly mixing set is perfect,
then every $\Delta$-weakly mixing set is also perfect.
By Proposition~\ref{prop:Delta-transitive-weak-mixing},
we know that a dynamical system is $\Delta$-weakly mixing if and only if it is $\Delta$-transitive.
This is no longer true for $\Delta$-weakly mixing and $\Delta$-transitive sets, as the following
remark shows.
\begin{rem}\label{rem:two-points-delta-transitve-sets}
In the full shift $(\{0,1\}^{\mathbb{Z}},\sigma)$,
pick two distinct points $x_1,x_2\in \{0,1\}^{\mathbb{Z}}$ such that for every $d\geq 2$,
the diagonal $d$-tuple $(x_i,x_i,\dotsc,x_i)$  has a dense orbit under the action
$\sigma\times \sigma^2\times\dotsb\times \sigma^d$ for $i=1,2$.
Then $\{x_1,x_2\}$ is a $\Delta$-transitive set. As $\{x_1,x_2\}$ is not perfect, so
it is not a $\Delta$-weakly mixing set.
\end{rem}

Denote by $WM(X,T)$ the collection of all weakly mixing sets in $(X,T)$.
It is shown in \cite[Theorem 4.4]{BH08} that $WM(X,T)$ is a $G_\delta$ subset of  $2^X$.
Denote by $\Delta$-$WM(X,T)$ the collection of all $\Delta$-weakly mixing sets in $(X,T)$.
We will show that $\Delta$-$WM(X,T)$ is also a $G_\delta$ subset of $2^X$.
Our proof is motivated by the method in \cite[Theorem 4.4]{BH08},
but our proof is simpler since we avoid the discussion of the perfectness of subsets.

For a dynamical system $(X,T)$ and $m\in\N$,
we define a subset $A_m(X,T)$ of $2^X$ as follows:
a closed subset $E$ of $X$ with at least two points is in $A_m(X,T)$ if and only if
there are  $k\geq 2$ and open subsets $\{W_i\}_{i=1}^k$ of $X$ such that

\begin{enumerate}
  \item $\bigcup\limits_{i=1}^k W_i\supset E$;
  \item $\diam(W_i)<\frac{1}{m}$  for $i=1,2\dotsc,k$;
  \item $\bigcap\limits_{\alpha\in \{1,2,\dotsc,k\}^k}N(W_{\alpha(1)}
  \cap E,W_{\alpha(2)},\dotsc,W_{\alpha(k)})\neq\emptyset$.
\end{enumerate}


\begin{lem}
For every $m\geq 1$, $A_m(X,T)$ is open in $2^X$.
\end{lem}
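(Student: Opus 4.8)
The plan is to show that each point of $A_m(X,T)$ has a Vietoris-basic open neighbourhood contained in $A_m(X,T)$, by freezing the witnessing data. Fix $E\in A_m(X,T)$ and let $k\geq 2$ and open sets $W_1,\dotsc,W_k$ be the witnesses from the definition, so that $E\subset W:=\bigcup_{i=1}^k W_i$, $\diam(W_i)<\frac1m$, and there is a positive integer
\[
n_0\in\bigcap_{\alpha\in\{1,\dotsc,k\}^k} N(W_{\alpha(1)}\cap E,W_{\alpha(2)},\dotsc,W_{\alpha(k)}).
\]
The point is that although condition (3) refers to $E$ through the term $W_{\alpha(1)}\cap E$, once $n_0$ is fixed this becomes an \emph{open} incidence condition. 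For each $\alpha\in\{1,\dotsc,k\}^k$ I would set
\[
G_\alpha:=W_{\alpha(1)}\cap\bigcap_{j=1}^{k-1}T^{-jn_0}W_{\alpha(j+1)},
\]
which is open since $T$ is continuous, satisfies $G_\alpha\subset W_{\alpha(1)}\subset W$, and, unwinding the definition of the hitting time set, has the property that for \emph{any} closed $F\subset X$,
\[
n_0\in N(W_{\alpha(1)}\cap F,W_{\alpha(2)},\dotsc,W_{\alpha(k)})\iff G_\alpha\cap F\neq\emptyset.
\]
In particular $G_\alpha\cap E\neq\emptyset$ for every $\alpha$.

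Next I would build the neighbourhood. Enumerate $\{1,\dotsc,k\}^k=\{\alpha_1,\dotsc,\alpha_{k^k}\}$. Since $E$ has at least two points, choose distinct $p,q\in E$ and disjoint open sets $P\ni p$, $Q\ni q$; replacing them by $P\cap W$ and $Q\cap W$ (which still contain $p$ and $q$, are disjoint, and lie in $W$), consider the Vietoris-basic open set
\[
\mathcal V=\langle W,G_{\alpha_1},\dotsc,G_{\alpha_{k^k}},P\cap W,Q\cap W\rangle.
\]
Because every set listed is contained in $W$, membership $F\in\mathcal V$ forces $F\subset W$ together with $F\cap G_{\alpha}\neq\emptyset$ for all $\alpha$ and $F\cap(P\cap W)\neq\emptyset$, $F\cap(Q\cap W)\neq\emptyset$. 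One checks $E\in\mathcal V$ directly from the facts collected above.

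Finally I would verify $\mathcal V\subset A_m(X,T)$, reusing the same witnesses $k$ and $W_1,\dotsc,W_k$ for every $F\in\mathcal V$: condition (1) holds as $F\subset W$; condition (2) is unchanged since it does not mention $F$; condition (3) holds with the same $n_0$, because $F\cap G_\alpha\neq\emptyset$ for each $\alpha$ is exactly $n_0\in N(W_{\alpha(1)}\cap F,\dotsc,W_{\alpha(k)})$ by the equivalence above; and $F$ meets the two disjoint sets $P\cap W$ and $Q\cap W$, hence has at least two points, as required by the definition of $A_m(X,T)$. Thus $F\in A_m(X,T)$ and $A_m(X,T)$ is open.

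I would expect the main obstacle to be exactly the dependence of condition (3) on the set $E$ in the first coordinate: one cannot argue openness ``for free'' because the defining relation is not simply an open condition on $F$. The device that resolves it is to fix the single witnessing integer $n_0$ and translate each hitting-time membership into the incidence condition $G_\alpha\cap F\neq\emptyset$, which is manifestly a Vietoris subbasic condition. A minor secondary point, easily overlooked, is preserving the ``at least two points'' clause of the definition, which the auxiliary disjoint sets $P\cap W$ and $Q\cap W$ take care of.
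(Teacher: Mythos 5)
Your proof is correct and takes essentially the same route as the paper: both freeze the witnesses $k$, $W_1,\dotsc,W_k$ and the single hitting time $n_0$, observe that condition (3) then becomes the open incidence condition ``$F$ meets each $G_\alpha$'', and package this (together with $F\subset\bigcup_i W_i$ and the two-point requirement) into a neighbourhood of $E$ contained in $A_m(X,T)$. The only difference is presentational — you work with a Vietoris basic open set while the paper works with a Hausdorff-metric ball around points $x_\alpha\in G_\alpha\cap E$ — and these are equivalent since the two topologies on $2^X$ coincide.
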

\begin{proof}
Fix $E\in A_m(X,T)$. By the definition there exist $k\geq 2$ and open subsets $\{W_i\}_{i=1}^k$ of $X$
satisfying (1)-(3).
Then there exists $m\in\N$ such that for any $\alpha\in\{1,2,\dotsc,k\}^k$
there exists $x_\alpha\in X$ with
\begin{align*}
x_\alpha\in (W_{\alpha(1)}\cap E)\cap T^{-m} W_{\alpha(2)}\cap\dotsb\cap T^{-(k-1)m}W_{\alpha(k)}.
\end{align*}
Take sufficiently small $\epsilon_1>0$ such that for any $\alpha\in\{1,2,\dotsc,k\}^k$,
\begin{align*}
B(x_\alpha,\epsilon_1)\subset  W_{\alpha(1)} \cap T^{-m} W_{\alpha(2)}\cap\dotsb\cap T^{-(k-1)m}W_{\alpha(k)}.
\end{align*}
Since $E\subset\bigcup\limits_{i=1}^k W_i$,
there exists $\epsilon_2>0$ such that for any $F\in 2^X$ with $\rho_H(F,E)<\epsilon_2$,
 one has $F\subset\bigcup\limits_{i=1}^k W_i$ and $F$ also has at least two points.
Put $\epsilon=\min\{\epsilon_1,\epsilon_2\}$.
To show that $A_m(X,T)$ is an open subset of $2^X$,
it is sufficient to check that $\{G\in2^X \colon \rho_H(G,E)<\epsilon\}\subset A_m(X,T)$.
Let $G\in 2^X$ with $\rho_H(G,E)<\epsilon$.
Since $\epsilon<\epsilon_2$, $G\subset\bigcup\limits_{i=1}^kW_i$ and $G$ has at least two points.
For any $\alpha\in\{1,2,\dotsc,k\}^k,$ since $\epsilon\leq\epsilon_1$ and $x_\alpha\in E,$ one has
\begin{align*}
G\cap B(x_\alpha,\epsilon_1)\neq\emptyset,
\end{align*}
and then for any $\alpha\in\{1,2,\dotsc,k\}^k$
\begin{align*}
(W_{\alpha(1)}\cap G)\cap T^{-m} W_{\alpha(2)} \cap\dotsb\cap T^{-(k-1)m} W_{\alpha(k)}\supset B(x_\alpha,\epsilon_1)\cap G\neq\emptyset.
\end{align*}
Thus, $G\in A_m(X,T)$.
\end{proof}

\begin{lem}\label{lem:Delta-WM-G-delta}
Let $(X,T)$ be a dynamical system.
Then $\Delta$-$WM(X,T)=\bigcap_{m=1}^\infty A_m(X,T)$.
In particular,  $\Delta$-$WM(X,T)$ is a $G_\delta$-subset of $2^X$.
\end{lem}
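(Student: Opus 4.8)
The plan is to prove the set equality $\Delta\text{-}WM(X,T)=\bigcap_{m=1}^\infty A_m(X,T)$ by establishing the two inclusions separately; the ``in particular'' clause is then immediate, since the preceding lemma shows each $A_m(X,T)$ is open in $2^X$, so their countable intersection is a $G_\delta$ subset.

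For the inclusion $\Delta\text{-}WM(X,T)\subset\bigcap_m A_m(X,T)$, I would fix a $\Delta$-weakly mixing set $E$ and an integer $m\geq 1$, and cover $E$ by finitely many non-empty open sets $W_1,\dots,W_k$ with $\diam(W_i)<1/m$ and $W_i\cap E\neq\emptyset$, arranging $k\geq 2$ (if a single small set already contains $E$, split it, using that $E$ has at least two points). Then (1) and (2) in the definition of $A_m(X,T)$ hold by construction, and only (3) needs checking. The key point is a bookkeeping identification: enumerate the $k^k$ tuples $\alpha\in\{1,\dots,k\}^k$ as $\alpha^{(1)},\dots,\alpha^{(k^k)}$ and set $U_{i,j}=W_{\alpha^{(i)}(j)}$. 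Each $U_{i,j}$ is open and meets $E$, so applying the definition of $\Delta$-weak mixing with $n=k^k$ and $d=k$ (both $\geq 2$) yields $\bigcap_{i=1}^{k^k}N(U_{i,1}\cap E,U_{i,2},\dots,U_{i,k})\neq\emptyset$, which is exactly condition (3). Hence $E\in A_m(X,T)$, and as $m$ is arbitrary, $E\in\bigcap_m A_m(X,T)$.

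The converse inclusion carries the real difficulty. Fix $E\in\bigcap_m A_m(X,T)$; to verify $\Delta$-weak mixing, fix $n,d\geq 2$ and non-empty open $U_{i,j}$ ($1\leq i\leq n$, $1\leq j\leq d$) meeting $E$, choose $e_{i,j}\in U_{i,j}\cap E$ with $B(e_{i,j},r_{i,j})\subset U_{i,j}$, and set $r=\min_{i,j}r_{i,j}$. The main obstacle is that condition (3) only delivers transitivity of \emph{depth equal to the number of cover sets} $k$, while I need depth $d$, and membership in $A_m(X,T)$ does not let me prescribe $k$. I would overcome this by forcing the witnessing cover to be fine: if $E$ contains $d$ points pairwise at distance $\geq\delta_0>0$, then any cover of $E$ by sets of diameter $<\delta_0$ must use at least $d$ sets. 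So I pick $m$ with $1/m<\min(r,\delta_0)$, take a witnessing cover $W_1,\dots,W_k$ for $E\in A_m(X,T)$ (then automatically $k\geq d$), and let $q$ be a common element of the intersection in (3). For each $i,j$ choose $l(i,j)$ with $e_{i,j}\in W_{l(i,j)}$; since $\diam(W_{l(i,j)})<1/m<r\leq r_{i,j}$ we get $W_{l(i,j)}\subset B(e_{i,j},r_{i,j})\subset U_{i,j}$. Defining $\alpha_i\in\{1,\dots,k\}^k$ by $\alpha_i(j)=l(i,j)$ for $j\leq d$ (arbitrary for $d<j\leq k$), condition (3) at $q$ gives a non-empty depth-$k$ intersection; discarding the surplus constraints for slots $d+1,\dots,k$ and enlarging each $W_{l(i,j)}$ to $U_{i,j}$ produces $q\in N(U_{i,1}\cap E,U_{i,2},\dots,U_{i,d})$ for every $i$ at once. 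Thus $\bigcap_{i=1}^n N(U_{i,1}\cap E,\dots,U_{i,d})\neq\emptyset$, as required.

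The one remaining gap is that the fineness trick needs $E$ to contain $d$ separated points for every $d$, i.e. $E$ must be infinite, which I would dispatch first. If $E$ were finite with minimal inter-point distance $\delta$, then for $m$ with $1/m<\delta/2$ each witnessing set $W_l$ of $E\in A_m(X,T)$ meets $E$ in exactly one point; moreover, since the intersection in (3) runs over \emph{all} tuples $\alpha$, each $W_l$ must meet $E$ (otherwise the term with $\alpha(1)=l$ is empty). Fixing $\alpha(1)=l_0$ with unique point $y\in W_{l_0}\cap E$ and varying $\alpha(2)$ forces $T^q y\in W_l$ for every $l$; choosing two $W_l$ carrying distinct points of $E$ then contradicts $\delta>2/m$. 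Hence $\bigcap_m A_m(X,T)$ contains no finite set, so every such $E$ is infinite and the depth argument applies for all $d$. I expect the depth-matching step (forcing $k\geq d$ by taking $m$ large) together with this infinitude observation to be the only genuinely non-routine parts; the rest is unwinding the definitions of $N(\cdot)$ and of $A_m(X,T)$.
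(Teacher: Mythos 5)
Your proof is correct and follows essentially the same route as the paper's: the forward inclusion applies the definition of $\Delta$-weak mixing to the family of all $k$-tuples drawn from a fine cover of $E$, and the converse refines a witnessing cover inside the given open sets $U_{i,j}$ and then specializes condition (3). You are in fact more careful than the paper at one point: its ``in particular'' step silently needs the witnessing cover to have $k\geq d$ elements, and your observation that no finite set lies in $\bigcap_{m=1}^\infty A_m(X,T)$ (so $E$ is infinite and $k\geq d$ is forced once $m$ is large) supplies exactly the justification the paper leaves implicit.
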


\begin{proof}
Let $E$ be a $\Delta$-weakly mixing set.
Fix $m\geq 1$. Note that  $E$ is perfect,
then there are $k\geq 2$ and open subsets $\{W_i\}_{i=1}^k$ of $X$ such that
$\bigcup\limits_{i=1}^k W_i\supset E$,
$\diam(W_i)<\frac{1}{m}$ and $W_i\cap E\neq\emptyset$ for $i=1,2\dotsc,k$.
As $W_1,W_1,\dotsc,W_k$ are non-empty open subsets of $X$ intersecting $E$,
by the definition of $\Delta$-weakly mixing sets there exists $m\in\mathbb N$ such that
for any $\alpha\in\{1,2,\dotsc,k\}^k,$
\begin{align*}
(W_{\alpha(1)}\cap E)\cap T^{-m} (W_{\alpha(2)})\cap\dotsb\cap   T^{-(k-1)m} (W_{\alpha(k)})\neq\emptyset.
\end{align*}
So $E\in A_m(X,T)$ and then $E\in \bigcap\limits_{m=1}^\infty A_m(X,T)$.

Conversely, let $E\in \bigcap\limits_{m=1}^\infty A_m(X,T)$.
Fix $n,d\geq2$ and non-empty open subsets $U_{i,j}$ of $X$ intersecting $A$
for $i=1,2,\dotsc,n$ and $j=1,2,\dotsc, d$.
Pick a sufficiently large integer $m$ such that for every $i,j$
there exists $m_{i,j}\in\{1,\dotsc,k\}$ such that $W_{m_{i,j}}\subset U_{i,j}$,
where $\{W_l\}_{l=1}^k$ are  as in the definition of $A_m(X,T)$ for $E$.
By the definition of $A_m(X,T)$, one has
\begin{align*}
\bigcap\limits_{\alpha\in \{1,2,\dotsc,k\}^k}N(W_{\alpha(1)}\cap E,W_{\alpha(2)},\dotsb,W_{\alpha(k)})\neq\emptyset,
\end{align*}
In particular, one has
\begin{align*}
\bigcap\limits_{i=1}^nN(U_{i,1}\cap E, U_{i,2},\dotsc, U_{i,d})\neq\emptyset.
\end{align*}
This implies that $E$ is $\Delta$-weakly mixing and then ends the proof.
\end{proof}

Now, we turn to the proof of Theorem A, which characterizes the chaotic behavior of $\Delta$-weakly mixing sets.
There are several ways to prove it, here we follow some ideas in~\cite{O11}.

Let $(X,T)$ be a dynamical system and $E$ be a closed subset of $X$.
For $\varepsilon>0$ and $d\geq 1$,
we say that a subset $A$ of $X$ is {\it $(\varepsilon,d)$-spread in $E$}
if there exist $\delta\in (0,\varepsilon)$, $n\in\N$
and distinct points $z_1,z_2,\dotsc,z_n\in X$
such that $A\subset \bigcup_{i=1}^n B(z_i,\delta)$ and
for any map $g_j\colon \{z_1,z_2,\dotsc,z_n\}\to E$ for $j=1,2,\dotsc,d$,
there exists $k\in\N$ such that $1/k<\varepsilon$
and $T^{j\cdot k}(B(z_i,\delta))\subset B(g_j(z_i),\varepsilon)$ for $i=1,2,\dotsc,n$ and $j=1,2,\dotsc,d$.

Denote by $\mathcal{X}(\varepsilon,d,E)$ the collection of all closed sets $(\varepsilon,d)$-spread in $E$.
Note the $\mathcal{X}(\varepsilon,d,E)$ is hereditary, that is if $A$ is  $(\varepsilon,d)$-spread in $E$
and $B$ is a non-empty closed subset of $A$ then $B$ is also  $(\varepsilon,d)$-spread in $E$.
Put
\[\mathcal{X}(E)=\bigcap_{d=1}^\infty\bigcap_{k=1}^\infty \mathcal{X}(\tfrac{1}{k},d,E).\]

\begin{lem}\label{lem:Delta-weak-mixing-residual}
If $E$ is a $\Delta$-weakly mixing subset of $X$,
then $\mathcal{X}(E)\cap 2^E$ is a residual subset of $2^E$.
\end{lem}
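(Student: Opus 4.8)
The plan is to write $\mathcal{X}(E)\cap 2^E=\bigcap_{d\geq1}\bigcap_{k\geq1}\bigl(\mathcal{X}(\tfrac1k,d,E)\cap 2^E\bigr)$ and to prove that each factor is open and dense in the compact metric (hence Baire) space $2^E$; residuality then follows from the Baire category theorem. Openness of $\mathcal{X}(\ep,d,E)$ in $2^X$ is immediate once one observes that the witnessing data in the definition of ``$(\ep,d)$-spread'' depend only on the centers $z_1,\dots,z_n$, the radius $\delta$ and the set $E$, not on the spread set itself: if $A$ is $(\ep,d)$-spread with witnesses $\delta,z_1,\dots,z_n$, then the Vietoris-open set $\langle\bigcup_{i=1}^n B(z_i,\delta)\rangle$ is a neighbourhood of $A$ every element of which is again $(\ep,d)$-spread with the same witnesses. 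Intersecting with $2^E$ keeps it relatively open.

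For density I fix $\ep=1/k$ and $d\geq 2$ (the case $d=1$ being subsumed by $d=2$) and use that $E$, being $\Delta$-weakly mixing, is perfect, so finite subsets of $E$ are dense in $2^E$; thus it suffices to place a member of $\mathcal{X}(\ep,d,E)\cap 2^E$ inside an arbitrary nonempty basic open set $\langle V_1,\dots,V_n\rangle\cap 2^E$. Since $E$ is $\Delta$-weakly mixing, $E^n$ is a $\Delta$-transitive subset of $(X^n,T^{(n)})$, so by Proposition~\ref{prop:Delta-transitive-sets} there is a residual set $R\subset E^n$ such that for every $(x_1,\dots,x_n)\in R$ the orbit closure of the diagonal $d$-tuple under $T^{(n)}\times(T^{(n)})^2\times\dots\times(T^{(n)})^d$ contains $(E^n)^d=E^{nd}$; concretely, for every target $(y_{i,j})\in E^{nd}$ and every $\eta>0$ there is $m\in\N$ with $\rho(T^{jm}x_i,y_{i,j})<\eta$ for all $i,j$. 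Because this orbit is dense in the perfect set $E^{nd}$, it meets each nonempty relatively open subset infinitely often, so such times $m$ are unbounded. As $R$ is dense and the big diagonals $\{x_i=x_{i'}\}$ are nowhere dense in $E^n$, I can choose distinct points $z_i\in V_i\cap E$ with $(z_1,\dots,z_n)\in R$, and then the finite set $\{z_1,\dots,z_n\}$ lies in $\langle V_1,\dots,V_n\rangle$.

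It remains to verify that $\{z_1,\dots,z_n\}$ is $(\ep,d)$-spread in $E$, and this is the step I expect to be the main obstacle, because the definition demands a single radius $\delta$ together with, for every choice of target maps $g_1,\dots,g_d$, a single time $k$ carrying the \emph{whole} balls $B(z_i,\delta)$ inside the target balls $B(g_j(z_i),\ep)$ --- a uniform ``ball-into-ball'' requirement, whereas $\Delta$-transitivity only controls the points $z_i$ themselves. I would decouple the two issues. First cover the compact set $E$ by finitely many balls $B(w_1,\ep/3),\dots,B(w_M,\ep/3)$ with $w_l\in E$; it then suffices to treat the finitely many reduced target configurations $(w^{(r)}_{i,j})\in\{w_1,\dots,w_M\}^{nd}$, since any $g_j(z_i)\in E$ lies within $\ep/3$ of some $w_l$. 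For each configuration $r$ the orbit-closure property of $(z_1,\dots,z_n)$ supplies a time $m_r$, which by the previous paragraph may be taken with $1/m_r<\ep$, satisfying $\rho(T^{jm_r}z_i,w^{(r)}_{i,j})<\ep/3$ for all $i,j$. Now the time enters only through the finitely many maps $T^{jm_r}$, each uniformly continuous on $X$, so there is $\delta_{r,j}>0$ with $\rho(a,b)<\delta_{r,j}\Rightarrow\rho(T^{jm_r}a,T^{jm_r}b)<\ep/3$. Setting $\delta=\min_{r,j}\delta_{r,j}\in(0,\ep)$, the triangle inequality gives $T^{jm_r}(B(z_i,\delta))\subset B(w^{(r)}_{i,j},2\ep/3)\subset B(g_j(z_i),\ep)$ whenever $g_j(z_i)$ is within $\ep/3$ of $w^{(r)}_{i,j}$. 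Hence, for any prescribed $g_1,\dots,g_d$, the time $k=m_r$ attached to the matching reduced configuration $r$ witnesses $1/k<\ep$ and all the required inclusions, so $\{z_1,\dots,z_n\}$ is $(\ep,d)$-spread in $E$.

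Combining the two parts, each $\mathcal{X}(\tfrac1k,d,E)\cap 2^E$ is open and dense in $2^E$; since $2^E$ is a compact metric space, the countable intersection $\mathcal{X}(E)\cap 2^E$ is a dense $G_\delta$, hence residual.
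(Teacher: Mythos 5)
Your proof is correct, and it takes a genuinely different route from the paper in the density step (the decomposition of $\mathcal{X}(E)\cap 2^E$ into countably many open dense pieces, the openness argument, and the Baire category conclusion coincide with the paper's). The paper proves density working directly from the hitting-time definition of a $\Delta$-weakly mixing set: after covering $E$ by finitely many $\varepsilon/2$-balls and enumerating all target configurations, it runs a finite induction that produces times $k_r$ together with a nested chain of open sets $W_i^\ell\subset\dotsb\subset W_i^1\subset U_i$ intersecting $E$ such that $T^{j\cdot k_r}W_i^r\subset V_{\alpha_{r,i}(j)}$; the centers $z_i\in W_i^\ell\cap E$ and the radius $\delta$ are extracted only at the end, so the ball-into-ball condition is built into the construction from the start. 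You instead obtain the centers from Proposition~\ref{prop:Delta-transitive-sets} applied to $E^n$ in $(X^n,T^{(n)})$ --- a residual set of tuples whose diagonal orbit closure under $T^{(n)}\times(T^{(n)})^2\times\dotsb\times(T^{(n)})^d$ contains $E^{nd}$ --- and only afterwards recover the uniform ball-into-ball requirement via uniform continuity of the finitely many iterates $T^{jm_r}$ attached to the finitely many reduced target configurations. The two delicate points in your version are both handled correctly: distinctness of the $z_i$ (the diagonals are nowhere dense because $E$ is perfect) and the requirement $1/k<\varepsilon$ (visit times to any open set meeting $E^{nd}$ are unbounded, again by perfectness). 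The trade-off: the paper's induction is self-contained, using only the definition of $\Delta$-weakly mixing sets, whereas you lean on Proposition~\ref{prop:Delta-transitive-sets}, whose proof the paper omits as being analogous to that of Proposition~\ref{prop:Delta-transitive}; in exchange your argument is more modular --- it decouples finding good centers from the uniformity upgrade, and it shows the slightly sharper fact that any tuple in the residual set of Proposition~\ref{prop:Delta-transitive-sets}, pushed off the diagonals, already yields a finite $(\varepsilon,d)$-spread set.
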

\begin{proof}
Fix $\varepsilon>0$ and $d\in\N$.
We first show that $\mathcal{X}(\varepsilon,d,E)$ is open in $2^X$.
Let $A\in \mathcal{X}(\varepsilon,d,E)$.
Let $\delta>0$ and $z_1,z_2,\dotsc,z_n\in X$ as in the definition of set $(\varepsilon,d)$-spread in $E$.
For $i=1,2,\dotsc,n$, put $U_i=B(z_i,\delta)$.
It is easy to see that every closed subset $B\subset \bigcup_{i=1}^n U_i$ is also $(\varepsilon,d)$-spread in $E$.
In particular $A\in \langle U_1, \dotsc, U_n\rangle \subset \mathcal{X}(\varepsilon,d,E)$.
So $\mathcal{X}(\varepsilon,d,E)$ is open in $2^X$.

Now we show that $\mathcal{X}(\varepsilon,d,E)\cap 2^E$ is dense in $2^E$.
Fix non-empty open subsets $U_1,\dotsc,U_n$ of $X$ intersecting $E$.
We want to show that $\langle U_1, \dotsc, U_n\rangle\cap \mathcal{X}(\varepsilon,d,E)\cap 2^E\neq\emptyset$.
Since $E$ is compact, there exists a finite subset $\{y_1,y_2,\dotsc,y_m\}$ of $E$
such that $\bigcup_{i=1}^m B(y_i,\frac{\varepsilon}{2})\supset E$.
For convenience,  denote $V_i=B(y_i,\frac{\varepsilon}{2})$ for $i=1,2,\dotsc,m$.
Since $E$ is $\Delta$-weakly mixing, $E$ is perfect, so we can assume that $m\geq \frac{1}{\varepsilon}$.
We arrange the collection of $n$-tuples on the set $\{1,2,\dotsc,m\}^d$ as the finite sequence
$\{(\alpha_{1,1}, \dotsb,\alpha_{1,n}),
(\alpha_{2,1}, \dotsb,\alpha_{2,n}),\dotsc,(\alpha_{\ell,1}, \dotsb,\alpha_{\ell,n})\}$.

For $(\alpha_{1,1}, \dotsb,\alpha_{1,n})$, as $E$ is $\Delta$-weakly mixing there exists $k_1>m$
such that
\[(U_i\cap E)\cap T^{-k_1}V_{\alpha_{1,i}(1)}\cap T^{-2k_1}V_{\alpha_{1,i}(2)}\cap\dotsb\cap T^{-dk_1}V_{\alpha_{1,i}(d)}\neq\emptyset \]
for $i=1,2,\dotsc,n$.
Choose a non-empty subset $W_i^1$ of $U_i$ intersecting $E$
such that $T^{j\cdot k_1}W_i^1\subset V_{\alpha_{1,i}(j)}$ for $j=1,2,\dotsc,d$.
For $(\alpha_{2,1}, \dotsb,\alpha_{2,n})$,
there exist  $k_2>k_1$ and non-empty subsets $W_i^2$ of $W_i^1$ intersecting $E$
such that $T^{j\cdot k_2}W_i^2\subset V_{\alpha_{2,i}(j)}$
for $i=1,2,\dotsc,n$ and $j=1,2,\dotsc,d$.
We apply this process inductively obtaining positive integers $k_1,k_2,\dotsc,k_\ell$ and
non-empty open subsets $W_i^1,W_i^2,\dotsc,W_i^\ell$ intersecting $E$ such that
\[W_i^\ell\subset W_i^{\ell-1}\subset \dotsb\subset W_i^1\subset U_i\]
and
\[T^{j\cdot k_r}W_i^r\subset V_{\alpha_{r,i}(j)}\]
for $r=1,2,\dotsc,\ell$, $i=1,2,\dotsc,n$ and $j=1,2,\dotsc,d$.
For $i=1,2,\dotsc,n$, pick $z_i\in W_i^\ell\cap E$.
Since $E$ is perfect, we can assume that those $z_i$'s are distinct.
It is clear that $\{z_1,\dotsc,z_n\}\in \langle U_1, \dotsc, U_n\rangle\cap 2^E$.
Choose $\delta\in(0,\varepsilon)$ such that $B(z_i,\delta)\subset W_i^\ell$ for $i=1,2,\dotsc,n$.
For any map $g_j\colon \{z_1,z_2,\dotsc,z_n\}\to E$ for $j=1,2,\dotsc,d$,
there exists $(\alpha_{\ell_0,1},\dotsc,\alpha_{\ell_0,n})$ such that
$V_{\alpha_{\ell_0,i}(j)}\subset B(g_j(z_i),\varepsilon)$ for $i=1,2,\dotsc,n$ and $j=1,2,\dotsc,d$.
Let $k=k_{\ell_0}$. One has
\[T^{j\cdot k}(B(z_i,\delta))\subset T^{j\cdot k_{\ell_0}}W_i^{\ell_0}
\subset V_{\alpha_{\ell_0,i}(j)} \subset B(g_j(z_i),\varepsilon)\]
 for $i=1,2,\dotsc,n$ and $j=1,2,\dotsc,d$.
This implies $\{z_1,\dotsc,z_n\}$ is $(\varepsilon,d)$-spread in $E$ and
then $\mathcal{X}(\varepsilon,d,E)\cap 2^E$ is dense in $2^E$.

Finally,  by the above discussions and the definition of $\mathcal{X}(E)$,
one has $\mathcal{X}(E)\cap 2^E$ is a residual subset of $2^E$.
\end{proof}

\begin{lem}\label{lem:cal-X}
If $C\in \mathcal{X}(E)$, then
 for any closed subset $B$ of $C$,  $d\in \N$
  and continuous functions $h_j\colon B\to E$ for $j=1,2,\dotsc,d$
  there exists an increasing sequence $\{q_k\}_{k=1}^\infty$  of positive integers such that
  \[\lim_{k\to\infty}T^{j\cdot q_k}x=h_j(x)\]
  uniformly on $x\in B$ and $j=1,2,\dotsc,d$.
\end{lem}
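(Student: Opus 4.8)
The plan is to bootstrap the finitary ``spreading'' data encoded in $\mathcal{X}(E)$ into the desired uniform convergence by a construction carried out one scale $\varepsilon=1/m$ at a time. First I would observe that since each $\mathcal{X}(\varepsilon,d,E)$ is hereditary and $B$ is a closed subset of $C\in\mathcal{X}(E)$, we also have $B\in\mathcal{X}(E)$; in particular, for every $k\in\N$ the set $B$ is $(\tfrac1k,d)$-spread in $E$. This lets me work directly with $B$ and forget about $C$.

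Fix $m\in\N$ and set $\varepsilon=1/m$. Since $B$ is compact and each $h_j$ is continuous, the maps $h_1,\dotsc,h_d$ are uniformly continuous on $B$, so there is $\eta>0$ such that $\rho(x,x')<2\eta$ implies $\rho(h_j(x),h_j(x'))<\varepsilon/2$ for all $j=1,\dotsc,d$. I then choose an integer $k$ so large that $1/k<\min(\varepsilon/2,\eta)$ and, to force monotonicity, $k>q_{m-1}$ (with the convention $q_0=0$), where $q_{m-1}$ is the integer produced at the previous stage. Because $B$ is $(\tfrac1k,d)$-spread in $E$, I obtain $\delta\in(0,\tfrac1k)$, distinct points $z_1,\dotsc,z_n\in X$ with $B\subset\bigcup_{i=1}^n B(z_i,\delta)$, together with the spreading property for the parameter $1/k$.

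Now comes the key step, where the continuous targets are fed into the finitary mechanism. For each $i$ with $B\cap B(z_i,\delta)\neq\emptyset$ I pick a representative $x_i\in B\cap B(z_i,\delta)$ and define $g_j(z_i)=h_j(x_i)\in E$; for the remaining indices I set $g_j(z_i)$ equal to an arbitrary fixed point of $E$. Applying the spreading property to these maps yields $q_m\in\N$ with $q_m>k>q_{m-1}$ and $T^{j\cdot q_m}(B(z_i,\delta))\subset B(h_j(x_i),\tfrac1k)$ for all $i$ and $j$. Given any $x\in B$, I choose $i$ with $x\in B(z_i,\delta)$; then $\rho(T^{j q_m}x,h_j(x_i))<\tfrac1k<\varepsilon/2$, while $\rho(x,x_i)<2\delta<2\eta$ gives $\rho(h_j(x_i),h_j(x))<\varepsilon/2$, so the triangle inequality yields $\rho(T^{j q_m}x,h_j(x))<\varepsilon=1/m$ for every $x\in B$ and every $j$. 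Letting $m$ run through $\N$ produces a strictly increasing sequence $\{q_m\}_{m=1}^\infty$ with $\sup_{x\in B}\rho(T^{j q_m}x,h_j(x))\le 1/m\to 0$, which is exactly the asserted uniform convergence.

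The main obstacle is precisely the passage in the third paragraph: the spreading property only permits one to prescribe finitely many target values, one per ball $B(z_i,\delta)$, whereas the maps $h_j$ vary continuously over all of $B$. Reconciling these requires shrinking the mesh $\delta$ below the uniform-continuity modulus $\eta$ of the $h_j$, so that the single prescribed value $h_j(x_i)$ approximates $h_j$ throughout $B\cap B(z_i,\delta)$; the remaining bookkeeping amounts to splitting the error budget $1/m$ evenly between the radius $1/k$ of the spreading balls and the oscillation of the $h_j$, and to choosing $k$ large at each stage both to meet these estimates and to keep $\{q_m\}$ increasing.
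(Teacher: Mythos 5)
Your proof is correct and follows essentially the same route as the paper's: use heredity to regard $B$ itself as $(\varepsilon,d)$-spread in $E$ at every scale, feed the values of the $h_j$ (controlled by uniform continuity) into the finitely many spreading balls, and conclude by the triangle inequality. Your write-up is in fact slightly more careful than the paper's on two minor points: you pick representatives $x_i\in B\cap B(z_i,\delta)$ before applying $h_j$ (the centers $z_i$ lie only in $X$, so the paper's expression $h_j(z_{k,i})$ is, strictly speaking, undefined), and you force $\{q_m\}$ to be increasing by construction rather than by passing to a subsequence.
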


\begin{proof}
 Fix a closed subset $B$ of $C$,  $d\in \N$ and continuous functions $h_j\colon B\to E$ for $j=1,2,\dotsc,d$.
For any $k\in\N$, by the uniform continuity of $h_j$
there exists $\eta>0$ such that if $\rho(y,z)<\eta$ then $\rho(h_j(y),h_j(z))<\frac 1{2k}$ for $j=1,2,\dotsc,d$.
Choose $k'>2k$ with $\frac{1}{k'}<\eta$.
Let $z_{k,1},\dotsc,z_{k,n_{k}}$ and $0<\delta_{k}<\tfrac 1 {k'}$ be as in the definition of
$B$ which is $(\tfrac{1}{k'},d)$-spread in $E$.
There exists $q_k>k'$ such that
\[T^{j\cdot q_k}(B(z_{k,i},\delta_k))\subset B(h_j(z_{k,i}),\tfrac{1}{k'})\]
for $i=1,2,\dotsc,n_{k}$ and $j =1,2,\dotsc,d$.
Going to a subsequence, we may assume that $\{q_k\}_{k=1}^\infty$ is increasing.
We show that the sequence $\{q_k\}$ is as required.

For any $x\in B$, there exists $z_{k,n^x_{k}}$ such that
$\rho(x,z_{k,n^x_{k}})<\delta_k<1/k'<\eta$.
Then
\begin{align*}
  \rho(T^{j\cdot q_k}x,h_j(x))&\leq \rho(T^{j\cdot q_k}x, h_j(z_{k,n^x_{k}}))+\rho(h_j(z_{k,n^x_{k}}),h_j(x))\\
  &<\frac{1}{k'}+\frac{1}{2k}<\frac{1}{k}
\end{align*}
for $j=1,2,\dotsc,d$.
\end{proof}

\begin{lem}\label{lem:cal-X-2}
If $C_1\subset C_2\subset \dotsb$ be an increasing sequence of sets in $\mathcal{X}(E)$, then
for any subset $A$ of $C:=\bigcup_{i=1}^\infty C_i$, $d\in\N$ and continuous functions $g_j\colon A\to E$ for $j=1,2,\dotsc,d$
  there exists an increasing sequence $\{q_k\}_{k=1}^\infty$ of positive integers such that
  \[\lim_{k\to\infty}T^{j\cdot q_k}x=g_j(x)\]
  for every $x\in A$ and $j=1,2,\dotsc,d$.
\end{lem}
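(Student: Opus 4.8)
The plan is to build a single increasing sequence $\{q_k\}$ by a diagonal construction, using the spread property of each $C_k$ at a precision $\tfrac{1}{m_k}\to 0$, and to reduce the problem to pointwise control by choosing, at each stage, a representative point of $A$ inside every relevant net ball. The guiding observation is that each $x\in A$ lies in some $C_{i_0}$, hence in $C_k$ for all $k\ge i_0$; from that stage on, $x$ is captured by the net witnessing that $C_k$ is $(\tfrac{1}{m_k},d)$-spread in $E$, and the target value assigned to the ball containing $x$ can be made to converge to $g_j(x)$.

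In detail, I would proceed inductively on $k$. Suppose $q_1<\dots<q_{k-1}$ have been chosen (with $q_0=0$). Pick $m_k>\max\{k,q_{k-1}\}$ and use $C_k\in\mathcal{X}(E)\subset\mathcal{X}(\tfrac{1}{m_k},d,E)$ to obtain $\delta_k\in(0,\tfrac1{m_k})$ and distinct points $z^{(k)}_1,\dots,z^{(k)}_{n_k}\in X$ with $C_k\subset\bigcup_{i}B(z^{(k)}_i,\delta_k)$. For each $i$ with $A\cap C_k\cap B(z^{(k)}_i,\delta_k)\ne\emptyset$, fix a representative $a^{(k)}_i$ in this set, and for the remaining $i$ let $a^{(k)}_i$ be any point of $E$. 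Define target maps $g^{(k)}_j\colon\{z^{(k)}_1,\dots,z^{(k)}_{n_k}\}\to E$ by $g^{(k)}_j(z^{(k)}_i)=g_j(a^{(k)}_i)$, which is legitimate since $g_j$ takes values in $E$. Applying the defining property of $(\tfrac1{m_k},d)$-spread to these targets yields an integer $q_k$ with $\tfrac1{q_k}<\tfrac1{m_k}$ (so $q_k>m_k>q_{k-1}$, keeping the sequence increasing) and $T^{j\cdot q_k}(B(z^{(k)}_i,\delta_k))\subset B(g_j(a^{(k)}_i),\tfrac1{m_k})$ for all $i$ and all $1\le j\le d$.

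To verify the conclusion, fix $x\in A$ and $1\le j\le d$, and choose $i_0$ with $x\in C_{i_0}$. For every $k\ge i_0$ we have $x\in C_k$, so $x\in B(z^{(k)}_{i^\ast},\delta_k)$ for some index $i^\ast$; this ball meets $A\cap C_k$ (it contains $x$), so it carries a representative $a^{(k)}:=a^{(k)}_{i^\ast}\in A$ with $\rho(a^{(k)},x)<2\delta_k<\tfrac{2}{m_k}\to0$. Thus $a^{(k)}\to x$, and since $g_j$ is continuous at $x$ and $a^{(k)}\in A$, we get $g_j(a^{(k)})\to g_j(x)$. On the other hand $T^{j\cdot q_k}x\in B(g_j(a^{(k)}),\tfrac1{m_k})$, so $\rho(T^{j\cdot q_k}x,g_j(a^{(k)}))<\tfrac1{m_k}\to0$. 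Combining these two facts through the triangle inequality gives $T^{j\cdot q_k}x\to g_j(x)$, as required.

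The main obstacle is exactly that $A$ need not be closed and $g_j$ need not be uniformly continuous on $A$, so one cannot simply pass to closures and invoke the uniform statement of Lemma~\ref{lem:cal-X}; several points of $A$ may share one net ball while their $g_j$-values differ widely. The device that resolves this is the choice of a representative $a^{(k)}_i\in A$ in each ball together with the requirement $\delta_k\to0$: for a fixed $x$ the representative of its ball is forced to approach $x$, and \emph{pointwise} continuity of $g_j$ then suffices to make the prescribed target $g_j(a^{(k)})$ converge to $g_j(x)$, which is precisely what is needed for pointwise (as opposed to uniform) convergence on the possibly non-compact set $A$.
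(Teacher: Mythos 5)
Your proposal is correct and follows essentially the same route as the paper's proof: a diagonal construction applying the $(\varepsilon,d)$-spread property at precisions tending to $0$ (the paper to $\overline{A\cap C_k}$, you to $C_k$ itself), followed by pointwise continuity of $g_j$ at a fixed $x\in A$, which eventually lies in the relevant set. The only notable difference is that the paper writes the targets as $g_j(z_{k,i})$ even though the net points $z_{k,i}$ need not belong to $A$ (a slight abuse), whereas your device of choosing representatives $a^{(k)}_i\in A\cap C_k$ inside each ball makes those targets genuinely well-defined --- a more careful rendering of the same argument.
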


\begin{proof}
Fix a subset $A$ of $C$, $d\in\N$ and continuous functions $g_j\colon A\to E$ for $j=1,2,\dotsc,d$.
For $k\geq 1$, let $A_k=A\cap C_k$.
Since $\mathcal{X}(E)$ is hereditary,
the closure $\overline{A_k}$ of $A_k$ is also in $\mathcal{X}(E)$ for all $k\geq 1$.

For any $k\in\N$,  $\overline{A_k}$ is $(\tfrac{1}{k},d)$-spread in $E$.
Let $z_{k,1},\dotsc,z_{k,n_k}$ and $0<\delta_k<\tfrac 1 k$ be as in the definition
of $\overline{A_k}$ which is $(\tfrac{1}{k},d)$-spread in $E$.
There exists $q_k>k$ such that
\[T^{j\cdot q_k}(B(z_{k,i},\delta_k))\subset B(g_j(z_{k,i}),\tfrac{1}{k})\]
for $i=1,2,\dotsc,n_k$ and $j =1,2,\dotsc,d$.
Going to a subsequence, we may assume that $\{q_k\}_{k=1}^\infty$ is increasing.
We show that the sequence $\{q_k\}$ is as required.

Fix any $x\in A$. There exists $K_1\in\N$ such that $x\in A_k$ for all $k\geq K_1$.
By the continuity of $g_j$, for every $\varepsilon>0$ there exists $K_2\in \N$
with $K_2>\frac{2}{\varepsilon}$
such that if $\rho(x,y)<\frac{1}{K_2}$ then $\rho(g_j(x),g_j(y))<\frac{\varepsilon}{2}$ for $j=1,2,\dotsc,d$.
For every $k>\max\{K_1,K_2\}$, there exists $z_{k,n^x_{k}}$ such that
$\rho(x,z_{k,n^x_{k}})<\delta_k<1/k$.
Then
\begin{align*}
  \rho(T^{j\cdot q_k}x,g_j(x))&\leq \rho(T^{j\cdot q_k}x, g_j(z_{k,n^x_{k}}))+\rho(g_j(z_{k,n^x_{k}}),g_j(x))\\
  &<1/k+\varepsilon/2<\varepsilon
\end{align*}
for $j=1,2,\dotsc,d$.
\end{proof}

Now we give the proof of Theorem~\ref{thm:Delta-weakly-mixing-set}.
\begin{proof}[Proof of Theorem~\ref{thm:Delta-weakly-mixing-set}]
We first prove the necessity. Since $E$ is $\Delta$-weakly mixing,
by Lemma~\ref{lem:Delta-weak-mixing-residual} $\mathcal{X}(E)\cap 2^E$ is a residual subset of $2^E$.
Now by a consequence of the  Kuratowski-Mycielski Theorem (see Lemma~\ref{lem:resudial-relation}),
there exists a countable Cantor sets $C_1\subset C_2\subset \dotsb$ of $E$
such that $C_i\in \mathcal{X}(E)$ for every $i\geq 1$ and $C=\bigcup_{i=1}^\infty C_i$ is dense in $E$.
So the conclusion follows from Lemmas~\ref{lem:cal-X} and~\ref{lem:cal-X-2}.

Now we prove the sufficiency.
Let $C$ be the set satisfying the requirement.
Fix $n, d\geq2$ and non-empty open subsets $U_{i,j}$ of $X$ intersecting $E$
for $i=1,2,\dotsc,n$ and $j=1,2,\dotsc, d$.
It is clear that $E$ is perfect.
There exists pairwise distinct points $x_{ij}\in U_{i,j}\cap E$  for $i=1,2,\dotsc,n$ and $j=1,2,\dotsc, d$.
For $j=1,2,\dotsc,d-1$, define $g_j\colon A=\{x_{i1}\colon i=1,2,\dotsc,n\}\to E$ by
$g_j(x_{i1})=x_{i(j+1)}$ for $i=1,2,\dotsc,n$.
Choose a small enough $\varepsilon>0$ such that $B(x_{ij},\varepsilon)\subset U_{ij}$ for all $i,j$.
It is clear that $g_j$ are continuous, thus we can find $k\in\N$ such that
$\rho(T^{j\cdot k}x_{i1},g_j(x_{i1}))<\varepsilon$ for all $i,j$.
Then $T^{j\cdot k}x_{i1}\in U_{i(j+1)}$ for all $i,j$, which implies
\[k\in \bigcap\limits_{i=1}^nN(U_{i,1}\cap A,U_{i,2},\dotsc, U_{i,d})\neq\emptyset.\]
Therefore, $E$ is $\Delta$-weakly mixing.
\end{proof}

Let $(X,T)$ be a dynamical system.
Following \cite{KL07},
for a tuple $\bm{A}=(A_1, A_2, \dotsc,A_d)$ of subsets of $X$, we say that a non-empty subset $F\subset\Z$
is an {\it independence set} for $\bm{A}$ if for any non-empty finite subset $J\subset F$,
we have
\[\bigcap_{j\in J}T^{-j}A_{s(j)}\neq\emptyset\]
for any $s\in\{1,\dotsc,d\}^J$.
The notion of independence sets
was first presented in~\cite{HY06} under the name {\it interpolating set} (see also \cite{GW95}),
and in~\cite{H06} when defining {\it strong scrambled pairs}.
The authors in \cite{HLY12} systematically studied independence sets in topological and measurable dynamics.
In particular, they characterized  weakly mixing by the independent sets of open sets.

To get a characterization of weakly mixing sets,
the authors in \cite{LOZ15} introduced a local version of independence sets.
Let $A$ be a subset of $X$ and $U_1,U_2,\dotsc,U_n$ be open subsets of $X$ intersecting $A$.
We say that a non-empty subset $F\subset\Z$
is an {\it independence set for $(U_1,U_2,\dotsc,U_n)$ with respect to $A$},
if for every non-empty finite subset $J\subset F$, and $s\in\{1,2,\dotsc,n\}^J$,
\[\bigcap_{j\in J} T^{-j} U_{s(j)} \]
is a non-empty open subset of $X$ intersecting $A$.
We have the following characterization of $\Delta$-weakly mixing subset.

\begin{prop}\label{prop:Delta-WM-independent-set}
Let $(X,T)$ be a dynamical system and $A\subset X$ be a closed set with at least two points.
Then $A$  is $\Delta$-weakly mixing if and only if
for every $d\geq 2$ and non-empty open subsets $U_1,U_2,\dotsc,U_d$ of $X$ intersecting $A$,
there exists $n\in\N$ such that $\{0,n,2n,\dotsc,(d-1)n\}$ is an independent set for $(U_1,U_2,\dotsc,U_d)$
with respect to $A$.
\end{prop}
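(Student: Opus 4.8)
The plan is to prove both directions by unwinding the definitions and isolating the single arithmetic progression $\{0,n,2n,\dotsc,(d-1)n\}$ that governs $\Delta$-weak mixing. The key observation is that for fixed open sets $U_1,\dotsc,U_d$ intersecting $A$, saying $n\in N(U_1,U_2,\dotsc,U_d)$ means exactly that $\bigcap_{i=1}^d T^{-(i-1)n}U_i\neq\emptyset$; that is, the single progression $\{0,n,2n,\dotsc,(d-1)n\}$ hits the sets $U_1,U_2,\dotsc,U_d$ in order. The independence condition strengthens this: it requires that for \emph{every} subset $J\subset\{0,n,\dotsc,(d-1)n\}$ and \emph{every} choice function $s\colon J\to\{1,\dotsc,d\}$, the intersection $\bigcap_{j\in J}T^{-j}U_{s(j)}$ is a non-empty open set meeting $A$. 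So the heart of the proposition is to extract this much stronger combinatorial statement from the $\Delta$-weak mixing hypothesis, which on its face only provides one order-preserving hit at a time.

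For the sufficiency direction I would argue directly. Assume the independence condition holds. Fix $n,d\geq 2$ and open sets $U_{i,j}$ ($1\leq i\leq n$, $1\leq j\leq d$) intersecting $A$; I must produce a common hitting time in $\bigcap_{i=1}^n N(U_{i,1}\cap A,U_{i,2},\dotsc,U_{i,d})$. Since $A$ is weakly mixing (and hence, by Proposition~\ref{prop:weak-mixing} applied to the transitive product system, the hitting time sets form a filter base), I first use weak mixing to find a single $m$ and shrunken open sets $W_{i,j}\subset U_{i,j}$ meeting $A$ that are ``synchronized'' across the index $i$, exactly as in the proof of Proposition~\ref{prop:Delta-transitive-weak-mixing}(1)$\Rightarrow$(2). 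It then suffices to treat a single row, and the independence set supplies an $n_0$ making $\{0,n_0,\dotsc,(d-1)n_0\}$ independent for the collapsed tuple, from which the required hitting time is read off by taking the choice function $s(jn_0)=j+1$.

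The necessity direction is the substantive part, and I expect it to be the main obstacle. I would build the independence set inductively on $|J|$: starting from the full $\Delta$-weak mixing definition, I need that for each increasing prefix of the progression and each choice of labels the intersection is non-empty and still meets $A$. The clean way is to exploit that $A^N$ is $\Delta$-transitive in $(X^N,T^{(N)})$ for every $N$ (this is precisely what $\Delta$-weak mixing asserts), and to encode a prescribed finite independence pattern as the requirement that some diagonal point in a suitable power product visits a prescribed open box. Concretely, for a target subset $J=\{0,n,\dotsc,(d-1)n\}$ with a choice function $s$, non-emptiness of $\bigcap_{j\in J}T^{-j}U_{s(j)}$ is a hitting-time statement $N(U_{s(0)},U_{s(n)},\dotsc)\neq\emptyset$; running this simultaneously over all choice functions $s\in\{1,\dotsc,d\}^{J}$ is exactly a statement about $A^{d^{|J|}}$ being $\Delta$-transitive, and here the single progression spacing $n$ must be chosen once to serve all $s$ at once. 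The delicate point is that the definition of $\Delta$-weak mixing gives the hitting set $N(\cdot)$ for each fixed target but does not by itself guarantee a \emph{common} $n$ across all exponentially many choice functions; I would overcome this by invoking the filter-base / thickness property from Proposition~\ref{prop:Delta-transitive-weak-mixing} to intersect finitely many such hitting sets and thereby select one spacing $n$ valid for every $s$ simultaneously, and then verify that the resulting intersections remain open and meet $A$ using that $A$ is perfect. Assembling these finitely many filter-base intersections into a single valid $n$ is where the real care is needed.
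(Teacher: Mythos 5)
Your sufficiency argument has a genuine gap. The synchronization step --- collapsing the $n$ rows $(U_{i,1},\dotsc,U_{i,d})$ into a single row $W_j=\bigcap_i T^{-m_i}U_{i,j}$ via weak mixing of $A$, ``exactly as in Proposition~\ref{prop:Delta-transitive-weak-mixing}(1)$\Rightarrow$(2)'' --- does not carry over from the global to the local setting. If $x\in A$ witnesses a hitting time for the collapsed row, i.e.\ $x\in W_1\cap A$ and $T^{jn_0}x\in W_{j+1}$ for all $j$, then for a row $i\geq 2$ the witnessing point is $T^{m_i}x$, and nothing guarantees $T^{m_i}x\in A$, since $A$ is not assumed invariant. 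So you obtain $n_0\in N(U_{i,1},U_{i,2},\dotsc,U_{i,d})$ but not $n_0\in N(U_{i,1}\cap A,U_{i,2},\dotsc,U_{i,d})$, which is what the definition of a $\Delta$-weakly mixing set demands. (For the same reason the collection of local hitting time sets $N(U\cap A,V)$ of a weakly mixing set is not known to form a filter base; this failure is exactly why the definitions of weakly mixing and $\Delta$-weakly mixing sets build the simultaneity over $n$ rows into the definition rather than deriving it.) The correct argument is simpler than your reduction, and it is what the paper does: apply the hypothesis once to the full $nd$-tuple $(U_{i,j})$, obtaining $m$ such that $\{0,m,\dotsc,(nd-1)m\}$ is an independence set for it with respect to $A$; then for each fixed $i$, the choice function $s(jm)=(i,j+1)$ on $J=\{0,m,\dotsc,(d-1)m\}$ shows that $A\cap\bigcap_{j=0}^{d-1}T^{-jm}U_{i,j+1}\neq\emptyset$, i.e.\ the \emph{same} $m$ lies in every $N(U_{i,1}\cap A,U_{i,2},\dotsc,U_{i,d})$. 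This exploits precisely the feature you identified at the outset (independence quantifies over all choice functions) but then did not use.

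Your necessity direction is also off track, although the right idea is visible in it. The ``delicate point'' you isolate --- that the definition does not supply a common spacing $n$ valid for all $d^d$ choice functions at once --- is not a difficulty at all: the definition of a $\Delta$-weakly mixing set asserts that $\bigcap_{i=1}^n N(U_{i,1}\cap A,U_{i,2},\dotsc,U_{i,d})\neq\emptyset$ for \emph{any} finite family of rows. Taking one row $(U_{s(1)},U_{s(2)},\dotsc,U_{s(d)})$ for each $s\in\{1,\dotsc,d\}^d$, a single application of the definition with $d^d$ rows yields one integer $n$ lying in all of these hitting time sets simultaneously, and then $\{0,n,\dotsc,(d-1)n\}$ is an independence set for $(U_1,\dotsc,U_d)$ with respect to $A$: for a proper subset $J$ of the progression and a choice function $t$ on $J$, extend $t$ arbitrarily to the full progression and note that the resulting full intersection, which meets $A$, is contained in $\bigcap_{j\in J}T^{-j}U_{t(j)}$. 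No induction on $|J|$, no perfectness, and no filter-base property is needed; indeed the filter-base statement of Proposition~\ref{prop:Delta-transitive-weak-mixing} that you propose to invoke concerns the whole system $(X,T)$, not hitting sets relative to the subset $A$, and its local analogue breaks down for the same basepoint reason described above.
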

\begin{proof}
Necessity.
If $A$ is $\Delta$-weakly mixing,
then for any $d\geq2$ and non-empty open subsets $U_1,U_2,\dotsc,U_d$ of $X$ intersecting $A$,
by the definition of $\Delta$-weakly mixing sets we can choose
\begin{align*}
n\in \bigcap\limits_{s\in\{1,\dotsc,d\}^k}N(U_{s(1)}\cap A,U_{s(2)},\dotsc, U_{s(k)}).
\end{align*}
This implies that
$\{0,n,2n,\dotsc,(k-1)n\}$ is an independent set for $(U_1,U_2,\dotsc,U_d)$
with respect to $A$.

Sufficiency. For every $n,d\geq2$ and non-empty open subsets $U_{i,j}$ of $X$ intersecting $A$ for $i=1,2,\dotsc,n$ and $j=1,2,\dotsc,d,$
there exists $m\in\mathbb N$ such that $\{0,m,2m,\dotsc,ndm\}$ is an independence set
 of $ (U_{i,j})_{1\leq i\leq n,1\leq j\leq d}$ with respect to $A$. Then
\begin{align*}
m\in\bigcap\limits_{i=1}^nN(U_{i,1}\cap A,U_{i,2},\dotsb, U_{i,d})\neq\emptyset,
\end{align*}
which implies that $A$ is $\Delta$-weakly mixing.
\end{proof}

\section{Dynamical systems with positive topological entropy}
Let $(X,T)$ be a dynamical system.
An {\it open cover} is a family of open sets in $X$ whose union is $X$.
The {\it join} of  two open covers $\mathcal{U}$ and $\mathcal{V}$ of $X$, denoted by
 $\mathcal{U}\vee\mathcal{V}$, is the open cover $\{U\cap V\colon U\in\mathcal{U}$ and $V\in\mathcal{V}\}$.
For an open cover $\mathcal{U}$ of $X$,
define $N(\mathcal{U})$ as the minimum among the cardinals of the subcovers of $\mathcal{U}$.
The {\it topological entropy of $\mathcal{U}$ with respect to $T$} is
\[h(T,\mathcal{U})=\lim_{n\to\infty} \frac{1}{n}\log N\biggl(\bigvee_{i=1}^{n-1}T^{-i}\mathcal{U}\biggr),\]
and the {\it topological entropy} of $T$ is
\[h(T)=\sup h(T,\mathcal{U})\]
where the supremum ranges over all open covers of $X$.

Given a positive integer $n\geq 2$,
an $n$-tuple $(x_1,x_2,\dotsc,x_n)$ of points in $X$ is
a {\it topological entropy $n$-tuple} if at least two of the points $(x_1,x_2,\dotsc,x_n)$ are different and
if whenever $U_i$ are closed mutually disjoint neighborhoods of distinct points $x_i$,
the open cover $\{U_i^c\colon i=1,2,\dotsc,n\}$ has positive topological entropy with respect to $T$.

We have the following characterization of entropy tuples by independence sets,
which was firstly proved in \cite[Theorem 7.3]{HY06} for $\mathbb{Z}$-actions and
was generalized in \cite{KL07} for countable discrete amenable group actions.
\begin{thm}\label{thm:entropy-tuple-independence-sets}
Let $(X,T)$ be a dynamical system and $n\geq 2$.
Then a non-diagonal tuple $(x_1,x_2,\dotsc,x_n)\in X^n$ is a topological entropy $n$-tuple
if and only if for any neighborhood $U_i$ of $x_i$,
$(U_1, U_2, \dotsc,U_n)$ has an independence set with positive density.
\end{thm}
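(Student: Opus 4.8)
The plan is to read this as an equivalence between a growth rate (entropy) and a shattering phenomenon (independence), and to pass freely between them through the partition and cover attached to the $U_i$. Assuming as usual that the $x_i$ are distinct, I would fix mutually disjoint closed neighborhoods $U_1,\dots,U_n$ and set $U_0=\bigl(\bigcup_{i=1}^n U_i\bigr)^c$, so that $\alpha=\{U_0,U_1,\dots,U_n\}$ is a finite Borel partition refining the cover $\mathcal{C}=\{U_1^c,\dots,U_n^c\}$. For a window $[0,m)$ I record the set of realized names
\[
\tilde P_m=\Bigl\{\sigma\in\{0,1,\dots,n\}^m\colon \bigcap_{i=0}^{m-1}T^{-i}U_{\sigma_i}\neq\emptyset\Bigr\}.
\]
The elementary observation driving everything is that a block $I\subset[0,m)$ is an independence set for $(U_1,\dots,U_n)$ precisely when the restriction $\tilde P_m|_I$ contains all of $\{1,\dots,n\}^I$.

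For the sufficiency direction I would argue by a direct count, without any Sauer--Shelah input. Given arbitrary neighborhoods, shrink them to disjoint closed ones (the hypothesis supplies a positive-density independence set $F$ even for these), write $F_m=F\cap[0,m)$, and for each pattern $s\colon F_m\to\{1,\dots,n\}$ pick $y_s$ realizing it. If $y_s$ and $y_{s'}$ lie in one cell $\bigcap_i T^{-i}U_{w_i}^c$ of $\bigvee_{i=0}^{m-1}T^{-i}\mathcal{C}$, then $s(j)\neq w_j$ and $s'(j)\neq w_j$ at every $j\in F_m$, so a single cell absorbs at most $(n-1)^{|F_m|}$ of the $n^{|F_m|}$ patterns, whence
\[
N\Bigl(\bigvee_{i=0}^{m-1}T^{-i}\mathcal{C}\Bigr)\geq\Bigl(\tfrac{n}{n-1}\Bigr)^{|F_m|}.
\]
Since $|F_m|\geq c\,m$ along a subsequence, this gives $h(T,\mathcal{C})>0$; as the argument applies to every choice of disjoint closed neighborhoods, $(x_1,\dots,x_n)$ is a topological entropy $n$-tuple.

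The necessity direction is the substantial one. Starting from $h(T,\mathcal{C})>0$, I would first use the (local) variational principle for covers to produce an ergodic invariant measure $\mu$; since $\alpha$ refines $\mathcal{C}$ one gets $h_\mu(T,\alpha)\geq h(T,\mathcal{C})=b>0$. The Shannon--McMillan--Breiman theorem then furnishes, for all large $m$, at least $e^{(b-\varepsilon)m}$ positive-measure atoms of $\bigvee_{i=0}^{m-1}T^{-i}\alpha$, that is $|\tilde P_m|\geq e^{(b-\varepsilon)m}$. At this point I invoke the Sauer--Shelah lemma in its Karpovsky--Milman form over the alphabet $\{0,1,\dots,n\}$: exponential growth at any positive rate forces $\tilde P_m$ to shatter some block $I_m\subset[0,m)$ of size $\geq c\,m$ over the full alphabet, where $c=c(b,n)>0$ is any value with $H(c)+c\log n<b-\varepsilon$. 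Full shattering contains $\{1,\dots,n\}^{I_m}$, so each $I_m$ is an independence set of density $\geq c$ for $(U_1,\dots,U_n)$ inside $[0,m)$.

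It remains to upgrade the windowed sets $I_m$ to a single infinite independence set of positive density. Here I would use that independence sets form a hereditary, shift-invariant, and closed family: forming the empirical measures $\frac1m\sum_{k=0}^{m-1}\delta_{S^k\mathbf{1}_{I_m}}$ of the indicators under the shift $S$ on $\{0,1\}^{\mathbb{Z}_+}$ and passing to a weak-$*$ limit $\nu$, one obtains a shift-invariant measure with $\nu(\{\xi\colon\xi(0)=1\})\geq c$ supported on indicators of independence sets; the ergodic theorem then yields a point whose $1$-set is an independence set of density $\geq c$. Arbitrary neighborhoods are reduced to this case by shrinking (across the distinct coordinate values) and noting that a larger tuple inherits the independence of a smaller one. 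I expect the crux to be exactly the necessity chain: extracting exponentially many realized names from positive cover entropy, converting that cardinality into honest full shattering of a fixed proportion of coordinates via Karpovsky--Milman, and then carrying out the soft but delicate promotion from window-wise density to an infinite positive-density independence set.
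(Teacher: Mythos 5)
You should first note that the paper contains no proof of this statement: Theorem~\ref{thm:entropy-tuple-independence-sets} is quoted from \cite[Theorem 7.3]{HY06} and \cite{KL07}, so the only meaningful comparison is with those sources --- and your outline is essentially their argument (positive cover entropy $\to$ a measure with positive partition entropy $\to$ exponentially many realized names $\to$ Karpovsky--Milman $\to$ promotion to a global positive-density independence set, with the converse by direct counting). Your sufficiency count is correct, and so is the final weak-$*$/Birkhoff promotion: it is legitimate because, as the paper itself observes for $\mathcal{P}_{\bm{A}}$, the family of indicators of independence sets is closed, hereditary and shift invariant (shifting an independence set keeps it one, since $T^{-k}A\neq\emptyset$ forces $A\neq\emptyset$).

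The flaw is in the combinatorial step, which you misquote twice. First, exponential cardinality does \emph{not} force shattering over the full alphabet $\{0,1,\dotsc,n\}$: the set $S=\{1,\dotsc,n\}^m$ has $n^m$ elements, yet its restriction to any non-empty $I$ contains no string using the letter $0$; and in your setting $\tilde P_m$ may genuinely omit the letter $0$ (e.g.\ when the $U_i$ cover $X$, so that $U_0=\emptyset$). The correct statement --- Karpovsky--Milman, in the form of \cite[Lemma 3.2]{KL07} --- produces $I$ with $|I|\geq cm$ and $S|_I\supseteq\{1,\dotsc,n\}^I$, i.e.\ the full product over the alphabet \emph{with the designated letter $0$ removed}; since that weaker conclusion is exactly what your next clause uses, your proof is repaired simply by quoting the lemma correctly. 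Second, your explicit constant is also false: with $H(c)=-c\log c-(1-c)\log(1-c)$, take $S\subseteq\{0,1,2\}^m$ to be all strings using the letter $2$ fewer than $m/4$ times; then $|S|$ is about $e^{m(H(1/4)+\frac{3}{4}\log 2)}\approx e^{1.08m}$, yet no $I$ with $|I|\geq m/4$ satisfies $S|_I\supseteq\{1,2\}^I$ (the constant pattern $2$ on $I$ is unrealized), whereas your inequality $H(c)+c\log n<b-\varepsilon$ would admit $c=1/2$ here. Only the existence of \emph{some} $c(b,n)>0$ is needed, and that is precisely what the cited lemma asserts, so claim no more. A last simplification: you can avoid both SMB and the delicate question of extracting an \emph{ergodic} maximizer from the local variational principle, because an invariant $\mu$ with $h_\mu(T,\alpha)\geq b$ satisfies
\[
\frac{1}{m}H_\mu\Bigl(\bigvee_{i=0}^{m-1}T^{-i}\alpha\Bigr)\geq b \quad\text{for every } m
\]
by subadditivity, and entropy is at most the logarithm of the number of positive-measure atoms, giving $|\tilde P_m|\geq e^{bm}$ directly.
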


Let $K \subset X$ be a non-empty set and $\U$ be an open cover of $X$.
We say that  $\mathcal U$ is {\it admissible}
with respect to $K$ if for any $U\in\U$, $K$ is not contained in the closure of $U$.
A closed subset $K$ of $X$ with at
least two points is called an {\it entropy set}
if for any open cover $\U$ of $X,$ admissible with respect to $K$,
the topological entropy of $\U$ with respect to $T$ is positive.
Denote by $E_s(X,T)$ the collection of all entropy sets of $(X,T)$
and  by $H(X,T)$ the closure of $ E_s(X,T)$ in $2^X$.
An {\it entropy point} is a point $x\in X$ such that the singleton $\{x\}$ is in $H(X,T)$.
Denote by $E_1(X,T)$ the set of all entropy points of $(X,T)$.
It is not hard to see that $H(X,T)=E_s(X,T)\cup\{\{x\}\colon x\in E_1(X,T)\}$.
We also have the following observation.
\begin{lem}
Let $(X,T)$ be a dynamical system.
If $K\in 2^X$ contains at least two points, then
$K$ is an entropy set if and only if for any $k_1,k_2,\dotsc,k_n\in K$ with $|\{k_1,k_2,\dotsc,k_n\}|\geq 2$,
one has $(k_1,k_2,\dotsc,k_n)$ is a topological entropy $n$-tuple.
\end{lem}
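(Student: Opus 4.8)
The plan is to read off both implications directly from the definitions, using only two ingredients. The first is the monotonicity of the cover entropy under refinement: if every member of a cover $\mathcal U$ is contained in some member of a cover $\mathcal V$, then $h(T,\mathcal V)\le h(T,\mathcal U)$. The second is the following elementary dictionary between admissibility and complements of neighborhoods. If $y\in K$ and $U$ is a closed neighborhood of $y$, then $y\in\inte(U)$ forces $y\notin\overline{X\setminus U}$, so any cover whose members are complements of closed neighborhoods of points of $K$ is automatically admissible with respect to $K$; conversely, if $U$ is a member of an admissible cover, then $K\setminus\overline U\neq\emptyset$, and any point of $K\setminus\overline U$ has a closed neighborhood disjoint from $U$. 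Throughout I treat open covers as finite, as is standard for the cover entropy $h(T,\cdot)$.

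For the necessity, suppose $K$ is an entropy set and let $(k_1,\dots,k_n)$ be a tuple in $K$ with at least two distinct coordinates; write $y_1,\dots,y_s$ (with $s\ge 2$) for its distinct values. I would first treat the distinct values: for arbitrary pairwise disjoint closed neighborhoods $U_1,\dots,U_s$ of $y_1,\dots,y_s$, the cover $\{U_1^c,\dots,U_s^c\}$ is admissible with respect to $K$ by the dictionary above, hence $h(T,\{U_p^c\})>0$ because $K$ is an entropy set; since the neighborhoods were arbitrary, this is exactly the assertion that $(y_1,\dots,y_s)$ is a topological entropy $s$-tuple. To pass from the distinct tuple back to $(k_1,\dots,k_n)$ I would invoke Theorem~\ref{thm:entropy-tuple-independence-sets}: given neighborhoods $U_i\ni k_i$, set $V_p=\bigcap_{i\,:\,k_i=y_p}U_i$, a neighborhood of $y_p$, so that $V_{c(i)}\subset U_i$ where $y_{c(i)}=k_i$. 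Any positive-density independence set for $(V_1,\dots,V_s)$ is then also one for $(U_1,\dots,U_n)$, since an intersection $\bigcap_j T^{-j}U_{s'(j)}$ contains the nonempty $\bigcap_j T^{-j}V_{c(s'(j))}$. By Theorem~\ref{thm:entropy-tuple-independence-sets} again, $(k_1,\dots,k_n)$ is an entropy $n$-tuple.

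For the sufficiency, suppose every non-diagonal tuple in $K$ is an entropy tuple, and let $\mathcal U=\{U_1,\dots,U_m\}$ be a cover admissible with respect to $K$; I must show $h(T,\mathcal U)>0$. Each $W_i:=K\setminus\overline{U_i}$ is nonempty by admissibility, while $\bigcup_i U_i=X$ shows that every point of $K$ lies in some $\overline{U_i}$, so no point belongs to all the $W_i$; hence $\bigcup_i W_i$ is nonempty and not a singleton, and I can pick witnesses $k_i\in W_i$ whose values $y_1,\dots,y_s$ (with $s\ge 2$) are not all equal. For each $p$ the point $y_p$ avoids $\overline{U_i}$ for the finitely many $i$ with $k_i=y_p$, so I would choose pairwise disjoint closed neighborhoods $B_1,\dots,B_s$ of $y_1,\dots,y_s$ with $B_p\cap U_i=\emptyset$ whenever $k_i=y_p$. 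Then $U_i\subset B_{c(i)}^c$ for every $i$, so $\mathcal U$ refines $\{B_1^c,\dots,B_s^c\}$; as $(y_1,\dots,y_s)$ is an entropy $s$-tuple by hypothesis we get $h(T,\{B_p^c\})>0$, and monotonicity yields $h(T,\mathcal U)\ge h(T,\{B_p^c\})>0$. Thus $K$ is an entropy set.

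The main obstacle is the bookkeeping forced by repeated coordinates, since the entropy-tuple condition is phrased through pairwise disjoint neighborhoods of distinct points: in both directions the real content is the passage between a tuple and its set of distinct values, carried out through Theorem~\ref{thm:entropy-tuple-independence-sets} in the necessity and through the neighborhoods $B_p$ in the sufficiency. The one step that is not purely formal is guaranteeing, in the sufficiency argument, that the witnesses can be chosen with at least two distinct values so that the resulting tuple is genuinely non-diagonal; this is precisely where one uses that $\mathcal U$ covers $X$ (no point of $K$ can avoid all the $\overline{U_i}$) together with the hypothesis $|K|\ge 2$.
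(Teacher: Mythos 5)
The paper states this lemma as a bare observation and gives no proof, so there is nothing to compare your argument against; judged on its own, your proof is correct, and it is essentially the standard argument implicit in \cite{DYZ} and \cite{BH08}. Two comments. First, your detour through Theorem~\ref{thm:entropy-tuple-independence-sets} to pass from the distinct-value tuple $(y_1,\dotsc,y_s)$ back to $(k_1,\dotsc,k_n)$ is valid but heavier than needed: in the definition of a topological entropy $n$-tuple the pairwise disjoint closed neighborhoods are attached only to the \emph{distinct} points of the tuple, so the defining condition for $(k_1,\dotsc,k_n)$ is verbatim the condition for $(y_1,\dotsc,y_s)$, and no independence-set machinery is required for this collapse. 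Second, your standing convention that admissible covers are finite deserves emphasis, because it is precisely where the sufficiency direction could break: each witness $y_p$ must avoid the closures of only \emph{finitely} many members $U_i$, so that a closed neighborhood $B_p$ disjoint from all of them exists. If infinite admissible covers were allowed (and the paper's Section~4 wording ``a family of open sets'' does not forbid them), this step fails outright: for $K=\{a,b\}$, take a cover consisting of one open set whose closure misses $b$ together with an increasing sequence of open sets whose closures miss $a$ but whose union is $X\setminus\{a\}$; every closed neighborhood of $a$ then meets all but finitely many members, so no coarsening by complements of disjoint closed neighborhoods of points of $K$ exists, and the equivalence itself becomes doubtful for such covers. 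Since the sources the paper follows define entropy sets through finite admissible covers, your convention is the intended reading, but it is a genuine hypothesis rather than bookkeeping, and your proof is the better for flagging it explicitly.
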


In this section, we focus on proving Theorem B.
To do this, we need more results in~\cite{KL07} related to the independence sets.
\begin{lem}[\mbox{\cite[Lemma 3.8]{KL07}}]\label{lem3.33} Let $(X,T)$ be a dynamical system.
Let $(A_1, \dotsc , A_k )$ be a $k$-tuple of subsets of $X$ which has an independence set of positive density.
Suppose that $A_1 = A_{1,1} \cup A_{1,2}$. Then at least one
of the tuples $(A_{1,1}, \dotsc, A_k )$ and $(A_{1,2}, \dotsc , A_k )$ has an independence set of positive
density.
\end{lem}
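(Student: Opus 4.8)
The plan is to reduce the statement to a counting (pigeonhole) argument on independence sets that have positive density, using a Banach-density compactness/averaging trick. Let $F\subset\Z$ be an independence set of positive density $c:=\overline{D}(F)>0$ for the $k$-tuple $(A_1,\dotsc,A_k)$, where $A_1=A_{1,1}\cup A_{1,2}$. First I would recall precisely what the conclusion demands: I must produce a single subset $G\subset\Z$, again of positive density, such that for \emph{every} finite $J\subset G$ and every choice function $s\in\{1,\dotsc,k\}^J$, the intersection $\bigcap_{j\in J}T^{-j}A_{s(j)}$ is non-empty, where now the first coordinate ranges over one fixed piece $A_{1,\ell}$ ($\ell\in\{1,2\}$) rather than over all of $A_1$. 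The natural idea is to label each element $j\in F$ by a bit recording, roughly, \emph{which} of $A_{1,1}$ or $A_{1,2}$ is ``responsible'' for independence when the first coordinate is selected at $j$, and then split $F$ according to this label.

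The cleanest way to carry this out is to work with the realizing points. For each finite $J\subset F$ and each $s\in\{1,\dotsc,k\}^J$ there is a witness point $x_{J,s}\in\bigcap_{j\in J}T^{-j}A_{s(j)}$; whenever $s$ assigns $1$ to some index $j\in J$, we have $T^j x_{J,s}\in A_1=A_{1,1}\cup A_{1,2}$, so $T^j x_{J,s}$ lands in at least one of the two pieces. The difficulty is that this choice of piece depends on $J$, $s$, and $j$ simultaneously, so a naive per-element labelling is not consistent across different finite subsets. The standard device, following the method behind \cite{KL07}, is to pass to an ultrafilter limit (or, equivalently, to a finitely-additive $\{0,1\}$-valued ``coloring'' obtained from a Hausdorff-limit/compactness argument over the lattice of finite subsets) so as to obtain a single coloring $\chi\colon F\to\{1,2\}$ with the property that for every finite $J\subset F$ and every $s$, one can realize the independence requirement using, for each $j$ with $s(j)=1$, the piece $A_{1,\chi(j)}$. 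Concretely, one invokes that an independence set is a ``syndetic-in-the-sense-of-Banach-density'' combinatorial object, and applies a Ramsey-type pigeonhole: the $2$-coloring $\chi$ of $F$ must have one color class $F_\ell=\chi^{-1}(\ell)$ of positive density, since $\overline{D}(F)=\overline{D}(F_1\cup F_2)\le \overline{D}(F_1)+\overline{D}(F_2)$ forces $\max_\ell\overline{D}(F_\ell)\ge c/2>0$.

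The main obstacle, and the step deserving the most care, is establishing the \emph{consistency} of the coloring: that a single $\chi$ simultaneously certifies independence for all finite $J$ with the split first coordinate. A per-pair choice of witness does not glue together, so I would formalize this by a compactness argument. Consider the space $\{1,2\}^F$ of colorings with the product topology; for each finite $J\subset F$ let $K_J\subset\{1,2\}^F$ be the (closed, non-empty) set of colorings $\chi$ such that for every $s\in\{1,\dotsc,k\}^J$ the intersection $\bigcap_{j\in J}T^{-j}A^{(\chi)}_{s(j),j}\neq\emptyset$, where $A^{(\chi)}_{s(j),j}$ equals $A_{1,\chi(j)}$ when $s(j)=1$ and $A_{s(j)}$ otherwise. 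Finiteness of $J$ together with $A_1=A_{1,1}\cup A_{1,2}$ shows each $K_J$ is non-empty, and the family $\{K_J\}$ has the finite intersection property (a coloring witnessing a larger $J'$ witnesses every $J\subset J'$). By compactness $\bigcap_J K_J\neq\emptyset$, yielding the desired global $\chi$. Once $\chi$ is in hand, the density pigeonhole above selects $\ell$ with $\overline{D}(F_\ell)>0$, and $F_\ell$ is then an independence set of positive density for $(A_{1,\ell},A_2,\dotsc,A_k)$, which is exactly the assertion. I would double-check the edge case where some finite $J$ contains no index assigned color $1$, in which case the constraint is vacuous and the argument is unaffected.
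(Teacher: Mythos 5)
Your proof fails at precisely the step you flag as the crux: the claim that each $K_J$ is non-empty is false, so the compactness argument never gets started. Knowing that $J$ is a finite independence set for $(A_1,\dotsc,A_k)$ only gives you, for each \emph{individual} $s\in\{1,\dotsc,k\}^J$, a partial coloring of $\{j\in J\colon s(j)=1\}$ read off from that $s$'s witness; different choice functions $s$ can force incompatible colors at the same position, and nothing glues them together. Concrete counterexample with $k=2$: let $X=\{1,2,3,4\}\times(\mathbb{Z}/10\mathbb{Z})$ with $T(i,m)=(i,m+1)$ (four periodic orbits), and set $A_{1,1}=\{(2,1),(3,2)\}$, $A_{1,2}=\{(1,1),(1,2)\}$, $A_2=\{(2,2),(3,1),(4,1),(4,2)\}$, $A_1=A_{1,1}\cup A_{1,2}$. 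Then $J=\{1,2\}$ is an independence set for $(A_1,A_2)$: the patterns $(1,1),(1,2),(2,1),(2,2)$ are witnessed by $(1,0),(2,0),(3,0),(4,0)$ respectively, and one checks these are the \emph{only} witnesses. But the unique witness $(1,0)$ for $s=(1,1)$ meets only the piece $A_{1,2}$ at times $1$ and $2$, forcing $\chi(1)=\chi(2)=2$, while the unique witness $(2,0)$ for $s=(1,2)$ meets $A_{1,1}$ at time $1$, forcing $\chi(1)=1$. Hence $K_J=\emptyset$. The same obstruction defeats the ultrafilter variant you sketch, for the same reason: the consistency requirement couples colors at several positions through witnesses that vary with $s$. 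Note also that your scheme, had it worked, would prove something strictly stronger than the lemma, namely that a positive-density \emph{subset of $F$ itself}, cut out by one global coloring, is an independence set for a single fixed piece; the counterexample shows that this stronger statement is simply not available.

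For comparison: the paper does not prove this lemma at all, but quotes it as \cite[Lemma 3.8]{KL07}, and the reason is that no soft compactness or pigeonhole argument is known --- the actual proof is quantitative. In outline, Kerr and Li work inside long blocks $J_n=F\cap[0,n)$: each $s\in\{1,\dotsc,k\}^{J_n}$ has a witness which refines $s$ to a map into the $(k+1)$-letter alphabet $\{1',1'',2,\dotsc,k\}$ (recording which piece of $A_1$ is hit wherever $s(j)=1$), producing $k^{|J_n|}$ distinct refined patterns; a Sauer--Perles--Shelah/Karpovsky--Milman-type counting lemma, combined with a pigeonhole over the two primed letters, extracts a subset $I_n\subseteq J_n$ with $|I_n|\geq c\,|J_n|$ on which \emph{all} patterns over $\{1',2,\dotsc,k\}$ (or all over $\{1'',2,\dotsc,k\}$) are realized, i.e.\ $I_n$ is an independence set for one of the two refined tuples; finally their independence-density formalism (subadditivity of maximal independence-set size over intervals) upgrades ``linear-size independence sets in arbitrarily long intervals for one fixed refined tuple'' to a single independence set of positive density. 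That counting input is the heart of the lemma, and your argument has no counterpart of it.
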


For any tuple $\bm{A}=(A_1,\dotsc,A_k)$ of subsets of $X$,
let $\mathcal{P}_{\bm{A}}$ be the set of all independence sets for $\bm{A}$.
By taking indicator functions, the subsets of $\mathbb{Z}_+$ can be viewed as the elements of
$\Sigma_2:=\{0,1\}^{\mathbb{Z}_+}$.
The shift map $\sigma$ on $\Sigma_2$ is induced by the subtraction by $1$, that is
for every $F\subset \mathbb{Z}_+$, $\sigma(F)=F-1:=\{f-1\ge 0: f\in F\}$.
Clearly, $\mathcal{P}_{\bm{A}}$ is closed and shift invariant.

We say a closed shift invariant subset $\mathcal{P}\subset\Sigma_2$ has {\it positive density}
if it has an element with positive density.
Then a tuple $\bm{A}$ has an independence set of positive density
exactly when $\mathcal{P}_{\bm{A}}$ has positive density.
We also say $\mathcal{P}$ is {\it hereditary} if
any subset of any element in $\mathcal{P}$ is an element in $\mathcal{P}$.
It is obvious that $\mathcal{P}_{\bm{A}}$ is hereditary.
For any subset $\mathcal{P}\subset\Sigma_2$,
we say that a finite subset $J\subset \mathbb{Z}_+$ has {\it positive density with respect to $\mathcal{P}$}
if there exists a $K\subset \mathbb{Z}_+$ with positive density such that $(K-K)\cap(J-J)=\{0\}$
and $K+J\in \mathcal{P}$.
We say that a  subset $J\subset \mathbb{Z}_+$ has {\it positive density with respect to $\mathcal{P}$}
if every finite subset of $J$ has positive density with respect to $\mathcal{P}$.

\begin{lem}[\mbox{\cite[Lemma 3.17]{KL07}}]\label{important}
If $\mathcal{P}$ is a hereditary closed shift invariant subset of $\Sigma_2$ with positive density,
then there exists a $J\subset\mathbb Z_+$ with positive density
which also has positive density with respect to $\mathcal{P}$.
\end{lem}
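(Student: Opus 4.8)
The plan is to realize $\mathcal P$ inside the one-sided full shift $(\Sigma_2,\sigma)$ as a subshift and to produce the required set $J$ as the occupied-coordinate set of a single generic point of a suitably chosen invariant measure. First I would fix $E\in\mathcal P$ with $\overline{D}(E)=a>0$ and, viewing $E$ as a point of $\Sigma_2$ via its indicator, form an invariant measure $\mu$ on $\Sigma_2$ as a weak$^*$ limit of the empirical averages $\frac{1}{N_i}\sum_{n=0}^{N_i-1}\delta_{\sigma^n E}$ along a subsequence $N_i$ realizing the upper density $a$. Since $\mathcal P$ is closed and shift invariant, $\mu$ is supported on the orbit closure of $E$, which lies in $\mathcal P$; in particular $\mu(\mathcal P)=1$. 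Writing $[1]_0=\{x\in\Sigma_2\colon x(0)=1\}$, the fact that $[1]_0$ is clopen gives $\mu([1]_0)=a>0$.

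Next I would extract a good generic point. Applying Birkhoff's pointwise ergodic theorem to $\mathbf 1_{[1]_0}$, the limit $\bar f=\mathbb E[\mathbf 1_{[1]_0}\mid\mathcal I]$ satisfies $\int\bar f\,d\mu=a>0$, so $\mu(\{\bar f>0\})>0$. Choosing $x$ in the set $\{\bar f>0\}\cap\supp\mu\cap\mathcal P$, which has positive measure because the last two factors have full measure, the set $J:=\{n\in\Z\colon x(n)=1\}$ has density $D(J)=\bar f(x)>0$. I claim $J$ is the required set. The key point---and what makes one $J$ work simultaneously for all of its finite subsets---is that for every finite $F\subset J$ the cylinder $[F]:=\{y\colon y\supseteq F\}$ is a clopen neighbourhood of $x$; since $x\in\supp\mu$, this forces $\mu([F])>0$.

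It then remains to turn each inequality $\mu([F])>0$ into a witness for the statement that $F$ has positive density with respect to $\mathcal P$. Fixing a modulus $M>\diam(F)$ and applying Birkhoff's theorem to $\mathbf 1_{[F]}=\prod_{f\in F}\mathbf 1_{\sigma^{-f}[1]_0}$ along the $\mu$-preserving transformation $\sigma^{M}$, one gets $\int\mathbb E[\mathbf 1_{[F]}\mid\mathcal I_{\sigma^{M}}]\,d\mu=\mu([F])>0$, so on a set of positive measure the averages are positive. Choosing such a $y$ inside $\mathcal P$, the set $K:=\{m\in M\Z\colon F+m\subseteq y\}$ has positive density. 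Heredity gives $K+F\subseteq y\in\mathcal P$, hence $K+F\in\mathcal P$; and since $K-K\subseteq M\mZ$ while every nonzero element of $F-F$ lies in $(-M,M)\setminus\{0\}$, the grid condition $(K-K)\cap(F-F)=\{0\}$ holds. Thus every finite $F\subset J$ has positive density with respect to $\mathcal P$, and therefore so does $J$, completing the proof.

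The step I expect to require the most care is the extraction of the generic point: one must arrange \emph{simultaneously} that $x\in\supp\mu$ (so that every finitely supported cylinder $[F]$ with $F\subset J$ automatically receives positive mass), that $x\in\mathcal P$, and that the frequency of $1$'s in $x$ is genuinely positive. The apparent difficulty of the grid condition $(K-K)\cap(F-F)=\{0\}$, by contrast, is illusory: it is secured for free by averaging along $\sigma^{M}$ with $M>\diam(F)$. The conceptual heart of the argument is the observation that a single object---the occupied-coordinate set of one $\mu$-generic, full-support point---serves as $J$ for all of its finite subsets at once, so that no delicate inductive or diagonal construction of $J$ is needed.
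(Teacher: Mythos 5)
Your proof is correct. For comparison: the paper itself offers no proof of this lemma --- it simply cites \cite[Lemma 3.17]{KL07} and remarks that the argument given there for $\{0,1\}^{\mathbb{Z}}$ remains valid for $\Sigma_2=\{0,1\}^{\mathbb{Z}_+}$ --- so what you have written is a self-contained replacement for that citation, carried out directly in the one-sided setting (the weak$^*$ limit of empirical measures, the portmanteau argument giving $\mu(\mathcal{P})=1$, and Birkhoff's theorem for the non-invertible shift all go through verbatim), i.e.\ exactly the adaptation the paper asserts without details. Your argument is of the same ergodic-theoretic nature as Kerr--Li's (a Furstenberg-type correspondence producing an invariant measure concentrated on $\mathcal{P}$ with $\mu([1]_0)>0$, followed by the pointwise ergodic theorem), and its two organizing devices are exactly right: choosing the generic point $x$ inside $\supp\mu$ makes every cylinder $[F]$ with $F\subset J$ finite a clopen neighbourhood of $x$, hence of positive measure, which is what lets a single $J$ serve all of its finite subsets simultaneously; and averaging along $\sigma^{M}$ with $M>\diam(F)$ makes the witness $K$ a set of multiples of $M$, so that the grid condition $(K-K)\cap(F-F)=\{0\}$ is automatic, $K$ has an exact (not merely upper) positive density, and heredity of $\mathcal{P}$ upgrades $K+F\subseteq y$ to $K+F\in\mathcal{P}$. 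Only two micro-points deserve an explicit word in the write-up: $x$ (and later $y$) must be taken in the full-measure set where the relevant Birkhoff averages actually converge to the conditional expectation, and $\mu(\mathcal{P})=1$ should be justified by noting that the empirical measures are supported on the forward orbit of $E$, which stays in the closed shift-invariant set $\mathcal{P}$. Neither is a gap.
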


We need to mention that the Lemma 3.17 in \cite{KL07} was shown for the case $\{0,1\}^{\mathbb{Z}}$, but its proof is also valid for the case $\Sigma_2:=\{0,1\}^{\mathbb{Z}_+}$.
Now, we start to prove Theorem B. Note that in the proof we need to use the Szemeredi's Theorem,
which states that any subset of $\Z$ with positive density contains
arithmetic progressions of arbitrary finite length.

\begin{proof}[Proof of Theorem B]
By Lemma~\ref{lem:Delta-WM-G-delta}, we know that the collection of $\Delta$-weakly mixing sets
is a $G_\delta$ subset of $2^X$.
So it is enough to prove that this collection is dense in the collection of entropy sets.
Now fix an entropy set $E$ and $\varepsilon>0$.
Since $E$ is compact and contains at least two points, there exists a finite subset $\{x_1,x_2,\dotsc,x_n\}$
of $E$ with $n\geq 2$ such that $\rho_H(E,\{x_1,x_2,\dotsc,x_n\})<\varepsilon/2$.
Choose pairwise disjoint closed neighborhoods
$A_i$ of $x_i$ with $\diam(A_i)<\varepsilon/2$ for $i=1,2,\dotsc,n$.
Let $\mathcal{E}_1=\{1,2,\dotsc,n\}$, $\mathcal{E}_2=\mathcal{E}_1\times \mathcal{E}_1=\{1,2,\dotsc,n\}^2$,
$\mathcal{E}_3=\mathcal{E}_2\times \mathcal{E}_2 \times  \mathcal{E}_2=\{1,2,\dotsc,n\}^6$
and
\[\mathcal{E}_{k}=\mathcal{E}_{k-1}\times \mathcal{E}_{k-1} \times \dotsb\times \mathcal{E}_{k-1}\ (k\text{ times})
=\{1,2,\dotsc,n\}^{k!}.\]

We shall construct, via induction on $k$, non-empty closed subsets $A_\sigma$ for $\sigma\in \mathcal{E}_k$
with the following properties:
\begin{enumerate}
  \item when $k=1$, $A_\sigma=A_{\sigma(1)}$ for $\sigma\in \mathcal{E}_1$,
  \item when $k\geq 2$, there exists $n_k\in\N$ such that
  for any $\sigma=\sigma_1\sigma_2\dotsb\sigma_k\in\mathcal{E}_k$, where $\sigma_i\in\mathcal{E}_{k-1}$ for
  $i=1,2,\dotsc,k$, one has
  \[A_\sigma\subset A_{\sigma_1}\cap T^{-n_k} A_{\sigma_2}\cap \dotsb\cap T^{-(k-1)n_k} A_{\sigma_k},\]
  \item when $k \geq 2, \diam(A_\sigma) \leq 2^{-k}$ for all $\sigma\in\mathcal{E}_k$,
  \item for every $k\geq 1$,
  the collection $\{A_\sigma\colon \sigma\in \mathcal{E}_k\}$, ordered into a tuple,
  has an independence set of positive density.
\end{enumerate}

Suppose that we have constructed all the sets $A_\sigma$.
Let
\[A=\bigcap\limits_{k=1}^\infty\bigcup\limits_{\sigma\in\mathcal E_k} A_{\sigma}.\]
Note that $A_\sigma$ for all $\sigma$ in a given $\mathcal{E}_k$ are pairwise disjoint
because of property (2). Then $A$ is a Cantor set.
It clear that $A\subset \bigcup_{i=1}^nA_i$ and $A\cap A_i\neq\emptyset$ for $i=1,2\dotsc,n$.
Then $\rho_H(A,\{x_1,x_2,\dotsc,x_n\})<\varepsilon/2$ and  $\rho_H(E,A)<\varepsilon$.
By (4) and Theorem~\ref{thm:entropy-tuple-independence-sets} we know that $A$ is an entropy set.
We use (2) and Proposition~\ref{prop:Delta-WM-independent-set} to show that $A$ is $\Delta$-weakly mixing.
Fix $d\geq 2$ and non-empty open subsets $U_1,U_2,\dotsc,U_d$ of $X$ intersecting $A$.
Pick a sufficiently large integer $k\geq d$ such that for every $i=1,2,\dotsc,d$
there exists $\alpha_i\in \mathcal{E}_k$ such that $A_{\sigma_i}\subset U_i$ and
all those $\alpha_i$ are pairwise distinct.
By (2) there exists $n_{k+1}\in\N$ as required.
Then $\{0,n_{k+1},\dotsc,(d-1)n_{k+1}\}$ is an independent set for $(U_1,U_2,\dotsc,U_d)$
with respect to $A$. Thus $A$ is $\Delta$-weakly mixing.

Now we construct the sets $A_\sigma$. Define $A_\sigma$ for $\sigma\in\mathcal{E}_1$ according to the property (1).
By the assumption the property (4) is satisfied for $k=1$.
Assume that we have constructed $A_\sigma$ for all $\sigma\in \mathcal{E}_j$ and $j=1,2,\dotsc,k$
with properties (1)-(4).
Set $\bm{A}_k$ to be $\{A_\sigma\colon \sigma\in \mathcal{E}_k\}$ ordered into a tuple.

By Lemma~\ref{important},
there exists an $H\subset \mathbb{Z}_+$ with positive density which also has positive density with
respect to $\mathcal{P}_{\bm{A}_k}$.
By the Szemeredi's Theorem, there exist $ m_k,n_k\in\N$ such that $\{m_k,m_k+n_k,\dotsc,m_k+(k-1)n_k\}\subset H$.
Note that $\{m_k,m_k+n_k,\dotsc,m_k+(k-1)n_k\}$ has positive density with
respect to $\mathcal{P}_{\bm{A}_k}$, then so does $\{0,n_k,\dotsc,(k-1)n_k\}$.

Let $J=\{0,n_k,\dotsc,(k-1)n_k\}$.
There exists a $K\subset \mathbb{Z}_+$ with positive density such that
$$(K-K)\cap(J-J)=\{0\} \text{ and } K+J\in \mathcal{P}_{\bm{A}_k}.$$
Then $K$ is an independence set of
the collection
\[\{ A_{\sigma_1}\cap T^{-n_k}A_{\sigma_2}\cap \dotsb\cap T^{-(k-1)n_k} A_{\sigma_k}\colon
\sigma_i\in \mathcal{E}_k\ \text{for}\ i=1,2,\dotsc,k\} \]
ordered into a tuple.

It follows from Lemma \ref{lem3.33} that
for every $\sigma=\sigma_1\sigma_2\dotsb\sigma_k\in\mathcal{E}_{k+1}$
there exists a non-empty closed subset
$A_{\sigma}\subset A_{\sigma_1}\cap T^{-n_k}A_{\sigma_2}\cap \dotsb\cap T^{-(k-1)n_k} A_{\sigma_k}$
such that $\diam(A_{\sigma})<1/(k+1)$ and
the collection
$\{A_{\sigma}\colon \sigma\in\mathcal{E}_{k+1}\}$,
ordered into a tuple, has independence set of positive density.
Then properties (2)-(4) hold for $k+1$.
This ends the induction procedure and hence the proof of the theorem.
\end{proof}

\section{Dynamical systems admitting a non-measurable distal ergodic measure}
In this section, we aim to prove Theorem C.
We refer the reader to \cite{F81,G03} for basic notions in ergodic theory,
especially for the decomposition of the measure over a factor.
In \cite{BGKM02} the authors used the Furstenberg-Zimmer structure theorem of an ergodic system showing that
a dynamical systems admitting a non-measurable distal ergodic measure is Li-Yorke chaotic.
Since in our discussion the iterates of the map is involved, we can not simply use the structure theorem.
However, we need some results of Furstenberg in the proof of Szemeredi's Theorem.
Firstly, we state a proposition related to the measurable weak mixing extension in \cite{F81}.

\begin{prop}[\mbox{\cite[Proposition 7.8]{F81}}] Let $\Gamma$ be an ablelian group.
If $(X,\mathcal{B},\mu,\Gamma)\to (Y,\mathcal{D},\nu,\Gamma)$ is weakly mixing relative to $\Gamma$
and $T_1,T_2,\dotsc,T_\ell$ are distinct elements of $\Gamma$,
then for any functions $f_1,f_2,\dotsc,f_\ell\in L^\infty(X)$,
\[\frac{1}{N}\sum_{n=1}^N\int \left\{ E\left(\prod_{i=1}^\ell T^n_if_i|Y\right)-\prod_{i=1}^\ell T^n_i E(f_i|Y)\right\}^2d\nu\to 0\]
as $N\to\infty$.
\end{prop}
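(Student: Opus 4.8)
The plan is to argue by induction on $\ell$, reducing to the case where every $f_i$ has vanishing conditional expectation and then closing that case by a van der Corput argument on the relative product $X\times_Y X$. First I would note that the conditional expectation commutes with the ($\nu$-preserving, $Y$-equivariant) action, so $E(T^n f\mid Y)=T^n E(f\mid Y)$; in particular the bracketed term vanishes identically when $\ell=1$, giving the base case. For the inductive step, write $f_i=c_i+h_i$ with $c_i=E(f_i\mid Y)$ and $h_i=f_i-c_i$, so that $E(h_i\mid Y)=0$, and expand $\prod_{i}T_i^n f_i$ over the subsets $S\subseteq\{1,\dots,\ell\}$ recording which coordinates are taken mean-zero. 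The term $S=\emptyset$ equals $\prod_i T_i^n c_i$, which is $Y$-measurable and hence equals its own conditional expectation, so it cancels the subtracted product. Pulling the remaining $Y$-measurable factors $\prod_{i\notin S}T_i^n c_i$ (uniformly bounded in $L^\infty$) out of the conditional expectation, it remains to show, for every non-empty $S$, that
\[\frac1N\sum_{n=1}^N\int E\Bigl(\prod_{i\in S}T_i^n h_i\,\Big|\,Y\Bigr)^2\,d\nu\longrightarrow 0.\]
For $|S|<\ell$ this follows from the inductive hypothesis applied to the distinct transformations $\{T_i\}_{i\in S}$, since $E(h_i\mid Y)=0$ makes the associated product of means vanish; thus the whole statement reduces to the single case $S=\{1,\dots,\ell\}$.

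For that remaining case I would pass to the relative product system, writing $\widetilde X=X\times_Y X$, $\widetilde\mu=\mu\times_Y\mu$, $\widetilde T_i=T_i\times T_i$ (which preserve $\widetilde\mu$ by $Y$-equivariance), and using the identity $\int E(F\mid Y)^2\,d\nu=\int_{\widetilde X}F\otimes F\,d\widetilde\mu$. With $u_n=\prod_{i=1}^\ell\widetilde T_i^{\,n}(h_i\otimes h_i)$ the goal becomes $\frac1N\sum_n\langle u_n,1\rangle\to0$, which I would deduce from $\bigl\|\frac1N\sum_n u_n\bigr\|_{L^2(\widetilde\mu)}\to0$ via the Hilbert-space van der Corput lemma. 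Computing the correlation $\langle u_{n+h},u_n\rangle$, applying the measure-preserving $\widetilde T_\ell^{-n}$, and using that $\Gamma$ is abelian, one rewrites it as the integral of a fixed ($h$-dependent) function against $\prod_{i<\ell}S_i^{\,n}w_i^{(h)}$, where $S_i=\widetilde T_i\widetilde T_\ell^{-1}$ are $\ell-1$ \emph{distinct} nonidentity transformations (here distinctness of the $T_i$ is essential) and $w_i^{(h)}=\widetilde T_i^{\,h}(h_i\otimes h_i)\cdot(h_i\otimes h_i)$. Since $X\to Y$ is relatively weakly mixing, it is so of all orders, so $\widetilde X\to Y$ is again relatively weakly mixing; applying the inductive hypothesis on $\widetilde X$ together with relative ergodicity identifies the $n$-average of $\prod_{i<\ell}S_i^{\,n}w_i^{(h)}$ through conditional expectations over $Y$, and the crucial cancellation $E(h_i\otimes h_i\mid Y)=E(h_i\mid Y)^2=0$ forces the $h$-averaged correlations to tend to $0$, so van der Corput applies.

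The principal obstacle is exactly this last case: making the van der Corput induction genuinely descend from $\ell$ to $\ell-1$ transformations on the relative product while preserving the mean-zero structure. The delicate points are verifying that the $S_i$ stay distinct and nontrivial (so the inductive hypothesis is about strictly fewer transformations), that relative weak mixing propagates to the fiber products $\widetilde X$ so that the inductive hypothesis is even applicable there, and that the mean-zero condition survives the tensoring $h_i\mapsto h_i\otimes h_i$ so that the diagonal correlation terms are annihilated after averaging in $h$. This is where relative weak mixing --- rather than mere relative ergodicity --- is indispensable, since it is precisely what upgrades one-step mixing to the ``mixing of all orders'' needed to kill every diagonal term in the van der Corput estimate.
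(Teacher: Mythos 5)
You should first be aware that the paper contains no proof of this statement: it is quoted verbatim from Furstenberg's book (\cite[Proposition 7.8]{F81}) and used as a black box. So the only benchmark is Furstenberg's own argument, whose skeleton --- induction on $\ell$, the splitting $f_i=E(f_i\mid Y)+h_i$, reduction to relatively mean-zero functions, and van der Corput on the relative product $\widetilde X=X\times_Y X$ --- your proposal correctly reproduces. The base case, the subset expansion, the identity $\int E(F\mid Y)^2\,d\nu=\int F\otimes F\,d\widetilde\mu$, and the passage to $S_i=\widetilde T_i\widetilde T_\ell^{-1}$ are all right. The problem is that the argument does not close at the decisive step, for two related reasons.

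First, your inductive hypothesis is too weak to evaluate the van der Corput correlations. After the shift, $\langle u_{n+h},u_n\rangle=\int_{\widetilde X}w_\ell^{(h)}\prod_{i<\ell}S_i^n w_i^{(h)}\,d\widetilde\mu$, and you must average this over $n$. The hypothesis you carry (the proposition itself for $\ell-1$ transformations) controls only $\bigl\|E\bigl(\prod_{i<\ell}S_i^n w_i^{(h)}\mid Y\bigr)-\prod_{i<\ell}S_i^n E(w_i^{(h)}\mid Y)\bigr\|_{L^2(\nu)}$ in Cesàro mean, i.e.\ it identifies integrals of $\prod_{i<\ell}S_i^n w_i^{(h)}$ against \emph{$Y$-measurable} test functions. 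But the test function here, $w_\ell^{(h)}$, is not $Y$-measurable, and $E(FG\mid Y)\neq E(F\mid Y)E(G\mid Y)$; knowing $E(P_n\mid Y)$ says nothing about $\langle w_\ell^{(h)},P_n-E(P_n\mid Y)\rangle$, whose $L^2$ size does not tend to zero. (For $\ell-1=1$ one can substitute the mean ergodic theorem plus relative ergodicity of $S_1$; for $\ell-1\geq 2$ no norm-convergence theorem for multiple averages is available at this level, so the induction genuinely breaks.) The repair --- which is in effect what Furstenberg proves --- is to strengthen the statement carried through the induction to a weighted form: for every additional fixed $g\in L^\infty$, $\frac1N\sum_{n=1}^N\int E\bigl(g\prod_i T_i^n h_i\mid Y\bigr)^2 d\nu\to 0$ whenever all $E(h_i\mid Y)=0$; equivalently, one keeps as inductive hypothesis the norm statement $\|\frac1N\sum_n u_n\|_{L^2(\widetilde\mu)}\to0$, which is exactly what van der Corput delivers and which \emph{can} be tested against arbitrary fixed functions such as $w_\ell^{(h)}$.

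Second, the ``crucial cancellation'' is misplaced. It is true that $E(h_i\otimes h_i\mid Y)=0$, but the functions entering the correlations are $w_i^{(h)}=\widetilde T_i^h(h_i\otimes h_i)\cdot(h_i\otimes h_i)$, and $E(w_i^{(h)}\mid Y)$ is \emph{not} zero for any fixed $h$. Hence the product-of-means term in the van der Corput estimate does not vanish termwise; it only becomes small after averaging in $h$, and that smallness is a separate lemma about weakly mixing extensions (relative correlation decay in density): if $E(F\mid Y)=0$ and $\widetilde T$ is a suitable nontrivial element, then $\frac1H\sum_{h\leq H}\|E(\widetilde T^h F\cdot F\mid Y)\|_{L^2(\nu)}\to0$, whose proof uses relative ergodicity of the four-fold fiber product $X\times_YX\times_YX\times_YX$. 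This lemma --- together with the observation that at most one of the distinct $T_i$ can be the identity of $\Gamma$, so a factor with $T_{i_0}\neq\mathrm{id}$ is always available to produce the decay --- is the actual engine of the van der Corput step; attributing it to the identity $E(h_i\otimes h_i\mid Y)=0$ alone leaves the key estimate unproved.
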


The following proposition is a direct consequence of the above one.

\begin{prop}[\mbox{\cite[Lemma 7.9]{F81}}]\label{prop:WM-F81-prop78} Let $\Gamma$ be an ablelian group.
If $(X,\mathcal{B},\mu,\Gamma)\to (Y,\mathcal{D},\nu,\Gamma)$ is weakly mixing relative to $\Gamma$
and $T_1,T_2,\dotsc,T_\ell$ are distinct elements of $\Gamma$, and let $f\in L^\infty(X)$.
Let $\varepsilon,\delta>0$.
For each $y$, let $\psi(y)=\int f d\mu_y$, where $\mu=\int \mu_yd\nu$ is the decomposition of the measure $\mu$ over $\nu$.
Then the set of
$n\geq 0$ for which
\[\mu\left\{y\in Y: \left|\int \prod T_i^n fd\mu_y-\prod \psi(T_i^ny)\right|>\varepsilon\right\}>\delta\]
has density $0$.
\end{prop}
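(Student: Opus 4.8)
The plan is to read off the integrand of the preceding Proposition \mbox{\cite[Proposition 7.8]{F81}} (specialized to the case $f_1=\dotsb=f_\ell=f$) as precisely the square of the quantity that is being measured in the present statement, and then to convert the resulting Ces\`aro convergence into a density-zero statement by a routine Chebyshev argument. First I would unravel the disintegration notation. Since $\mu=\int\mu_y\,d\nu$ is the decomposition of $\mu$ over $\nu$, for any $g\in L^\infty(X)$ the conditional expectation is given by $E(g\mid Y)(y)=\int g\,d\mu_y$ for $\nu$-a.e.\ $y$; in particular $\psi(y)=E(f\mid Y)(y)$. Because the factor map intertwines the actions (with the Koopman convention $(T_i g)(y)=g(T_i y)$), one has $T_i^n E(f\mid Y)(y)=E(f\mid Y)(T_i^n y)=\psi(T_i^n y)$, and likewise $E\bigl(\prod_{i=1}^\ell T_i^n f\mid Y\bigr)(y)=\int\prod_{i=1}^\ell T_i^n f\,d\mu_y$. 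Setting
\[\Phi_n(y)=E\Bigl(\prod_{i=1}^\ell T_i^n f\,\Big|\,Y\Bigr)(y)-\prod_{i=1}^\ell T_i^n E(f\mid Y)(y),\]
the expression inside the absolute value in the statement is exactly $\Phi_n(y)$, and applying the preceding proposition with all $f_i=f$ yields
\[\frac1N\sum_{n=1}^N\int \Phi_n^2\,d\nu\lra 0\qquad\text{as }N\to\infty.\]

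Next I would apply Chebyshev's inequality. Writing $a_n=\int\Phi_n^2\,d\nu\geq 0$, for each $n$ we have
\[\nu\bigl(\{y:|\Phi_n(y)|>\varepsilon\}\bigr)=\nu\bigl(\{y:\Phi_n(y)^2>\varepsilon^2\}\bigr)\leq \frac{a_n}{\varepsilon^2}.\]
Hence, if $n$ belongs to the ``bad'' set $B=\{n:\nu(\{y:|\Phi_n(y)|>\varepsilon\})>\delta\}$, then necessarily $a_n>\delta\varepsilon^2$, so that $B\subset\{n:a_n>\delta\varepsilon^2\}$. (Here the measure of the set $\{y\in Y:\dotsb\}$ is $\nu$, the measure carried by the factor $Y$.)

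Finally I would invoke the elementary fact that a nonnegative sequence whose Ces\`aro averages tend to zero has super-level sets of density zero: for any $c>0$ and any $N$,
\[c\,\bigl|\{n\leq N:a_n>c\}\bigr|\leq \sum_{n=1}^N a_n,\]
so that $\bigl|\{n\leq N:a_n>c\}\bigr|/N\leq (cN)^{-1}\sum_{n=1}^N a_n\to 0$, whence $\{n:a_n>c\}$ has density zero. Taking $c=\delta\varepsilon^2$ and using the inclusion $B\subset\{n:a_n>\delta\varepsilon^2\}$ shows that $B$ has density zero, which is the desired conclusion. The whole argument is a standard deduction rather than a genuine obstacle; the only point requiring care is the bookkeeping in the first paragraph, namely verifying that after specializing to $f_1=\dotsb=f_\ell=f$ the integrand of the previous proposition really coincides with $\Phi_n^2$ for the $\Phi_n$ read off from the present statement, and that the Koopman identity $T_i^n E(f\mid Y)(y)=\psi(T_i^n y)$ is applied consistently throughout.
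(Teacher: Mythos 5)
Your proof is correct and is precisely the deduction the paper has in mind: the paper simply declares this proposition to be ``a direct consequence'' of the cited Proposition 7.8, and your specialization $f_1=\dotsb=f_\ell=f$ followed by Chebyshev and the Ces\`aro-average-to-density-zero step is exactly that deduction spelled out (including the correct reading of the measure on $\{y\in Y:\dotsb\}$ as $\nu$).
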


We restate Proposition \ref{prop:WM-F81-prop78} in our purpose as following.

\begin{prop} \label{prop:WM-density-one}
Suppose that $(X,\mathcal{B},\mu,T)$ is ergodic and
$\pi:(X,\mathcal{B},\mu,T)\to (Y,\mathcal{D},\nu,S)$ is weakly mixing.
Let $\mu=\int \mu_yd\nu$ be the decomposition of the measure $\mu$ over $\nu$.
Let $r\geq 2$ and $A_1,A_2,\dotsc,A_r\in \mathcal{B}$.
Then for every $\varepsilon,\delta>0$, there exists a subset $F\subset \N$ with density one
 such that for any $n\in F$
\begin{align*}
  \nu(\{y\in Y\colon &\big|\mu_y(A_{1}\cap T^{-n}A_{2}\cap\dotsb\cap T^{-(r-1)n}A_{r})\\
&\quad - \mu_y(A_{1})\mu_{S^ny}(A_{2})\dotsb\mu_{S^{(r-1)n}y}(A_{r})\big|< \varepsilon\})\geq 1-\delta.
\end{align*}
\end{prop}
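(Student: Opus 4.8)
The goal is to reformulate Proposition~\ref{prop:WM-F81-prop78} (Furstenberg's Lemma 7.9) in the concrete setting where $\Gamma=\mZ$, the group element is $T$, and the "distinct elements" $T_1,\dotsc,T_\ell$ are taken to be the powers $T, T^2, \dotsc, T^{r-1}$, so that the $r$-term arithmetic-progression structure $A_1\cap T^{-n}A_2\cap\dotsb\cap T^{-(r-1)n}A_r$ appears. The plan is to apply Proposition~\ref{prop:WM-F81-prop78} repeatedly to control each conditional measure factor, and then to combine finitely many density-zero exceptional sets into a single density-one set.

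First I would set up the correspondence between the two statements. In Furstenberg's formulation one takes a single function $f\in L^\infty(X)$ and studies $\int \prod_i T_i^n f\, d\mu_y$ versus $\prod_i \psi(T_i^n y)$, where $\psi(y)=\int f\,d\mu_y=\mu_y(\text{support-related quantity})$. To recover the product of different sets $A_1,\dotsc,A_r$, I would either invoke the natural multi-function generalization of Proposition~\ref{prop:WM-F81-prop78} (the underlying Proposition~\cite[Proposition 7.8]{F81} already allows distinct functions $f_1,\dotsc,f_\ell$), taking $f_i=\mathbf{1}_{A_i}$ and $T_i=T^{i-1}$. With these choices $\int \prod_{i=1}^r T^{(i-1)n}\mathbf{1}_{A_i}\,d\mu_y=\mu_y\bigl(A_1\cap T^{-n}A_2\cap\dotsb\cap T^{-(r-1)n}A_r\bigr)$ and $\psi_i(T^{(i-1)n}y)=\mu_{S^{(i-1)n}y}(A_i)$, since the conditional measures transform under the base map $S$ exactly as the fibers are permuted. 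This is the key translation step, and the main subtlety is verifying that the iterates $T^{(i-1)n}$ correctly push the conditional measure $\mu_y$ to $\mu_{S^{(i-1)n}y}$ under the factor map $\pi$; this is standard equivariance of the disintegration $\mu=\int\mu_y\,d\nu$ over an ergodic factor.

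Granting the multi-function version, the quantity Furstenberg bounds is precisely
\[
\Bigl|\mu_y(A_1\cap T^{-n}A_2\cap\dotsb\cap T^{-(r-1)n}A_r)-\mu_y(A_1)\mu_{S^ny}(A_2)\dotsb\mu_{S^{(r-1)n}y}(A_r)\Bigr|.
\]
Proposition~\ref{prop:WM-F81-prop78} asserts that the set of $n$ for which the $\nu$-measure of $\{y:\text{this quantity}>\varepsilon\}$ exceeds $\delta$ has density zero. Taking the complement, the set $F$ of those $n$ for which $\nu(\{y:\text{quantity}\le\varepsilon\})\ge 1-\delta$ has density one, which is exactly the desired conclusion. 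So the second step is just passing from "density zero bad set" to "density one good set" and relabeling $>\varepsilon$ as $<\varepsilon$ (absorbing the non-strict inequality by shrinking $\varepsilon$ slightly if one insists on strictness, or simply by noting the sets differ only on a negligible discrepancy).

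The main obstacle, and the only place requiring genuine care, is justifying the passage from the single-function statement of Proposition~\ref{prop:WM-F81-prop78} as quoted to the several-functions version needed here, together with the fiberwise equivariance $\pi_*$-compatibility that lets $\psi(T^{(i-1)n}y)$ be read as $\mu_{S^{(i-1)n}y}(A_i)$. If one prefers to use the quoted single-function form verbatim, I would instead apply it $r$ times with appropriately constructed functions (or apply the polynomial/multilinear extension given by \cite[Proposition 7.8]{F81} directly), obtaining for each a density-zero exceptional set; since a finite union of density-zero sets still has density zero, the intersection of the corresponding density-one sets is again density one, and on this common set all factors are simultaneously well-approximated. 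The finitely-many-intersection argument is routine once the single-tuple estimate is in hand, so the whole proposition reduces to correctly instantiating Furstenberg's lemma.
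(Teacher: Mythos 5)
Your proposal is correct and takes essentially the same route as the paper: the paper gives no separate argument, presenting the proposition simply as a restatement of Furstenberg's result (Proposition~\ref{prop:WM-F81-prop78}), which is exactly your instantiation $T_i=T^{i-1}$, $f_i=\mathbf{1}_{A_i}$ combined with the equivariance of the disintegration and the passage from a density-zero exceptional set of $n$ to a density-one good set $F$. If anything, you supply a detail the paper glosses over, namely that the quoted single-function form of Furstenberg's lemma must be upgraded to the several-functions version via \cite[Proposition 7.8]{F81} (or by intersecting finitely many density-one sets), which is the right way to close that small gap.
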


The following multiple ergodic theorem is also needed in our proof.
\begin{thm}[\mbox{\cite[Theorem7.13]{F81}}] \label{thm:F81-SZ}
If $T_1,T_2,\dotsc,T_\ell$ are commuting invertible measure preserving transformations of a
probability measure space $(X,\mathcal{B},\mu)$ and $A\in\mathcal{B}$ with $\mu(A)>0$, then
\[\liminf_{N\to\infty} \frac{1}{N}\sum_{n=1}^N
\mu(T_1^{-n}A\cap T_2^{-n}A\cap \dotsb\cap T_\ell^{-n}A)>0.\]
\end{thm}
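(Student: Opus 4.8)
The plan is to obtain this as a consequence of the Furstenberg--Zimmer structure theorem via the standard ``SZ property'' induction. Say that a system carrying commuting measure preserving transformations $T_1,\dotsc,T_\ell$ has property SZ if every $A$ with $\mu(A)>0$ satisfies the asserted positivity of the lower limit. First I would reduce to the case in which the group $G=\langle T_1,\dotsc,T_\ell\rangle$ acts ergodically: writing the ergodic decomposition $\mu=\int\mu_\omega\,d\omega$ over the $\sigma$-algebra of $G$-invariant sets, one has
\[\frac1N\sum_{n=1}^N\mu(T_1^{-n}A\cap\dotsb\cap T_\ell^{-n}A)=\int\frac1N\sum_{n=1}^N\mu_\omega(T_1^{-n}A\cap\dotsb\cap T_\ell^{-n}A)\,d\omega,\]
so, since the integrands are nonnegative and $\mu_\omega(A)>0$ on a set of $\omega$ of positive measure, Fatou's lemma shows that it suffices to establish property SZ for each ergodic component.

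For an ergodic $G$-system I would invoke the structure theorem: there is a maximal distal factor $\mathcal Z$ for which the extension $X\to\mathcal Z$ is relatively weakly mixing, and $\mathcal Z$ is built from the one-point system by a transfinite tower of compact (isometric) extensions, with inverse limits taken at limit ordinals. The proof then reduces to a transfinite induction showing that property SZ is inherited along each building block:
\begin{enumerate}
\item the one-point system has SZ trivially;
\item SZ passes to inverse limits, approximating a positive-measure $A$ from below by sets measurable over a factor occurring in the tower;
\item SZ lifts through a compact extension $X\to Y$; and
\item SZ lifts through a relatively weakly mixing extension $X\to Y$.
\end{enumerate}

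For the weakly mixing step (4) I would use Proposition~\ref{prop:WM-F81-prop78}: relative weak mixing forces the conditional expectation of $\prod_i T_i^nf_i$ over $Y$ to coincide, on average, with $\prod_i T_i^n E(f_i|Y)$, so the multicorrelation upstairs is governed by the corresponding expression on the base, and a van der Corput--type averaging transfers positivity from $Y$ to $X$. For the compact extension step (3), the fibers carry an isometric structure, so indicators are approximable by functions almost periodic over $Y$; one then selects a syndetic (in fact IP) set of times $n$ along which $T_1^nf,\dotsc,T_\ell^nf$ all return uniformly close to $f$ over the base, and combines this simultaneous return with SZ on $Y$ to manufacture the required positive-density recurrence on $X$.

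The main obstacle is step (3), the compact extension. Even for $T_i=T^i$ this already forces the full apparatus of IP-recurrence and uniform (``SZ$^{*}$'') estimates; for genuinely distinct commuting $T_1,\dotsc,T_\ell$ it is markedly harder, and is where the Furstenberg--Katznelson argument concentrates its effort, since one must control the simultaneous return of all fiber translates along a common time set while keeping every measure estimate uniform over the base, rather than iterating a single almost-periodic return. By contrast, verifying continuity of SZ at limit ordinals and confirming that the tower exhausts the maximal distal factor are routine once the two extension steps are in hand.
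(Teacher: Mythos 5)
First, a point of reference: the paper does not prove this statement at all --- it is quoted as \cite[Theorem 7.13]{F81}, i.e.\ it is the Furstenberg--Katznelson multiple recurrence theorem, used as a black box. So your proposal must be measured against Furstenberg's own proof, and there it has a genuine gap at its structural core. Your plan separates the induction into a distal tower of compact extensions below and a single relatively weakly mixing extension on top. For one transformation (the case $T_i=T^i$) this is exactly Furstenberg's architecture, but for genuinely commuting $T_1,\dotsc,T_\ell$ it conflates two inequivalent notions of relative weak mixing, and the argument breaks whichever one you choose. If ``relatively weakly mixing'' means Zimmer's notion for the whole group (no new invariant functions on $X\times_Y X$ under the diagonal $\Gamma$-action), then the structure theorem you invoke is true, but Proposition~\ref{prop:WM-F81-prop78} and the van der Corput analysis behind it are false for such extensions: they require relative weak mixing of individual elements, in particular of the differences $T_iT_j^{-1}$. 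Concretely, let $W$ be weakly mixing on $Z$, let $X=Z\times\mathbb{T}$, $T_1(z,t)=(Wz,t+\phi(z))$ with the cocycle $\phi$ chosen (classically possible) so that $T_1$ is weakly mixing, and $T_2(z,t)=(z,t+\alpha)$ with $\alpha$ irrational. These commute, the $\mathbb{Z}^2$-action is ergodic, and every common eigenfunction is in particular a $T_1$-eigenfunction, hence constant; so the action is weakly mixing as a group action and its maximal distal factor is trivial. Yet $T_2$ has pure point spectrum: applying the conclusion of \cite[Proposition 7.8]{F81} over the trivial factor to the distinct elements $T_2,T_2^{2}$ and $f_1=f_2=\cos 2\pi t$ produces the averages $\frac{1}{N}\sum_{n=1}^{N}\frac14\cos^{2}(2\pi n\alpha)\to \frac18\neq 0$. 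If instead you interpret ``relatively weakly mixing'' in the elementwise sense Furstenberg actually uses (so that your step (4) is legitimate), then the structure theorem you assert is false: the same system already sits above its trivial maximal distal factor, but that extension is not elementwise weakly mixing --- indeed here the weakly mixing part ($Z$) lies \emph{below} and a group-compact extension ($X\to Z$, an isometric circle extension) lies \emph{on top}, the opposite of your tower.

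This is exactly the gap that Furstenberg's notion of a \emph{primitive extension} is designed to fill, and it is the actual architecture of the proof of Theorem 7.13. An extension of $\mathbb{Z}^\ell$-systems is primitive if there is a subgroup $\Gamma_c\subset\Gamma$ such that the extension is compact relative to $\Gamma_c$ and weakly mixing relative to every element of $\Gamma$ outside $\Gamma_c$; every nontrivial ergodic extension admits a nontrivial primitive sub-extension; and the SZ property lifts through a primitive extension by an argument that must treat the compact directions (conditional almost periodicity plus the multidimensional van der Waerden/Gallai theorem) and the weakly mixing directions (van der Corput plus Proposition~\ref{prop:WM-F81-prop78}) \emph{simultaneously}, not as two separate stages of a tower, since the relevant subgroup $\Gamma_c$ changes from one step of the transfinite tower to the next. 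Your reduction to the ergodic case via Fatou and your inverse-limit step are fine, and your instinct that the compact directions carry the main difficulty is correct; but steps (3) and (4), as you have set them up, cannot be assembled into a tower that exhausts an arbitrary $\mathbb{Z}^\ell$-system, so for $\ell\geq 2$ the induction never gets off the ground.
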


We have following result on the weakly mixing extension.

\begin{prop} \label{prop:wm-extension}
Suppose that $(X,\mathcal{B},\mu,T)$ is ergodic and
$\pi:(X,\mathcal{B},\mu,T)\to (Y,\mathcal{D},\nu,S)$ is weakly mixing.
Let $\mu=\int \mu_yd\nu$ be the decomposition of the measure $\mu$ over $\nu$.
For $A_1,A_2,\dotsc,A_k\in\mathcal{B}$, if
$\Omega=\{y\in Y: \mu_y(A_i)>0, i=1,2,\dotsc,k\}$ has positive $\nu$-measure,
then for every $r\in\N$,
there exist $n\in \N$ and $c'>0$ such that
\[\Omega'=\{y\in Y: \mu_y(A_s)>c', s\in\{1,\dotsc,k\}^r\}\]
has positive $\nu$-measure,
where
\[A_s=A_{s(1)}\cap T^{-n}A_{s(2)}\cap\dotsb\cap T^{-(r-1)n}A_{s(r)}\]
for $s\in\{1,\dotsc,k\}^r$.
\end{prop}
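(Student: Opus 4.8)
The plan is to combine two ingredients: the relative weak-mixing estimate of Proposition~\ref{prop:WM-density-one}, which expresses the fibre measure $\mu_y(A_s)$ of an arithmetic-progression intersection as (approximately) a product of fibre measures along the orbit $y,S^ny,\dots,S^{(r-1)n}y$ in the base, and the multiple recurrence theorem on the factor, Theorem~\ref{thm:F81-SZ} applied to $(Y,\mathcal D,\nu,S)$, which forces this orbit into a prescribed positive-measure set for a large supply of $n$. (The case $r=1$ is trivial, so assume $r\ge 2$.)

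First I would uniformize the open condition defining $\Omega$. For $c>0$ put $\Omega_c=\{y\in Y:\mu_y(A_i)>c \text{ for all } i=1,\dots,k\}$. Since $\Omega_c\uparrow\Omega$ as $c\downarrow 0$ and $\nu(\Omega)>0$, I may fix $c>0$ with $\nu(\Omega_c)>0$; the gain is that on $\Omega_c$ every relevant fibre measure is bounded below by $c$ rather than merely positive. Applying Theorem~\ref{thm:F81-SZ} to $(Y,\nu)$ with the commuting invertible transformations $\mathrm{id},S,\dots,S^{r-1}$ and the set $\Omega_c$ yields
\[
\liminf_{N\to\infty}\frac1N\sum_{n=1}^N \nu\Bigl(\bigcap_{i=0}^{r-1}S^{-in}\Omega_c\Bigr)=:L>0.
\]
A routine averaging argument (using $\nu(\cdot)\le 1$) then shows that $G:=\{n\in\N:\nu(\bigcap_{i=0}^{r-1}S^{-in}\Omega_c)>L/2\}$ has positive lower density. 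Moreover, for $y\in\bigcap_{i=0}^{r-1}S^{-in}\Omega_c$ one has $S^{in}y\in\Omega_c$ for $0\le i\le r-1$, so by the equivariance $T_*\mu_y=\mu_{Sy}$ of the disintegration,
\[
\mu_y(A_{s(1)})\,\mu_{S^n y}(A_{s(2)})\cdots\mu_{S^{(r-1)n}y}(A_{s(r)})>c^r
\]
for every $s\in\{1,\dots,k\}^r$.

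Next I would invoke Proposition~\ref{prop:WM-density-one} separately for each of the $k^r$ tuples $(A_{s(1)},\dots,A_{s(r)})$, taking $\varepsilon=c^r/2$ and $\delta=L/(4k^r)$. Intersecting the resulting density-one time sets over all $s$ produces a density-one set $F\subset\N$ such that for each $n\in F$ the set $B_n$ of those $y$ for which $|\mu_y(A_s)-\mu_y(A_{s(1)})\mu_{S^ny}(A_{s(2)})\cdots\mu_{S^{(r-1)n}y}(A_{s(r)})|\ge\varepsilon$ for some $s$ satisfies $\nu(B_n)\le k^r\delta=L/4$. Since $F$ has density one while $G$ has positive lower density, I may choose $n\in F\cap G$. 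For this $n$, on the set $\bigl(\bigcap_{i=0}^{r-1}S^{-in}\Omega_c\bigr)\setminus B_n$, which has $\nu$-measure exceeding $L/2-L/4=L/4>0$, every $s$ satisfies $\mu_y(A_s)>c^r-\varepsilon=c^r/2$. Thus $c'=c^r/2$ and $\Omega'\supset(\bigcap_{i=0}^{r-1}S^{-in}\Omega_c)\setminus B_n$ meet the requirements.

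The crux is the density matching in the final step: one needs a single $n$ that is simultaneously good for the weak-mixing approximation and for the recurrence in the base. This succeeds precisely because Proposition~\ref{prop:WM-density-one} provides such $n$ in a set of density one, whose complement is negligible against any positive-density set, while Theorem~\ref{thm:F81-SZ} provides the recurrence times as a set of positive density. Getting the quantitative parameters to mesh---choosing $\varepsilon=c^r/2$ so the approximation error cannot destroy the lower bound $c^r$, and $\delta$ small relative to $L$ so the exceptional fibres do not exhaust the recurrent base set---is the only delicate bookkeeping.
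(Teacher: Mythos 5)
Your proof is correct and follows essentially the same route as the paper's: uniformize $\Omega$ to $\Omega_c$, apply Theorem~\ref{thm:F81-SZ} to $\mathrm{id},S,\dots,S^{r-1}$ on the base to obtain a positive-lower-density set of recurrence times, apply Proposition~\ref{prop:WM-density-one} with a union bound over all $k^r$ tuples to obtain a density-one set of approximation times, and pick $n$ in the intersection. The only differences are cosmetic constant choices ($\varepsilon=c^r/2$, $c'=c^r/2$ versus the paper's $c^r/10$ and $9c^r/10$), plus your harmless (and actually unnecessary) appeal to equivariance of the disintegration where the definition of $\Omega_c$ alone suffices.
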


\begin{proof}
Since $\Omega=\bigcup\limits_{n=1}^\infty \{y\in Y: \mu_y(A_i)>1/n, i=1,2,\dotsc,k\}$
has positive $\nu$-measure, there exists $c>0$ such that
\begin{align*}
\Omega_c:=\{y\in Y: \mu_y(A_i)>c, i=1,2,\dotsc,k\}
\end{align*}
has positive $\nu$-measure.

Fix $r\in\N$. First, applying Theorem~\ref{thm:F81-SZ} to $id, S, S^2, \dotsc, S^{r-1}$ and $\Omega_c$,
there exists $\lambda>0$ such that
\[\liminf_{N\to\infty} \frac{1}{N}\sum_{n=0}^{N-1} \nu(\Omega_c\cap S^{-n}(\Omega_c)\cap \dotsb\cap S^{-(r-1)n}(\Omega_c))>\lambda>0.\]
Then there exists a subset $H$ of $\N$ with positive lower density such that for every $n\in H$,
\[\nu(\Omega_c\cap S^{-n}(\Omega_c)\cap \dotsb\cap S^{-(r-1)n}(\Omega_c))>\lambda/2.\]

Fix $\varepsilon=c^r/10$ and choose $0<\delta <\lambda/2$.
For any $s \in\{1,2,\dotsc,k\}^r$, by Proposition \ref{prop:WM-density-one},
there exists a subset $F_s\subset \N$ with density one
 such that for any $n\in F_s$
\begin{align*}
  \nu(\{y\in Y\colon &|\mu_y(A_{s(1)}\cap T^{-n}A_{s(2)}\cap\dotsb\cap T^{-(r-1)n}A_{s(r)})\\
&\quad - \mu_y(A_{s(1)})\mu_{S^ny}(A_{s(2)})\dotsb\mu_{S^{(r-1)n}y}(A_{s(r)})|< \frac{c^r}{10}\})\geq 1-\frac{\delta}{k^r}.
\end{align*}
Let $F=\bigcap_s F_s$.
Then $F$ also has density one.
For every $n\in F$, one has
\begin{align*}
\nu(\{y\in Y\colon &|\mu_y(A_{s(1)}\cap T^{-n}A_{s(2)}\cap\dotsb\cap T^{-(r-1)n}A_{s(r)})\\
&\quad - \mu_y(A_{s(1)})\mu_{S^ny}(A_{s(2)})\dotsb\mu_{S^{(r-1)n}y}(A_{s(r)})|<\frac{c^r}{10},
s\in\{1,2,\dotsc,k\}^r\}) \geq 1-\delta.
\end{align*}
It is clear that $F\cap H\neq\emptyset$.
Pick $n\in F\cap H$. Let
\begin{align*}
  \Omega'=&\{y\in Y\colon |\mu_y(A_{s(1)}\cap T^{-n}A_{s(2)}\cap\dotsb\cap T^{-(r-1)n}A_{s(r)})\\
&\qquad\qquad- \mu_y(A_{s(1)})\mu_{S^ny}(A_{s(2)})\dotsb\mu_{S^{(r-1)n}y}(A_{s(r)})|<\frac{c^r}{10}, s\in\{1,2,\dotsc,k\}^r\}\\
&\cap\Omega_c\cap S^{-n}(\Omega_c)\cap \dotsb\cap S^{-(r-1)n}(\Omega_c).
\end{align*}
and $c'=9c^r/10$.
Then $\nu(\Omega')>(1-\delta)+\lambda/2-1=\lambda/2-\delta>0$.

Fix $y\in \Omega'$. Then
\[
y\in\Omega_c, S^ny\in\Omega_c,\dotsc, S^{(r-1)n}y\in \Omega_c.
\]
For any $s\in\{1,2,\dotsc,k\}^r$,
one has
\[\mu_y(A_{s(1)})\mu_{S^ny}(A_{s(2)})\dotsb\mu_{S^{(r-1)n}y}(A_{s(r)})>c^r.\]
Then
\begin{align*}
  &\hskip0.5cm \mu_y(A_{s(1)}\cap T^{-n}A_{s(2)}\cap\dotsb\cap T^{-(r-1)n}A_{s(r)})\\
  &\geq\mu_y(A_{s(1)})\mu_{S^ny}(A_{s(2)})\dotsb\mu_{S^{(r-1)n}y}(A_{s(r)})-\frac{c^r}{10}\\
  &> c^r-\frac{c^r}{10}=c'.
\end{align*}
This ends the proof.
\end{proof}

The following technical lemma is also needed.
\begin{lem} \label{lem:to-small-sets}
Suppose that $(X,\mathcal{B},\mu,T)$ is ergodic and
$\pi:(X,\mathcal{B},\mu,T)\to (Y,\mathcal{D},\nu,S)$ is weakly mixing.
Let $\mu=\int \mu_yd\nu$ be the decomposition of the measure $\mu$ over $\nu$.
Suppose that $A_1,A_2,\dotsc,A_k\in\mathcal{B}$ and
\[\Omega=\{y\in Y: \mu_y(A_i)>c, i=1,2,\dotsc,k\}\]
has positive $\nu$-measure.
If $A_1=A_{1,1}\cup A_{1,2}$. Then there exist $j\in\{1,2\}$ and $c'>0$ such that
\[\Omega'=\{y\in Y\colon \mu_y(A_{1,j})>c', \mu_y(A_i)>c', i=2,\dotsc,k\}\]
has positive $\nu$-measure.
\end{lem}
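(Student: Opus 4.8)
The plan is to reduce the statement to elementary subadditivity of the conditional measures together with a pigeonhole argument, the only genuinely nontrivial ingredient being the measurability of the relevant level sets. First I would observe that since $A_1=A_{1,1}\cup A_{1,2}$, finite subadditivity of each conditional measure $\mu_y$ gives $\mu_y(A_1)\le \mu_y(A_{1,1})+\mu_y(A_{1,2})$ for every $y\in Y$. Hence for each $y\in\Omega$, where $\mu_y(A_1)>c$, we must have $\mu_y(A_{1,1})+\mu_y(A_{1,2})>c$, and therefore at least one of $\mu_y(A_{1,1})$ and $\mu_y(A_{1,2})$ strictly exceeds $c/2$ (if both were $\le c/2$ their sum would be at most $c$).

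Next I would split $\Omega$ accordingly. Set $c'=c/2$ and define
\[\Omega_j=\{y\in\Omega\colon \mu_y(A_{1,j})>c'\}\quad (j=1,2).\]
By the previous step $\Omega=\Omega_1\cup\Omega_2$. Since the maps $y\mapsto\mu_y(B)$ are $\mathcal{D}$-measurable for every $B\in\mathcal{B}$ (a standard property of the disintegration $\mu=\int\mu_y\,d\nu$), each $\Omega_j$ belongs to $\mathcal{D}$, so finite subadditivity of $\nu$ yields $\nu(\Omega)\le\nu(\Omega_1)+\nu(\Omega_2)$. As $\nu(\Omega)>0$, at least one index $j\in\{1,2\}$ satisfies $\nu(\Omega_j)>0$; fix such a $j$.

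Finally I would verify that this $j$ and $c'=c/2$ do the job. For every $y\in\Omega_j$ one has $\mu_y(A_{1,j})>c'$ by construction, and $\mu_y(A_i)>c>c'$ for $i=2,\dots,k$ because $\Omega_j\subset\Omega$. Thus $\Omega_j\subset\Omega'$, whence $\nu(\Omega')\ge\nu(\Omega_j)>0$, which is exactly the assertion.

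The hard part, such as it is, is purely bookkeeping: confirming that $y\mapsto\mu_y(A_{1,1})$ and $y\mapsto\mu_y(A_{1,2})$ are measurable so that the sets $\Omega_j$ are legitimate elements of $\mathcal{D}$, and keeping the strict inequalities straight. I expect no dynamical input to be required here; in particular neither ergodicity nor the weak-mixing hypothesis on $\pi$ enters the argument. They are inherited from the ambient setup of the section, but this splitting lemma is a soft measure-theoretic fact that mirrors, at the level of conditional measures, the combinatorial splitting in Lemma~\ref{lem3.33}.
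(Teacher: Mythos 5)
Your proof is correct, and it takes a genuinely different (and simpler) route than the paper's. The paper never splits $\Omega$ pointwise: it instead integrates, observing that
\[
\int \Bigl[\bigl(\mu_y(A_{1,1})+\mu_y(A_{1,2})\bigr)\prod_{i=2}^k \mu_y(A_i)\Bigr]\,d\nu(y)
\;\geq\; \int \prod_{i=1}^k \mu_y(A_i)\,d\nu(y)\;\geq\; c^k\,\nu(\Omega)>0,
\]
so that one of the two integrals $\int \mu_y(A_{1,j})\prod_{i=2}^k\mu_y(A_i)\,d\nu(y)$ must be positive; this yields a positive-measure set $\Omega_0$ on which the relevant conditional measures are merely positive, and the uniform bound $c'$ is then recovered by a second step, exhausting $\Omega_0$ by the level sets $\{\mu_y(A_{1,j})>1/n,\ \mu_y(A_i)>1/n,\ i=2,\dotsc,k\}$ and choosing $n_0$ with $\nu(\Omega_{n_0})>0$. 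You replace all of this by the pointwise subadditivity $\mu_y(A_{1,1})+\mu_y(A_{1,2})\geq\mu_y(A_1)>c$, which forces $\mu_y(A_{1,j})>c/2$ for some $j$ at each $y\in\Omega$, followed by a pigeonhole on $\nu(\Omega_1)+\nu(\Omega_2)\geq\nu(\Omega)>0$. What your version buys: an explicit constant $c'=c/2$ (the paper's $c'$ is non-constructive), no integration, and no limiting/exhaustion argument; both proofs need only the $\mathcal{D}$-measurability of $y\mapsto\mu_y(B)$, and, as you correctly note, neither uses ergodicity nor the weak-mixing hypothesis — these are just carried along from the ambient setting. One small imprecision: the disintegration $\mu=\int\mu_y\,d\nu$ is defined only for $\nu$-a.e.\ $y$, so your ``for every $y\in Y$'' should read ``for $\nu$-a.e.\ $y$''; this is harmless, since discarding a $\nu$-null set affects neither the decomposition $\Omega=\Omega_1\cup\Omega_2$ up to null sets nor the positivity of $\nu(\Omega_j)$.
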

\begin{proof}
First, we have
\begin{align*}
&\hskip0.6cm \int \left[\mu_y(A_{1,1})\prod_{i=2}^k
   \mu_y(A_i)\right] d\nu(y)+\int \left[\mu_y(A_{1,2})\prod_{i=2}^k
   \mu_y(A_i)\right] d\nu(y)  \\
&=\int \left[(\mu_y(A_{1,1})+\mu_y(A_{1,2}))\prod_{i=2}^k
   \mu_y(A_i)\right] d\nu(y)\\
&\geq
\int \prod_{i=1}^k \mu_y(A_i) d\nu(y)\geq c^k\nu(\Omega)>0.
\end{align*}
Without loss of generality, assume that
\[\int \left[\mu_y(A_{1,1})\prod_{i=2}^k \mu_y(A_i)\right] d\nu(y)>0\]
Then there exists a subset $\Omega_0$ of $Y$ with positive $\nu$-measure such that
for every  $y\in \Omega_0$,
\[\mu_y(A_{1,1})>0, \mu_y(A_{i})>0, i=2,\dotsc,k.\]
For every $n\in\N$,
put
\[\Omega_n=\{y\in \Omega_0\colon \mu_y(A_{1,1})>1/n, \mu_y(A_{i})>1/n, i=2,\dotsc,k\}.\]
Clearly,
\[\bigcup_{n\in\N}\Omega_n=\Omega_0.\]
 Then there exists $n_0\in\N$ such that $\nu(\Omega_{n_0})>0$.
Set $c=1/n_0$ and $\Omega'=\Omega_{n_0}$, which are as required.
\end{proof}

After these preparations, we are ready to prove Theorem C. In fact we show a litter more, namely
the $\Delta$-weak mixing set exists in  some fibre of $\pi$, where $\pi$
is the factor map to the maximal measurable distal factor.
\begin{proof}[Proof of Theorem C]
By the Furstenberg-Zimmer theorem \cite{F81,Z76,Z76-2}, the ergodic system
$(X,\mathcal{B},\mu,T)$ admits a maximal measurable distal factor
$\pi:(X,\mathcal{B},\mu,T)\to (Y,\mathcal{D},\nu,S)$
with the factor map $\pi$ being a weakly mixing extension.
Let $\mu=\int \mu_yd\nu$ be the decomposition of the measure $\mu$ over $\nu$.
Let $Y_0:=\{y\in Y:\supp \mu_y\subset \supp \mu\}$. Then $\nu(Y)=1$.
For $n\geq 2$, put $\lambda_n=\mu \times_Y\times_Y\dotsb\times_Y \mu$ ($n$-times).
Fix a non-diagonal tuple $(x_1,x_2,\dotsc,x_n)\in \mathrm{\supp}(\lambda_n)$.
Without loss of generality, assume that $x_1,x_2,\dotsc,x_n$ are pairwise different.
Fix a closed neighbourhood $V_i$ of $x_i$ such that $V_i\cap V_j=\emptyset$ for $i\neq j$.
We will show that there is a $\Delta$-weakly mixing set
$A\in \langle V_1,\dotsc, V_n\rangle$ in the fiber of $\pi$.

As $\lambda_n(V_1\times V_2\times\dotsb\times V_n)>0$, there exists $c_1>0$
such that the set
\[\{y\in Y_0\colon \mu_y(V_i)>c_1, i=1,2,\dotsc,n\}\]
has positive $\nu$-measure.
Since $(Y,\mathcal{D},\nu,S)$ is a Lebesgue space, we fix a separated metric on $Y$ such that
every open set is $\mathcal{D}$-measurable.
Then we pick $C_1\subset\{y\in Y_0: \mu_y(V_i)>c_1, i=1,2,\dotsc,n \}$ with $\diam(C_1)<1$ and $\nu(C_1)>0.$
Set $A_i=V_i\cap\supp(\mu)\cap \pi^{-1}(C_1)$ for $i=1,2,\dotsc,n$.
Then the set
\[\{y\in Y_0\colon \mu_y(A_i)>c_1, i=1,2,\dotsc,n\}\]
has positive $\nu$-measure.

Let $\mathcal{E}_1=\{1,2,\dotsc,n\}$,
$\mathcal{E}_2=\mathcal{E}_1\times \mathcal{E}_1=\{1,2,\dotsc,n\}^2$,
and
\[\mathcal{E}_{k}=\mathcal{E}_{k-1}\times \mathcal{E}_{k-1} \times \dotsb\times \mathcal{E}_{k-1}\ (k\text{ times})
=\{1,2,\dotsc,n\}^{k!}.\]
We shall construct, via induction on $k$, non-empty closed subsets $A_\sigma$ for $\sigma\in \mathcal{E}_k$
with the following properties:
\begin{enumerate}
  \item when $k=1$, $A_\sigma=A_{\sigma(1)}$ for $\sigma\in \mathcal{E}_1$.
  \item when $k\geq 2$, there exist $n_k\in\N$, $C_k\subset Y$ such that $\nu(C_k)>0$ and
  for any $\sigma=\sigma_1\sigma_2\dotsb\sigma_k\in\mathcal{E}_k$, where $\sigma_i\in\mathcal{E}_{k-1}$ for
  $i=1,2,\dotsc,k$, one has
  \[A_\sigma\subset A_{\sigma_1}\cap T^{-n_k} A_{\sigma_2}\cap \dotsb\cap T^{-(k-1)n_k} A_{\sigma_k}\cap \supp(\mu)\cap\pi^{-1}(C_k).\]
  \item when $k \geq 2, \diam(A_\sigma) \leq 2^{-k}$ for all $\sigma\in\mathcal{E}_k$ and $\diam C_k<1/k$.
  \item for every $k\geq 1$, there exists $c_k>0$ such that
  the set
  \[\{y\in Y\colon \mu_y(A_\sigma)>c_k, \sigma\in\mathcal{E}_k\}\]
  has positive $\nu$-measure.
\end{enumerate}

Suppose that we have constructed all the sets $A_\sigma$.
Let
\[A=\bigcap_{k=1}^\infty\bigcup_{\sigma\in\mathcal{E}_k} A_\sigma.\]
Similar to the proof of Theorem B, we know that $A$ is $\Delta$-weakly mixing.
Since $A\subset \bigcap\limits_{k\in\mathbb N}\pi^{-1}(C_k)$,
we get that $\pi(A)\subset C_k$ for each $k\in\N$.
Since $\diam(C_k)\to0$, $k\to\infty$,
we know that $\diam(\pi(A))=0$ which implies that there exists $y_0\in Y$ such that $A\subset \pi^{-1}(y_0)$.

We now construct the $A_\sigma$.
Define $A_\sigma$ for $\sigma\in \mathcal{E}_1$  according to property (1).
By the construction of $A_i$, property (4) is satisfied form $k=1$.

Assume that we have constructed $A_\sigma$ for
all $\sigma\in\mathcal{E}_j$ and $j = 1,\dotsc,k$ with the above properties.
Let $n_{k+1}\in\N$ and $c'>0$ be in Proposition \ref{prop:wm-extension} by applying the collection
$\{A_\sigma\colon \sigma\in\mathcal{E}_k\}$ and $r=k+1$.
For any $\sigma=\sigma_1\sigma_2\dotsb\sigma_{k+1}\in\mathcal{E}_{k+1}$,
where $\sigma_i\in\mathcal{E}_{k}$ for $i=1,2,\dotsc,k+1$,
put
\[B_\sigma= A_{\sigma_1}\cap T^{-n_k} A_{\sigma_2}\cap \dotsb\cap T^{-kn_k} A_{\sigma_{k+1}}.\]
Then the set
\[\{y\in Y\colon \mu_y(B_\sigma)>c', \sigma\in \mathcal{E}_{k+1}\}\]
has positive $\nu$-measure.
Choose $C_k\subset\{y\in Y_0:\mu_y(B_\sigma)>c', \sigma\in \mathcal{E}_{k+1}\}$
such that $\diam(C_k)<1/k,\nu(C_k)>0$ and
\[\{y\in Y\colon \mu_y(B_\sigma\cap\supp(\mu)\cap\pi^{-1}(C_k))>c', \sigma\in \mathcal{E}_{k+1}\}\]
has positive $\nu$-measure.
By Lemma \ref{lem:to-small-sets}, there exists $c_{k+1}>0$ and
closed subsets $A_\sigma\subset B_\sigma\cap \supp(\mu)\cap\pi^{-1}(C_k)$
with $\diam(A_\sigma)<2^{-(k+1)}$ for all $\sigma\in\mathcal{E}_{k+1}$
such that
the set
\[\{y\in Y\colon \mu_y(A_\sigma)>c_{k+1}, \sigma\in \mathcal{E}_{k+1}\}\]
has positive $\nu$-measure.
Consequently, properties (2-4) are also satisfied for $k+1$.
This completes the induction procedure and hence the proof of the theorem.
\end{proof}

\begin{rem}
In the proof of Theorem C, we show that there are many $\Delta$-weak mixing sets in the fiber of $\pi$.
In fact, we conjecture that for a.e.\ $y\in Y$, $\pi^{-1}(y)$ contains a $\Delta$-weak mixing set.
\end{rem}

\section{Non-classical Li-Yorke chaos}
In this section, we study the non-classical Li-Yorke chaos.
We will answer affirmatively three open questions stated in \cite{M12} and generalize some results
obtained in \cite{HY02, AGHSY} in this section. To do this we start from some notions.

Following the idea in \cite{LY75}, we usually define the Li-Yorke chaos as follows.
Let $(X,T)$ be a dynamical system.
A pair $(x,y) \in X\times X$ is called {\it scrambled} if
\[\liminf_{n\to\infty}\rho(T^nx,T^ny)=0\text{ and }\limsup_{n\to\infty}\rho(T^nx,T^ny)>0.\]
A subset $C$ of $X$ is called {\it scrambled} if any
two distinct points $x,y\in C$ form a scrambled pair.
The dynamical system $(X,T)$ is called {\it Li-Yorke chaotic} if there is an uncountable scrambled set in $X$.
In 2002  Huang and Ye \cite{HY02} showed that Devaney chaos implies Li-Yorke one
by proving that a non-periodic transitive system with a periodic point is Li-Yorke chaotic.
Also in 2002, Blanchard, Glasner, Kolyada and Maass \cite{BGKM02} proved that
positive topological entropy also implies Li-Yorke chaos.

Among other things, Moothathu \cite{M12} introduced a concept of non-classical Li-Yorke chaos.
For two positive integers $r,s\in\N$, we say that a pair $(x,y) \in X\times X$ is {\it $(r,s)$-scrambled}
if
\[\liminf_{n\to\infty} \rho(T^{rn}x,T^{sn}y)=0\text{ and }\limsup_{n\to\infty} \rho(T^{rn}x,T^{sn}y)>0.\]
Similarly, we can define {\it $(r,s)$-scrambled sets} and {\it $(r,s)$-Li-Yorke chaos}.
The classical Li-Yorke chaos is just $(1,1)$-Li-Yorke chaos.
As we know positive topological entropy implies Li-Yorke chaos,
Moothathu proposed the following natural question in \cite{M12}.
\begin{ques}\label{q-1}
Does positive topological entropy imply the existence of plenty of $(r,s)$-scrambled pairs
for two distinct integers $r,s\in\N$?
\end{ques}

Using the results developed in previous sections, we have a positive answer to this question.
In fact, we first show that the existence of $\Delta$-mixing sets implies $(r,s)$-Li-Yorke chaos for any $r,s\in\N$.

\begin{thm}\label{thm:Delta-WM-scrambed}
Let $(X,T)$ be a dynamical system.
If $E$ is a $\Delta$-weakly mixing set, then there exists a dense Mycielski subset $C$ of $E$ such that
$C$ is $(r,s)$-scrambled sets for any $r,s\in\N$.
\end{thm}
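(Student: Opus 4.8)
The plan is to take for $C$ the very set produced by Theorem~\ref{thm:Delta-weakly-mixing-set} and to verify directly that any two of its distinct points form an $(r,s)$-scrambled pair for every $r,s$, so that no separate appeal to Mycielski's theorem is needed. Since $E$ is $\Delta$-weakly mixing, the necessity direction of Theorem~\ref{thm:Delta-weakly-mixing-set} supplies an increasing sequence of Cantor sets $C_1\subset C_2\subset\dotsb$ of $E$ whose union $C=\bigcup_{k=1}^\infty C_k$ is dense in $E$ and satisfies conditions (1) and (2) of that theorem. By definition $C$ is a dense Mycielski subset of $E$, so it remains only to check the scrambling. Here condition (1)---which lets one steer finitely many orbits to prescribed continuous targets along a common sequence of times---will do all the work; condition (2) is not needed.

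Fix distinct $x,y\in C$ and $r,s\in\N$, and set $d=\max\{r,s\}$. For the proximality half I would apply condition (1) to the two-point subset $A=\{x,y\}\subseteq C$ (on which every map is automatically continuous) with continuous maps $g_1,\dotsc,g_d\colon A\to E$ chosen so that $g_r(x)=g_s(y)=p$ for some fixed $p\in E$, the remaining values being irrelevant. This yields an increasing sequence $\{q_i\}$ with $T^{r q_i}x\to p$ and $T^{s q_i}y\to p$, whence $\rho(T^{r q_i}x,T^{s q_i}y)\to 0$ and therefore $\liminf_{n\to\infty}\rho(T^{rn}x,T^{sn}y)=0$. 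This is unaffected by whether $r=s$, since $g_r$ is permitted to be constant on $\{x,y\}$.

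For the sensitivity half I would use that $E$ has at least two points (indeed it is perfect), so I may fix distinct $p_1,p_2\in E$ and put $\eta=\rho(p_1,p_2)>0$. Applying condition (1) once more to $A=\{x,y\}$ with continuous maps satisfying $g_r(x)=p_1$ and $g_s(y)=p_2$ (legitimate even when $r=s$, as $x\neq y$), I obtain an increasing sequence $\{q_i'\}$ with $T^{r q_i'}x\to p_1$ and $T^{s q_i'}y\to p_2$, so that $\rho(T^{r q_i'}x,T^{s q_i'}y)\to\eta$ and hence $\limsup_{n\to\infty}\rho(T^{rn}x,T^{sn}y)\ge\eta>0$. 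Combining the two halves shows $(x,y)$ is $(r,s)$-scrambled; as $x,y,r,s$ were arbitrary, $C$ is an $(r,s)$-scrambled set for every $r,s\in\N$.

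Because the heavy lifting is entirely contained in Theorem~\ref{thm:Delta-weakly-mixing-set}, there is no genuine obstacle beyond bookkeeping. The one point that deserves care is the observation that the two defining conditions of an $(r,s)$-scrambled pair, namely $\liminf=0$ and $\limsup>0$, may be realised \emph{independently}, via two separate applications of condition (1) with different target functions, since nothing requires a single sequence of times to witness both. One could alternatively route the argument through a dense $G_\delta$ relation in $E\times E$ together with Mycielski's theorem, but the direct verification above already exhibits the required dense Mycielski set.
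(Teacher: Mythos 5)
Your proposal is correct and follows essentially the same route as the paper's own proof: both take $C$ directly from Theorem~\ref{thm:Delta-weakly-mixing-set} and verify the $(r,s)$-scrambling of each pair of distinct points of $C$ by two applications of condition (1) to the two-point set $\{x,y\}$ --- once with coinciding targets to get $\liminf_{n\to\infty}\rho(T^{rn}x,T^{sn}y)=0$, and once with distinct targets to get $\limsup_{n\to\infty}\rho(T^{rn}x,T^{sn}y)>0$. The paper likewise makes no separate appeal to Mycielski's theorem and does not use condition (2), so there is nothing to add.
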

\begin{proof}
By Theorem~\ref{thm:Delta-weakly-mixing-set},
there exists a countable Cantor subsets $C_1\subset C_2\subset \dotsb$ of $E$
such that $C=\bigcup_{k=1}^\infty C_k$
is dense in $E$ and satisfies relevant conditions.
Then $C$ is a dense Mycielski subset of $E$.
It  is sufficient to show that $C$ is $(r,s)$-scrambled sets for any $r,s\in\N$.

Fix two positive integers $r,s\in\N$ and two distinct points $a,b\in E$.
For every two distinct points $x,y\in C$,
define $g_r\colon \{x,y\} \to E$ by $g_r(x)=a,g_r(y)=a$ and $g_s=g_r$.
By the conclusions of Theorem~\ref{thm:Delta-weakly-mixing-set},
there exists an increasing sequence $\{q_k\}_{k=1}^\infty$ of positive integers such that
  \[\lim_{i\to\infty}T^{r\cdot q_i}x=g_r(x)=a \text{ and }\lim_{i\to\infty}T^{s\cdot q_i}y=g_s(y)=a,\]
which implies that
\[\liminf_{n\to\infty} \rho(T^{rn}x,T^{sn}y)=0.\]
Now define $h_r\colon \{x,y\} \to E$ by $h_r(x)=a,h_r(y)=b$ and $h_s=h_r$.
By the conclusions of Theorem~\ref{thm:Delta-weakly-mixing-set} again,
there exists an increasing sequence $\{p_k\}_{k=1}^\infty$ of positive integers such that
  \[\lim_{i\to\infty}T^{r\cdot p_i}x=h_r(x)=a \text{ and }\lim_{i\to\infty}T^{s\cdot p_i}y=h_s(y)=b,\]
which implies that
\[\limsup_{n\to\infty} \rho(T^{rn}x,T^{sn}y)\geq \rho(a,b)>0.\]
Thus $(x,y)$ is $(r,s)$-scrambled, which ends the proof.
\end{proof}

Combining Theorems \ref{thm:positive-entopy-Delta-WM} and \ref{thm:Delta-WM-scrambed}, we have
\begin{thm}
Positive topological entropy implies   $(r,s)$-Li-Yorke chaos for any $r,s\in\N$.
\end{thm}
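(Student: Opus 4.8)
The plan is to deduce the theorem directly from the two results just quoted, since together they do essentially all of the work. First I would check that positive topological entropy forces the collection of entropy sets to be non-empty. When $h(T)>0$ there exist entropy pairs by the classical theory of local entropy, and a pair $(x_1,x_2)$ with $x_1\neq x_2$ is an entropy $2$-tuple, so by the Lemma characterizing entropy sets the two-point set $\{x_1,x_2\}$ is an entropy set. Hence $E_s(X,T)\neq\emptyset$ and therefore $H(X,T)\neq\emptyset$. This is the only place where the hypothesis of positive entropy is genuinely used.

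Next, Theorem~\ref{thm:positive-entopy-Delta-WM} applies and tells us that $H(X,T)\cap\Delta\text{-}WM(X,T)$ is a dense $G_\delta$ subset of $H(X,T)$. Since it is dense in the non-empty space $H(X,T)$, it is itself non-empty; in particular $(X,T)$ admits at least one $\Delta$-weakly mixing set $E$. I would then invoke Theorem~\ref{thm:Delta-WM-scrambed} for this $E$: it produces a single dense Mycielski subset $C\subset E$ which is simultaneously $(r,s)$-scrambled for \emph{every} pair $r,s\in\N$. The important feature here is that one and the same set $C$ serves uniformly for all choices of $(r,s)$, so no further diagonalization over the parameters is required.

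Finally, a Mycielski set is by definition a union of countably many Cantor sets, hence uncountable. Thus, for each fixed $r,s\in\N$, the set $C$ is an uncountable $(r,s)$-scrambled set, which is exactly the statement that $(X,T)$ is $(r,s)$-Li-Yorke chaotic; as $r,s$ were arbitrary, the proof is complete. I do not expect any serious obstacle at this stage: the combinatorial and measure-theoretic difficulty has already been absorbed into Theorems~\ref{thm:positive-entopy-Delta-WM} and~\ref{thm:Delta-WM-scrambed}, and the remaining argument is a short concatenation of their conclusions together with the elementary observation that $H(X,T)$ is non-empty under positive entropy.
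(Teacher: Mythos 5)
Your proof is correct and follows essentially the same route as the paper, which obtains the theorem precisely by combining Theorem~\ref{thm:positive-entopy-Delta-WM} with Theorem~\ref{thm:Delta-WM-scrambed}. The only detail you add beyond the paper's one-line derivation is the (correct) verification that positive entropy makes $H(X,T)$ non-empty, via entropy pairs giving two-point entropy sets, so that the dense $G_\delta$ set of $\Delta$-weakly mixing sets is indeed non-empty.
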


For convenience, we define the $(r,s)$-proximal relation and the $(r,s)$-asymptotic relation as
\begin{align*}
Prox(T^r,T^s)=\bigl\{(x,y)\in X^2\colon \liminf_{n\to\infty} \rho(T^{rn}x,T^{sn}y)=0\bigr\},\\
Asy(T^r,T^s)=\bigl\{(x,y)\in X^2\colon \lim_{n\to\infty} \rho(T^{rn}x,T^{sn}y)=0\bigr\}.
\end{align*}
It is clear that a pair $(x,y)\in X^2$ is $(r,s)$-scrambled if and only if
 $(x,y)\in Prox(T^r,T^s)\setminus Asy(T^r,T^s)$.
We also have the following observation.
\begin{lem}\label{lem:Prox-asy}
Let $(X,T)$ be a dynamical system and $k\in\N$.
Then $Prox(T,T)=Prox(T^k,T^k)$ and $Asy(T,T)=Asy(T^k,T^k)$.
\end{lem}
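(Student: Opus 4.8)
The plan is to establish each equality by proving its two inclusions separately; in both cases one inclusion is immediate from passing to a subsequence, and the work lies in the other. The single ingredient that drives everything is uniform continuity: since $X$ is compact, each of the finitely many iterates $T^0,T^1,\dotsc,T^{k-1}$ is uniformly continuous, so there is a common modulus of continuity, namely for every $\varepsilon>0$ there is $\delta>0$ such that $\rho(a,b)<\delta$ implies $\rho(T^t a,T^t b)<\varepsilon$ for all $0\leq t\leq k-1$. I would state this once at the outset and reuse it.

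For the asymptotic relation, the inclusion $Asy(T,T)\subset Asy(T^k,T^k)$ is trivial, since if $\rho(T^m x,T^m y)\to 0$ then in particular this holds along the subsequence $m=kn$. For the converse, writing an arbitrary $m$ as $m=kn+t$ with $0\leq t\leq k-1$, I would use $\rho(T^m x,T^m y)=\rho(T^t(T^{kn}x),T^t(T^{kn}y))$ together with the common modulus: given $\varepsilon$, choose $N$ so that $\rho(T^{kn}x,T^{kn}y)<\delta$ for all $n\geq N$; then $\rho(T^m x,T^m y)<\varepsilon$ for every $m\geq kN$, so $\rho(T^m x,T^m y)\to 0$ and $(x,y)\in Asy(T,T)$ forces $(x,y)\in Asy(T^k,T^k)$.

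For the proximal relation, $Prox(T^k,T^k)\subset Prox(T,T)$ is again immediate, because a sequence of times $kn_i$ realizing $\rho(T^{kn_i}x,T^{kn_i}y)\to 0$ is in particular a subsequence of all times. The substance is the reverse inclusion $Prox(T,T)\subset Prox(T^k,T^k)$. Here I would start from $n_i\to\infty$ with $\rho(T^{n_i}x,T^{n_i}y)\to 0$ and observe, for each fixed $t\in\{0,1,\dotsc,k-1\}$, that uniform continuity gives $\rho(T^{n_i+t}x,T^{n_i+t}y)\to 0$ as $i\to\infty$. For each $i$ there is a unique $t_i\in\{0,\dotsc,k-1\}$ with $n_i+t_i\equiv 0\pmod k$; by the pigeonhole principle some value $t$ occurs for infinitely many $i$. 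Restricting to that subsequence, $n_i+t=k m_i$ and $\rho(T^{k m_i}x,T^{k m_i}y)\to 0$, whence $\liminf_{n\to\infty}\rho(T^{kn}x,T^{kn}y)=0$ and $(x,y)\in Prox(T^k,T^k)$.

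The one subtle point I would flag is precisely this reverse proximal inclusion. A naive attempt to pass to convergent subsequences $T^{k m_i}x\to p$ and $T^{k m_i}y\to q$ only yields $T^{r}p=T^{r}q$ for the residue $r$, which does \emph{not} force $p=q$ when $T$ is noninvertible; this is the genuine obstacle. The pigeonhole-plus-uniform-continuity argument above avoids it entirely by pushing the proximal times \emph{forward} onto a multiple of $k$ rather than trying to pull limit points backward, and it requires nothing beyond compactness of $X$ and continuity of $T$.
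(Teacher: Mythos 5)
Your proof is correct. Note that the paper offers no proof at all for this lemma --- it is stated as an ``observation'' immediately after the definitions of $Prox$ and $Asy$ --- so there is no argument of the authors to compare against; your write-up supplies exactly the details they treated as routine. Both halves are sound: the two subsequence inclusions are indeed immediate, and the two substantive inclusions follow from the common modulus of uniform continuity for $T^0,T^1,\dotsc,T^{k-1}$ precisely as you describe. In particular, the pigeonhole step that pushes a proximal sequence of times forward onto a multiple of $k$ is the right device, and your closing remark is apt: the naive alternative of extracting convergent subsequences $T^{km_i}x\to p$, $T^{km_i}y\to q$ only yields $T^rp=T^rq$ for some residue $r$, which does not give $p=q$ when $T$ is not injective. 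One cosmetic slip worth fixing: the final clause of your asymptotic paragraph has the two sets interchanged --- the argument you give shows that $(x,y)\in Asy(T^k,T^k)$ forces $(x,y)\in Asy(T,T)$, not the reverse (the reverse being the trivial inclusion you had already disposed of).
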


It was shown in \cite{HY02} that a non-periodic transitive system with a periodic point is
Li-Yorke chaotic. Moreover, it was proved in \cite{AGHSY} that if $(X,T)$ is transitive and there is
a subsystem $(Y,T)$ such that $(X\times Y, T\times T)$ is transitive, then $(X,T)$ is Li-Yorke chaotic.
We will extend the results to non-classical Li-Yorke chaos under the total transitivity assumption.
To this aim, first we need some results concerning the $(r,s)$-proximal relation and the $(r,s)$-asymptotic relation.

\begin{lem}[\mbox{\cite[Corollary 2.2]{HY02}}]\label{thm:asy}
If $X$ is infinite and $T$ is transitive, then $Asy(T,T)$ is of first category in $X^2$.
\end{lem}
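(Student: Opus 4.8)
The plan is to split on the dichotomy between sensitivity and almost equicontinuity for transitive systems. Since $X$ is infinite and $(X,T)$ is transitive, $X$ is perfect (a transitive system is a single periodic orbit or perfect), so the diagonal $\Delta=\{(x,x):x\in X\}$ is nowhere dense in $X\times X$. For $k,N\in\N$ put $C_{k,N}=\{(x,y)\in X^2:\rho(T^nx,T^ny)\le 1/k\text{ for all }n\ge N\}$, a closed set; if $\rho(T^nx,T^ny)\to0$ then eventually $\rho(T^nx,T^ny)\le 1/k$, so $Asy(T,T)\subseteq\bigcup_{N\ge1}C_{k,N}$ for every $k$. It thus suffices either to exhibit a single $k$ for which $\bigcup_N C_{k,N}$ is meager, or to show directly that $Asy(T,T)=\Delta$.

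Suppose first that $(X,T)$ is almost equicontinuous. Being transitive, it is then uniformly rigid, i.e.\ there are $n_i\to\infty$ with $\sup_{x\in X}\rho(T^{n_i}x,x)\to0$. If $(x,y)\in Asy(T,T)$ then $\rho(T^{n_i}x,T^{n_i}y)\to0$, while uniform rigidity gives $\rho(T^{n_i}x,x)\to0$ and $\rho(T^{n_i}y,y)\to0$; the triangle inequality forces $\rho(x,y)=0$. Hence $Asy(T,T)=\Delta$ is nowhere dense, and we are done in this case.

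Suppose instead that $(X,T)$ is sensitive, with sensitivity constant $c_s>0$, and fix $k$ with $2/k<c_s$. I claim every $C_{k,N}$ is nowhere dense; since it is closed it is enough to show it contains no basic open box $U\times V$. If $U\times V\subseteq C_{k,N}$, then fixing any $x\in V$ and using the triangle inequality gives $\diam(T^nV)\le 2/k<c_s$ for all $n\ge N$. Sensitivity provides, for each $j$, a point $z_j\in B(x,1/j)\cap V$ and a time $n_j$ with $\rho(T^{n_j}x,T^{n_j}z_j)>c_s$; the diameter bound forces $n_j<N$. Passing to a subsequence on which $n_j$ equals a fixed $n^*<N$, continuity of $T^{n^*}$ together with $z_j\to x$ yields $\rho(T^{n^*}x,T^{n^*}z_j)\to0$, contradicting $>c_s$. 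Thus $\bigcup_N C_{k,N}$ is a countable union of nowhere dense sets, hence meager, and $Asy(T,T)$ is meager as a subset of it.

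The main obstacle is the almost equicontinuous case, or more precisely the structural fact that a transitive system with an equicontinuity point is uniformly rigid: this is what collapses $Asy(T,T)$ exactly onto $\Delta$. The difficulty is intrinsic rather than cosmetic---near the diagonal $Asy(T,T)$ is a tube that shrinks to $\Delta$, so in the equicontinuous regime no fixed scale $1/k$ makes $\bigcup_N C_{k,N}$ meager and one genuinely needs the identity $Asy(T,T)=\Delta$. The sensitive case, by contrast, is a short separation-of-nearby-points argument, and the sensitivity/almost-equicontinuity dichotomy for transitive systems (Akin--Auslander--Berg, Glasner--Weiss) together with uniform rigidity of the almost equicontinuous case are standard inputs.
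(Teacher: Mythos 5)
Your proposal addresses a statement that this paper never proves internally: Lemma \ref{thm:asy} is imported verbatim as a citation to \cite[Corollary 2.2]{HY02}, so there is no in-paper argument to compare against line by line. Judged on its own, your proof is correct, and it is a genuine derivation rather than a restatement of the citation. The two external inputs you lean on are real theorems: the sensitivity/almost-equicontinuity dichotomy for transitive systems and the uniform rigidity of transitive almost equicontinuous systems (Glasner--Weiss, Akin--Auslander--Berg). Both apply here because, as you note, an infinite transitive system is perfect, hence its transitive points form a dense $G_\delta$, which is the standing hypothesis of the dichotomy. Granting these, both branches close: in the almost equicontinuous case uniform rigidity plus the triangle inequality forces $Asy(T,T)=\Delta$, which is nowhere dense by perfectness; in the sensitive case each $C_{k,N}$ with $2/k$ below the sensitivity constant is closed and nowhere dense, so $Asy(T,T)$ lies in a meager $F_\sigma$. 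One small slip to repair in the sensitive branch: the auxiliary point giving $\diam(T^nV)\le 2/k$ must be taken in $U$, not in $V$ --- for $v_1,v_2\in V$ and $u\in U$ one has $\rho(T^nv_1,T^nv_2)\le\rho(T^nv_1,T^nu)+\rho(T^nu,T^nv_2)\le 2/k$ for $n\ge N$ --- while the point at which you subsequently apply sensitivity is, correctly, a point of $V$ together with perturbations $z_j\in V$. This is a renaming issue, not a gap.

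As for the comparison: your route outsources the hard structure to two standard classification results and in exchange gets more than the lemma asks for --- an exact identification $Asy(T,T)=\Delta$ in the almost equicontinuous regime, and containment in countably many closed nowhere dense sets at a single fixed scale $1/k$ in the sensitive regime. Your closing diagnosis is also the right one: no argument that tries to show the sets $C_{k,N}$ are nowhere dense for a fixed scale can work unconditionally (an irrational rotation already defeats it), so some case distinction of this kind, or a different decomposition of $Asy(T,T)$ as in the original Huang--Ye argument, is unavoidable. Whichever route \cite{HY02} takes, your proof stands independently of it, and is suitable as a self-contained substitute provided the two structural theorems are cited explicitly.
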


\begin{lem}[\mbox{\cite[Theorem 4 and Theorem 6]{M12}}] \label{thm:asy-rs}
Let $(X,T)$ be a dynamical system.
\begin{enumerate}
\item If $T^s$ is transitive, then $Prox(T^r, T^{r+s})$ is a dense $G_\delta$ subset  in $X^2$ for each $r \in \N$.
\item If $X$ is infinite and $T$ is transitive,
then $Asy(T^r,T^{r+s})$ is of first category in $X^2$ for every $r,s\in\N$.
\end{enumerate}
\end{lem}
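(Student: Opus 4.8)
The plan is to treat the two parts separately, since they rest on opposite mechanisms: part (1) uses transitivity of $T^s$ to force the orbit to hit the diagonal exactly, while part (2) uses transitivity of $T$ to force separation.

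For part (1) I would first record that $Prox(T^r,T^{r+s})$ is automatically a $G_\delta$ set. Writing $a_n(x,y)=\rho(T^{rn}x,T^{(r+s)n}y)$, the condition $\liminf_n a_n=0$ unfolds as
\[
Prox(T^r,T^{r+s})=\bigcap_{k\in\N}\bigcap_{N\in\N}\bigcup_{n\ge N}\{(x,y)\in X^2\colon a_n(x,y)<\tfrac1k\},
\]
and each set in the union is open by continuity of $T$. It remains to show that each open set $O_{k,N}:=\bigcup_{n\ge N}\{a_n<1/k\}$ is dense. Fix a basic box $U\times V$. The key point is the identity $T^{rn}(T^{sn}y)=T^{(r+s)n}y$: if I can find $n\ge N$ and $y\in V$ with $T^{sn}y\in U$, then the point $(x,y):=(T^{sn}y,y)$ lies in $U\times V$ and satisfies $a_n(x,y)=0$, so $U\times V$ meets $O_{k,N}$. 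Hence it suffices to produce arbitrarily large $n$ with $V\cap T^{-sn}U\neq\emptyset$, i.e.\ arbitrarily large elements of the hitting time set of $V$ and $U$ for the system $(X,T^s)$. Since $(X,T^s)$ is transitive and, apart from the trivial case where $X$ is a single periodic orbit (checked directly), $X$ is perfect, a transitive point of $T^s$ meets every nonempty open set at infinitely many times, so this hitting set is infinite. Baire's theorem then yields that $Prox(T^r,T^{r+s})$ is a dense $G_\delta$.

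For part (2), fix any $\delta\in(0,\diam(X)/4)$ and put
\[
F_N=\{(x,y)\in X^2\colon a_n(x,y)\le\delta\ \text{for all}\ n\ge N\}.
\]
Each $F_N$ is closed, and if $a_n(x,y)\to0$ then $(x,y)\in F_N$ for some $N$; thus $Asy(T^r,T^{r+s})\subseteq\bigcup_N F_N$, and it is enough to show every $F_N$ is nowhere dense. Suppose instead that a box $U\times V\subseteq F_N$. Since $X$ is infinite and $T$ transitive, $X$ is perfect and there is a transitive point $w$ whose orbit visits both $U$ and $V$ at infinitely many times; choosing visit times $a,b$ with $T^aw\in U$, $T^bw\in V$ and applying the defining inequality to $(T^aw,T^bw)\in U\times V$ gives
\[
\rho\bigl(T^{rn+a}w,\,T^{(r+s)n+b}w\bigr)\le\delta\qquad\text{for all }n\ge N.
\]
Writing the second time index as $(rn+a)+\bigl(sn+(b-a)\bigr)$, the \emph{gap} between the two orbit points is $sn+(b-a)$, which grows to infinity precisely because $s\ge1$. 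So along the dense orbit of $w$ the points $T^{m}w$ and $T^{m+g(m)}w$ remain within $\delta$ while the time-gap $g(m)\to\infty$, and I would finish by showing, using transitivity and infiniteness of $X$, that the orbit must realize a separation larger than $\delta$ at some admissible pair of times, contradicting the displayed bound.

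The main obstacle is exactly this last step of part (2): reconciling the growing-gap inequality with density of the orbit. The delicate feature is that only transitivity of $T$ (not of $T^s$) is assumed, so one cannot invoke density of $\{T^{sn}w\}$ directly; the essential structural input is that $s>0$, which forces the time-gap to escape to infinity and is what ultimately prevents the pair from remaining $\delta$-close. If a direct argument proves stubborn, an alternative is to mirror the proof of the classical case (Lemma~\ref{thm:asy}) from \cite{HY02}, exploiting the identity $Asy(T,T)=Asy(T^k,T^k)$ of Lemma~\ref{lem:Prox-asy} and absorbing the shift in exponents through the same transitive-point bookkeeping.
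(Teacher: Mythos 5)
The paper does not actually prove this lemma; it is imported wholesale from Moothathu \cite[Theorems 4 and 6]{M12}, so your proposal can only be judged on its own merits. Part (1) as you present it is correct and complete: the $G_\delta$ representation is standard, and producing points of the form $(T^{sn}y,y)$, which lie exactly on the set where $a_n=0$, is the right way to use transitivity of $T^s$ (with the finite periodic-orbit case checked separately, and perfectness of $X$ guaranteeing infinitely many hitting times).

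Part (2), however, has a genuine gap, and it is exactly the one you flag yourself: the contradiction is never derived. What you end with is a single orbit inequality $\rho(T^{rn+a}w,\,T^{(r+s)n+b}w)\le\delta$ for $n\ge N$ together with the remark that the time gap $sn+(b-a)$ tends to infinity; but ``$\delta$-close pairs with growing gap along a dense orbit'' is not, as stated, self-contradictory, and the phrase ``separation at some admissible pair of times'' is precisely the statement that needs proof. The missing idea is to couple \emph{consecutive} values of $n$: applying $T^r$ to the inequality at step $n$ (at the cost of the modulus of continuity $\omega_r$ of $T^r$) and comparing with the inequality at step $n+1$ gives, for $m_n=(r+s)n+r+b$, the bound $\rho(T^{m_n}w,T^{m_n+s}w)\le\delta+\omega_r(\delta)$; since the $m_n$ form an arithmetic progression of difference $r+s$, applying $T^j$ for $0\le j\le r+s-1$ spreads this to \emph{all} large $m$, so that $\rho(T^mw,T^{m+s}w)\le\delta_2$, where $\delta_2$ depends only on $\delta$ and the moduli of continuity of $T,\dotsc,T^{r+s}$ and satisfies $\delta_2\to0$ as $\delta\to0$. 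As the tail of the orbit of $w$ is dense ($X$ is perfect), continuity yields $\rho(x,T^sx)\le\delta_2$ for every $x\in X$; but $X$ infinite and transitive forces $T^s\neq\mathrm{id}$, hence $\eta:=\sup_{x\in X}\rho(x,T^sx)>0$, and choosing $\delta$ with $\delta_2<\eta$ gives the contradiction. Note two consequences for your write-up: your fixed choice $\delta\in(0,\diam(X)/4)$ is not known to be adequate --- $\delta$ must be taken small relative to the moduli of continuity and to $\eta$ (harmless, since for first category one small $\delta$ suffices, but it must be said); and your fallback of ``mirroring'' Lemma~\ref{thm:asy} via $Asy(T,T)=Asy(T^k,T^k)$ from Lemma~\ref{lem:Prox-asy} does not work, because that identity concerns equal exponents only and there is no evident reduction absorbing the mismatch between $r$ and $r+s$.
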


With the above lemmas we are able to show
\begin{thm}\label{thm:important}
Let $(X,T)$ be a non-trivial totally transitive system.
If $Prox(T,T)$ is dense in $X^2$, then
there exists a dense Mycielski set $S\subset X$
such that $S$ is $(r,s)$-scrambled for any $r,s\in\N$.
\end{thm}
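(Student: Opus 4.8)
The plan is to produce a single dense $G_\delta$ subset $R$ of $X^2$ that is contained in $Prox(T^r,T^s)\setminus Asy(T^r,T^s)$ simultaneously for every ordered pair $(r,s)\in\N\times\N$, and then to apply the Mycielski Theorem to $R$. Before doing so I would first record that $X$ is perfect: since $(X,T)$ is transitive and non-trivial, $X$ is either a periodic orbit or perfect, but a non-trivial periodic orbit of period $p>1$ makes $(X,T^p)$ have $p$ fixed points and hence non-transitive, contradicting total transitivity; so $X$ is perfect and in particular infinite. Thus $X$ is a perfect compact metric space (so the Mycielski Theorem applies), $X^2$ is compact metric and hence a Baire space, and the standing hypothesis ``$X$ infinite'' of Lemmas \ref{thm:asy} and \ref{thm:asy-rs} is met.

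Next, for each ordered pair $(r,s)$ I would show that $Prox(T^r,T^s)$ is a dense $G_\delta$ set and that $Asy(T^r,T^s)$ is of first category in $X^2$. The set $Prox(T^r,T^s)=\bigcap_{m}\bigcap_{N}\bigcup_{n\ge N}\{(x,y):\rho(T^{rn}x,T^{sn}y)<1/m\}$ is always $G_\delta$, so only its density and the meagreness of $Asy$ need the hypotheses. For $r=s$, Lemma \ref{lem:Prox-asy} identifies $Prox(T^r,T^r)$ with $Prox(T,T)$, dense by assumption, and $Asy(T^r,T^r)$ with $Asy(T,T)$, of first category by Lemma \ref{thm:asy}. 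For $r<s$, total transitivity makes $T^{s-r}$ transitive, so Lemma \ref{thm:asy-rs}(1) gives that $Prox(T^r,T^s)=Prox(T^r,T^{r+(s-r)})$ is dense $G_\delta$, while Lemma \ref{thm:asy-rs}(2) gives that $Asy(T^r,T^s)$ is of first category. For $r>s$ I would reduce to the previous case via the flip homeomorphism $\tau(x,y)=(y,x)$ of $X^2$, using that $(x,y)\in Prox(T^r,T^s)$ iff $(y,x)\in Prox(T^s,T^r)$ and the analogous identity for $Asy$; since $\tau$ is a homeomorphism it preserves both the class of dense $G_\delta$ sets and the class of first category sets.

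With these two facts in hand, for each $(r,s)$ the complement $X^2\setminus Asy(T^r,T^s)$ is residual and hence contains a dense $G_\delta$ set; intersecting it with the dense $G_\delta$ set $Prox(T^r,T^s)$ yields a dense $G_\delta$ set $R_{r,s}\subset Prox(T^r,T^s)\setminus Asy(T^r,T^s)$. As $\N\times\N$ is countable and $X^2$ is Baire, the intersection $R=\bigcap_{(r,s)}R_{r,s}$ is again a dense $G_\delta$ subset of $X^2$. Applying the Mycielski Theorem to $R$ produces a dense Mycielski set $S\subset X$ such that $(x,y)\in R$ for all distinct $x,y\in S$; by construction every such pair lies in $Prox(T^r,T^s)\setminus Asy(T^r,T^s)$ for all $r,s$, i.e.\ is $(r,s)$-scrambled, which is exactly the assertion.

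The routine part (the Baire-category assembly, the flip reduction, and the $G_\delta$ formula for $Prox$) is automatic once the inputs are in place, so the real content is carried entirely by the cited lemmas. Accordingly, the main obstacle is conceptual rather than computational: one must verify \emph{uniformly in $(r,s)$} that the diagonal case $r=s$ is covered precisely by the standing hypothesis that $Prox(T,T)$ is dense, and that every off-diagonal case falls under Lemma \ref{thm:asy-rs}, for which total transitivity is exactly what supplies the transitivity of $T^{|s-r|}$. Once this uniform verification is done, collapsing everything into one countable intersection and a single application of the Mycielski Theorem is immediate.
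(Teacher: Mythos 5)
Your proposal is correct and follows essentially the same route as the paper: combine Lemma~\ref{lem:Prox-asy} with Lemma~\ref{thm:asy} for the diagonal pairs $r=s$ and Lemma~\ref{thm:asy-rs} for distinct $r,s$, intersect the resulting dense $G_\delta$ sets $Prox(T^r,T^s)$ minus the first-category sets $Asy(T^r,T^s)$ into one residual set $R\subset X^2$, and apply the Mycielski Theorem. The only difference is that you spell out details the paper leaves implicit --- the perfectness of $X$ (needed for Mycielski), the $G_\delta$ structure of $Prox$ in the diagonal case, and the flip-symmetry reduction for $r>s$ --- all of which are correct.
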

\begin{proof}
As $T$ is totally transitive,
by Lemma~\ref{thm:asy-rs} $Prox(T^r,T^s)$ is a dense $G_\delta$ subset  in $X^2$
for any two distinct $r,s \in \N$.
By Lemma \ref{thm:asy-rs} $Asy(T^r,T^{s})$ is of first category in $X^2$ for any two distinct $r,s\in\N$.
Since $Prox(T,T)$ is  dense in $X^2$, by Lemma~\ref{lem:Prox-asy} $Prox(T^r,T^r)$ is
a dense $G_\delta$ subset  in $X^2$ for any $r\in \N$.
By Lemmas~\ref{lem:Prox-asy} and~\ref{thm:asy}
$Asy(T^r,T^{r})$ is of first category in $X^2$ for $r\in\N$.
Let
\[R=\bigcap_{r,s\in\N} Prox(T^r,T^s)\setminus \bigcup_{r,s\in\N} Asy(T^r,T^s).\]
Then $R$ is residual in $X\times X$.
By the Mycielski Theorem, there exists a  dense Mycielski set $S\subset X$
such that for any two distinct points $x,y\in S$, the pair $(x,y)\in R$.
It is clear that $S$ is $(r,s)$-scrambled for any $r,s\in\N$.
\end{proof}

Recall that a dynamical system is {\it scattering}, if its Cartesian product with any minimal system is transitive.
It is clear that a scattering system is totally transitive.  As applications of Theorem \ref{thm:important} we have
\begin{cor}
Let $(X,T)$ be a dynamical system.
If it satisfies one of the following conditions,
\begin{enumerate}
  \item $T$  is totally transitive and has a periodic point,
  \item $T$ is scattering,
  \item $T$ is weakly mixing,
\end{enumerate}
then $(X,T)$ is non-classic Li-Yorke chaotic.
\end{cor}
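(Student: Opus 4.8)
The plan is to reduce each of the three cases to Theorem~\ref{thm:important}, whose hypotheses are that $(X,T)$ be non-trivial and totally transitive and that $Prox(T,T)$ be dense in $X^2$. Once these two properties are verified, Theorem~\ref{thm:important} produces a dense Mycielski set $S\subset X$ that is $(r,s)$-scrambled for every $r,s\in\N$; since a dense Mycielski set is uncountable, this says precisely that $(X,T)$ is $(r,s)$-Li-Yorke chaotic for all $r,s\in\N$, i.e.\ non-classically Li-Yorke chaotic. So the whole argument consists in checking, under each of (1)--(3), that the system is non-trivial, totally transitive, and has dense proximal relation.

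First I would dispose of total transitivity and non-triviality, which is routine. In case (3), weak mixing of $T$ forces every power $T^n$ to be weakly mixing, hence transitive, so $T$ is totally transitive, and a non-trivial weakly mixing system is perfect and therefore infinite. In case (2), a scattering system is totally transitive (as noted before the corollary) and is non-trivial by hypothesis. In case (1) total transitivity is assumed outright, and a non-trivial totally transitive system is necessarily infinite, since a finite transitive system is a single periodic orbit, which fails to be totally transitive unless it is a point.

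The substance is the density of $Prox(T,T)$ in $X^2$. In case (3) this is immediate from Proposition~\ref{prop:weak-mixing}: for each $m$ the set $G_m=\{(x,y)\in X^2\colon \rho(T^n x,T^n y)<1/m \text{ for some } n\}$ is open, and it is dense because, given non-empty open $U,V$ and an open set $W$ of diameter $<1/m$, weak mixing yields a common $n$ with $U\cap T^{-n}W\neq\emptyset$ and $V\cap T^{-n}W\neq\emptyset$, placing a point of $U\times V$ into $W\times W$; intersecting over $m$ shows $Prox(T,T)=\bigcap_m G_m$ is a dense $G_\delta$. For case (2) I would fix a minimal subsystem $M\subseteq X$ with a minimal point $z$; scattering makes $X\times M$ transitive, and weak disjointness from $\overline{\orb(z)}$ is exactly what lets one funnel two prescribed open sets simultaneously toward $z$, producing proximal pairs in every box $U\times V$ (this is the mechanism behind the scattering case of \cite{AGHSY}). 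For case (1), letting $e$ be a periodic point of period $q$, total transitivity makes $T^q$ transitive with the fixed point $e$; I would then use $e$ as a common target, exploiting total transitivity to overlap the hitting sets $N(U,B(e,\varepsilon))$ and $N(V,B(e,\varepsilon))$ and thereby drive $U$ and $V$ simultaneously near $e$, giving density of $Prox(T^q,T^q)$; finally Lemma~\ref{lem:Prox-asy} identifies $Prox(T^q,T^q)=Prox(T,T)$, transferring the conclusion back to $T$.

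The main obstacle is precisely this last density statement in the product $X^2$ for cases (1) and (2). Producing a proximal pair near an arbitrary $(x_0,y_0)$, in particular one lying far from the diagonal, requires steering two independent orbits close to one another at a single common time, and the mere existence of a transitive point or of a fixed point does not by itself supply such a coincidence. The key idea is to use the distinguished point---the minimal point $z$ under scattering, the periodic point $e$ under total transitivity---as a shared funnel: weak disjointness from $\overline{\orb(z)}$ forces the required simultaneity in the scattering case, while in the totally transitive case one must verify that the thick hitting sets into a neighbourhood of the fixed point can be made to overlap. Once $Prox(T,T)$ is shown dense, the remainder---total transitivity, non-triviality, and the passage through Theorem~\ref{thm:important} and Lemma~\ref{lem:Prox-asy}---is bookkeeping.
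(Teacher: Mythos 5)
Your proposal follows essentially the same route as the paper: the paper's entire proof consists of noting that each of the three hypotheses makes $Prox(T,T)$ dense in $X^2$ (citing Theorems 4.1 and 4.3 of \cite{HY02}) and then applying Theorem~\ref{thm:important}, which is exactly your reduction. The only difference is that where the paper simply cites \cite{HY02} for the density of the proximal relation, you sketch direct arguments for it (complete in the weakly mixing case, and the funnelling-through-a-periodic/minimal-point idea for cases (1) and (2)); these are precisely the facts proved in \cite{HY02} and \cite{AGHSY}, so this is a matter of re-deriving the cited input rather than a different approach.
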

\begin{proof} Note that if $(X,T)$ satisfies one of the conditions, then $Prox(T,T)$ is dense in $X^2$
(see Theorem 4.1 and Theorem 4.3 in \cite{HY02}).
So Theorem~\ref{thm:important} applies.
\end{proof}

\begin{cor}
Let $(X,T)$ be a totally transitive system.
If there exists a subsystem $(Y,T)$ such that $(X\times Y,T\times T)$ is transitive,
then $(X,T)$ is non-classic Li-Yorke chaotic.
\end{cor}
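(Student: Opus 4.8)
The plan is to derive this corollary from Theorem~\ref{thm:important} in exactly the way the preceding corollary was derived. Theorem~\ref{thm:important} requires three inputs: that $(X,T)$ be non-trivial, that it be totally transitive, and that $Prox(T,T)$ be dense in $X^2$. Total transitivity is part of the hypothesis, and non-triviality is automatic: a finite totally transitive system reduces to a single fixed point (total transitivity of a periodic orbit forces period one), which is not chaotic, so we may assume $X$ is infinite. Thus the whole content of the argument is the verification that the proximal relation $Prox(T,T)$ is dense in $X^2$.

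First I would establish this density from the subsystem hypothesis; this is precisely the mechanism behind the result of \cite{AGHSY} quoted above. The relation $Prox(T,T)=\bigcap_{k\ge1}\bigcup_{n\ge0}(T\times T)^{-n}\{(u,v)\in X^2:\rho(u,v)<1/k\}$ is a $G_\delta$, so by the Baire category theorem it suffices to show that for all non-empty open $U,V\subset X$ and every $\varepsilon>0$ there are $x\in U$, $y\in V$ and $n\in\N$ with $\rho(T^nx,T^ny)<\varepsilon$. The starting observation is that every transitive point $(p,q)$ of $(X\times Y,T\times T)$ is itself a proximal pair: its orbit is dense in $X\times Y$ and hence accumulates on the diagonal points $(w,w)$ with $w\in Y$, forcing $\liminf_n\rho(T^np,T^nq)=0$. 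This already produces proximal pairs densely inside $X\times Y$. The transitivity of $X$ is then used to move the free second coordinate out of $Y$ and into an arbitrary open set $V$, which upgrades density in $X\times Y$ to density in the full product $X^2$.

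Once $Prox(T,T)$ is known to be dense the proof concludes at once: as $(X,T)$ is a non-trivial totally transitive system with dense proximal relation, Theorem~\ref{thm:important} furnishes a dense Mycielski set $S\subset X$ that is $(r,s)$-scrambled for every $r,s\in\N$. Since $S$ is uncountable, $(X,T)$ is $(r,s)$-Li-Yorke chaotic for all $r,s$, that is, non-classic Li-Yorke chaotic.

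The main obstacle is the second step, namely extracting density of $Prox(T,T)$ in the \emph{full} product $X^2$ from the subsystem condition. The easy half---density of proximal pairs within $X\times Y$---is immediate from the existence of transitive points. The delicate point is that $V$ need not meet $Y$ (for instance $Y$ could be a single fixed point), so one cannot simply realize the second coordinate as a point of $Y$; overcoming this requires combining the proximal pairs coming from $X\times Y$ with the transitivity of $X$, and this is exactly where the argument of \cite{AGHSY} does its work. Everything else is a routine check of the hypotheses of Theorem~\ref{thm:important}.
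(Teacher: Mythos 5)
Your proposal is correct and follows essentially the same route as the paper: the paper's own proof simply observes that the subsystem hypothesis makes $Prox(T,T)$ dense in $X^2$, deferring exactly as you do to the proof of Theorem 3.1 in \cite{AGHSY}, and then invokes Theorem~\ref{thm:important}. Your extra remarks (checking non-triviality, and flagging that the delicate point is pushing the second coordinate out of $Y$) are consistent with, and slightly more careful than, the paper's two-line argument.
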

\begin{proof} Note that if $(X,T)$ satisfies the assumption, then $Prox(T,T)$ is dense in $X^2$ (see the proof of Theorem 3.1 in \cite{AGHSY}).
So Theorem~\ref{thm:important} applies.
\end{proof}

We say that a dynamical system is {\it proximal} if every two points in $X$ form a proximal pair, that is
$Prox(T,T)=X^2$.
It is well known that $(X,T)$ is proximal if and only if
$(X,T)$ has only one minimal point which is also a fixed point (see for example, \cite{HY01, AK03}).
It was shown in \cite{M12} that $(X,T)$ is proximal if and only if $Prox(T^r,T^s)=X^2$ for every $r,s\in\N$.
If one only has $Prox(T^r, T^s) = X^2$ for some $r, s \in \N$ then
it is not difficult to show $(X,T)$ has only one minimal subsystem.
Moothathu asked the following question in \cite{M12}.

\begin{ques}\label{q-2}
Can we show that $T$ has only one minimal point if $Prox(T^r, T^s) = X^2$ for some $r, s \in \N$?
\end{ques}

We give a negative answer to this question. Recall
that an invertible dynamical system  $(X, T)$ is called {\it proximal orbit dense}, or a POD system, if
$(X, T)$ is
totally minimal; and whenever $x, y\in X$ with $x\neq y$, then for some $n\neq 0$, $T^ny$ is
proximal to $x$. An interesting property of POD is that (see \cite[Corollary 3.5]{KN}):

\begin{lem}\label{pod} Let $(X, T)$ be POD and $r,s\in \mathbb{N}$ with $r\neq s$. Then $(X\times X, T^r \times T^s)$ is minimal.
\end{lem}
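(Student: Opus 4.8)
The plan is to show that every minimal subset $M\subseteq X\times X$ of $(X\times X,S)$, where $S:=T^{r}\times T^{s}$, satisfies $M=X\times X$; since minimal subsets exist by Zorn's lemma and every nonempty closed invariant set contains one, this gives minimality. First I would record the easy half. The projections $\pi_{1},\pi_{2}\colon X\times X\to X$ are factor maps from $(X\times X,S)$ onto $(X,T^{r})$ and $(X,T^{s})$, since $\pi_{1}\circ S=T^{r}\circ\pi_{1}$ and $\pi_{2}\circ S=T^{s}\circ\pi_{2}$. As $(X,T)$ is POD, hence totally minimal, both $(X,T^{r})$ and $(X,T^{s})$ are minimal, so the closed invariant sets $\pi_{1}(M)$ and $\pi_{2}(M)$ equal $X$. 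Thus $M$ is a joining of $(X,T^{r})$ and $(X,T^{s})$ surjecting onto each factor, and the whole content of the lemma is that this joining is the full product, i.e.\ that each fibre $M_{a}:=\{b\in X\colon (a,b)\in M\}$ is all of $X$. (The hypothesis $r\ne s$ is genuinely needed here: if $r=s$ the diagonal is an $S$-invariant set carrying a copy of the minimal system $(X,T^{r})$, hence a proper minimal subset whenever $X$ is nontrivial.)

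Fixing $(x,y)\in M$, so $M=\overline{\orb((x,y),S)}$, filling the fibres amounts to a synchronization statement: for all $a,b\in X$ there is a sequence $n_{i}$ with $T^{rn_{i}}x\to a$ and $T^{sn_{i}}y\to b$ simultaneously. Minimality of $(X,T^{r})$ alone produces $n_{i}$ with $T^{rn_{i}}x\to a$, and along a subsequence $T^{sn_{i}}y\to b^{\ast}$ for some $b^{\ast}$, giving one point $(a,b^{\ast})\in M$; the task is to steer this second coordinate to an arbitrary prescribed $b$ while keeping the first coordinate pinned at $a$. This is exactly the step where total minimality is insufficient, since a product of minimal systems need not be minimal, and where the POD hypothesis must enter.

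The mechanism I would use to inject POD is the proximal structure, transported across powers by Lemma~\ref{lem:Prox-asy}, so that ``proximal'' is unambiguous for $T$, $T^{r}$ and $T^{s}$. Since $T$ is invertible, the enveloping semigroup $E=E(X,T)$ is available, $T$ is central in $E$, and proximality of two points is witnessed by a minimal idempotent collapsing them. The POD condition that every orbit enters every proximal cell says that for $b^{\ast}\ne b$ there are $k\ne0$ and a minimal idempotent $w$ with $wb^{\ast}=wT^{k}b=T^{k}wb$. Working inside $\mathcal E=E(X\times X,S)\subseteq E\times E$, I would fix a minimal idempotent $v=(v_{1},v_{2})\in\mathcal E$ with $v(x,y)=(x,y)$ (every point of a minimal system is fixed by some minimal idempotent) and use the abundance of such proximal idempotents, together with the fact that $T^{r}$ and $T^{s}$ act by \emph{different} central powers (here $r\ne s$ is essential), to produce elements of $\mathcal E$ that fix the first coordinate $a$ while moving the second coordinate densely through $X$. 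Feeding these into $M=\mathcal E(x,y)$ would yield $M_{a}=X$ for every $a$, hence $M=X\times X$.

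The main obstacle is precisely this decoupling of the two coordinates, i.e.\ filling the fibre $M_{a}$: advancing by $S^{n}$ displaces both coordinates at once, and total minimality gives no control over the second coordinate along the syndetic set of return times of the first. The full force of POD, that every orbit meets every proximal cell, is what supplies the missing independence, and I expect the delicate bookkeeping in the enveloping semigroup, ensuring the chosen idempotents fix $a$ while $r\ne s$ genuinely separates the two central actions, to be the technical heart of the argument; this is the computation carried out in~\cite[Corollary~3.5]{KN}.
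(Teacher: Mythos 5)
Your proposal does not actually prove the lemma: everything you establish (existence of minimal subsets, surjectivity of both projections via total minimality, nonemptiness of fibres, the observation that $r\neq s$ is necessary) is routine and never touches the real content, while the one step that \emph{is} the content --- producing, for a fixed $a\in X$, elements of the enveloping semigroup of $(X\times X,\,T^r\times T^s)$ that pin the first coordinate at $a$ while sweeping the second coordinate densely through $X$ --- is only asserted, never argued. POD hands you proximality of $x$ to some $T^ky$ under the \emph{diagonal} action of $T$; how that is converted into control of the $T^r\times T^s$-orbit closure, i.e.\ into the decoupling of the two coordinates, is precisely the difficulty, and your sketch gives no mechanism for it. Tellingly, the hypothesis $r\neq s$, which you correctly note must be used in an essential way (the statement is false for $r=s$), never actually enters your idempotent manipulations; an argument in which a necessary hypothesis plays no role cannot be complete. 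You concede this yourself by deferring ``the technical heart'' to \cite[Corollary 3.5]{KN}.

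That said, the comparison with the paper is somewhat moot: the paper offers no proof either, but simply quotes the result from \cite[Corollary 3.5]{KN}. So your attempt ultimately rests on exactly the same external citation as the paper's treatment; the difference is only that you wrap that citation in correct but standard scaffolding (a minimal subset of the product is a joining projecting onto both minimal factors, so the issue is whether such a joining must be the full product). If your intent was a self-contained proof, the gap above is genuine and is the whole theorem; if your intent was a reduction to the literature, then you have reproduced the paper's approach, and the preliminary reductions, while harmless, buy nothing that the citation does not already cover.
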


A special class of POD is doubly minimal systems.

\begin{defn} An invertible dynamical system $(X, T)$ is said to be {\it doubly minimal} if for all $x\in X$ and
$y\not \in \{T^nx\}_{n\in \mathbb{Z}}$, the orbit $\{(T\times T)^n(x,y):n\in \mathbb{Z}\}$ of $(x, y)$ is dense in $X\times X$.
\end{defn}
The first example of a non-periodic doubly minimal system was constructed in
\cite{K} in the symbolic dynamics (in fact, any doubly minimal system is a subshift \cite{HY-cims}). Doubly minimal systems are natural in the sense
that any ergodic system with zero entropy has a uniquely ergodic model which is
doubly minimal \cite{W}. By Lemma \ref{pod}, we have the following result which serves a counterexample for Question \ref{q-2}:
\begin{prop}
Let $(X,T)$ be a POD system. Then $Prox(T^r, T^s) = X^2$ for $r,s\in \mathbb{N}$ with $r\neq s$.
\end{prop}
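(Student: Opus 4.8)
The plan is to read everything off Lemma~\ref{pod}. Set $S=T^r\times T^s$, a homeomorphism of the compact space $X\times X$ (recall a POD system is invertible). Since $r\neq s$, Lemma~\ref{pod} gives that the system $(X\times X,S)$ is minimal. Fix an arbitrary pair $(x,y)\in X^2$; by the definition of $Prox(T^r,T^s)$ it suffices to prove
\[\liminf_{n\to\infty}\rho(T^{rn}x,T^{sn}y)=0,\]
for then $(x,y)\in Prox(T^r,T^s)$ and, $(x,y)$ being arbitrary, $Prox(T^r,T^s)=X^2$.

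First I would pass to the $\omega$-limit set of $(x,y)$ under $S$,
\[\omega(x,y)=\bigcap_{N\geq 1}\overline{\{S^n(x,y)\colon n\geq N\}}.\]
This set is non-empty by compactness of $X\times X$, it is closed, and it satisfies $S(\omega(x,y))\subseteq\omega(x,y)$, so $(\omega(x,y),S)$ is a subsystem of $(X\times X,S)$ in the sense of the paper. By the minimality supplied by Lemma~\ref{pod}, this forces $\omega(x,y)=X\times X$. In particular the diagonal point $(x,x)$ (any diagonal point would do) lies in $\omega(x,y)$, so by the definition of the $\omega$-limit set there is a strictly increasing sequence $n_k\to\infty$ with
\[S^{n_k}(x,y)=(T^{rn_k}x,T^{sn_k}y)\longrightarrow (x,x).\]
Hence $\rho(T^{rn_k}x,T^{sn_k}y)\to 0$, which yields $\liminf_{n\to\infty}\rho(T^{rn}x,T^{sn}y)=0$ and completes the argument.

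The only genuine obstacle is to extract recurrence toward the diagonal along \emph{positive} times $n\to\infty$, rather than merely along the two-sided orbit: minimality of the $\mathbb{Z}$-action $S$ is a statement about full orbits, whereas $Prox(T^r,T^s)$ is defined by a $\liminf$ over $n\to\infty$. The $\omega$-limit computation above is precisely what bridges this gap, using that in a minimal system every forward orbit is dense (equivalently, $\omega(x,y)$ already equals all of $X\times X$). Everything else is immediate from Lemma~\ref{pod}.
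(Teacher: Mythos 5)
Your proof is correct and takes essentially the same route as the paper: the paper states the proposition as an immediate consequence of Lemma~\ref{pod} (minimality of $(X\times X, T^r\times T^s)$), leaving implicit the deduction that minimality forces the forward orbit of every pair $(x,y)$ to approach the diagonal. Your $\omega$-limit set computation simply fills in that routine step (forward orbits are dense in a minimal system generated by a homeomorphism), so the two arguments coincide in substance.
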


We call $(X,T)$ {\it completely scrambled} if  the whole space $X$ is a scrambled set.
In~\cite{HY01}, Huang and Ye constructed examples of compacta with
completely scrambled homeomorphisms.
Their examples, however, are not transitive.
Later in~\cite{HY02}, Huang and Ye showed that every almost equicontinuous
but not minimal system has a completely scrambled factor.
Recently, the authors in \cite{FHLO15} showed that
some weakly mixing completely scrambled systems which are proximal and uniform rigid.
Recall that a dynamical system is {\it uniform rigid} if there exists an increasing sequence
$\{n_k\}$ of positive integers such that
\[\lim_{k\to\infty}\sup_{x\in X}\rho(T^{n_k}x,x)=0,\]
that is $\{T^{n_k}\}_{k=1}^\infty$ converges uniformly to the identity map on $X$.
Moothathu proposed the following question in \cite{M12}.
\begin{ques}\label{q-3}
Is there a non-trivial dynamical system $(X,T)$ such that $(x,y)$
is $(r,s)$-scrambled  for any two distinct $x,y\in X$ and any two
distinct $r,s\in\N$?
\end{ques}
We give an affirmative answer to this question as the following theorem shows.
\begin{thm}
There are some dynamical systems $(X,T)$ such that $(x,y)$
is $(r,s)$-scrambled for any two distinct $x,y\in X$ and any $r,s\in\N$.
\end{thm}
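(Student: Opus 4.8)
The plan is to exhibit one example that already meets all the requirements at once, rather than to build a new system from scratch. The guiding observation is that two properties, taken together, force every distinct pair to be $(r,s)$-scrambled for all $r,s$: \emph{proximality} controls the $\liminf$, while \emph{uniform rigidity} controls the $\limsup$. Accordingly, I would take $(X,T)$ to be a non-trivial dynamical system that is simultaneously proximal and uniform rigid; such systems exist by \cite{FHLO15} (their examples are in fact weakly mixing and completely scrambled), so this is the only non-routine ingredient, and it is supplied by a cited construction.

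For the $\liminf$ condition I would invoke proximality through the characterization of Moothathu recalled above: a system is proximal if and only if $Prox(T^r,T^s)=X^2$ for every $r,s\in\N$. Hence for every pair $x,y\in X$ and all $r,s\in\N$ one has $(x,y)\in Prox(T^r,T^s)$, i.e.\ $\liminf_{n\to\infty}\rho(T^{rn}x,T^{sn}y)=0$. For the $\limsup$ condition I would use uniform rigidity. Let $\{n_k\}$ be a sequence of positive integers with $T^{n_k}\to\mathrm{id}$ uniformly, and set $\epsilon_k=\sup_{w\in X}\rho(T^{n_k}w,w)\to 0$. A telescoping estimate gives, for each fixed $r\in\N$,
\[
\sup_{x\in X}\rho(T^{rn_k}x,x)\leq\sum_{j=0}^{r-1}\sup_{x\in X}\rho\bigl(T^{n_k}(T^{jn_k}x),T^{jn_k}x\bigr)\leq r\,\epsilon_k\longrightarrow 0,
\]
so $T^{rn_k}\to\mathrm{id}$ uniformly, and likewise $T^{sn_k}\to\mathrm{id}$ uniformly. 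Consequently, for any two distinct $x,y\in X$,
\[
\rho(T^{rn_k}x,T^{sn_k}y)\longrightarrow\rho(x,y)>0\qquad(k\to\infty),
\]
whence $\limsup_{n\to\infty}\rho(T^{rn}x,T^{sn}y)\geq\rho(x,y)>0$.

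Combining the two conclusions shows that every distinct pair $(x,y)$ is $(r,s)$-scrambled for all $r,s\in\N$, which is exactly the assertion of the theorem (note the argument even covers the case $r=s$). I do not expect a genuine obstacle here: the substantive content is the existence of a non-trivial proximal uniform rigid system, which is precisely what \cite{FHLO15} provides, and the derivation of the two limit conditions from proximality and uniform rigidity is elementary, the only point worth checking being that uniform convergence $T^{n_k}\to\mathrm{id}$ propagates to $T^{rn_k}\to\mathrm{id}$, handled by the telescoping bound above.
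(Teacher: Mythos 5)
Your proposal is correct, and it rests on the same example as the paper: a non-trivial proximal, uniformly rigid system supplied by \cite{FHLO15}, with the $\liminf$ condition handled exactly as in the paper, via Moothathu's characterization that proximality gives $Prox(T^r,T^s)=X^2$ for all $r,s\in\N$. Where you genuinely diverge is the $\limsup$ condition. The paper argues through the asymptotic relations: it notes that $Asy(T^k,T^k)\setminus \Delta_X=\emptyset$ follows from uniform rigidity, but for distinct exponents it runs a separate contradiction argument --- if $(x,y)\in Asy(T^r,T^{r+s})$ and $x$ is not fixed, then recurrence of $x$ under $T^r$ plus continuity force $T^r(x)=T^{r+s}(x)$, so $x$ is periodic, contradicting proximality (whose unique minimal point is a fixed point), with a final case analysis when both points are fixed. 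Your telescoping bound $\sup_{x\in X}\rho(T^{rn_k}x,x)\leq r\,\epsilon_k$ shows instead that every power $T^r$ is uniformly rigid along the \emph{same} sequence $\{n_k\}$, whence $\rho(T^{rn_k}x,T^{sn_k}y)\to\rho(x,y)>0$ for any distinct $x,y$; this yields the positive $\limsup$ directly, with no recurrence argument, no case analysis, and no second appeal to proximality. Your route is more elementary and even quantitatively sharper, exhibiting an explicit subsequence along which the orbits separate to distance $\rho(x,y)$, while the paper's route records the (equivalent, but less directly obtained) structural fact that $Asy(T^r,T^{r+s})$ is trivial off the diagonal; both arguments cover the case $r=s$ as required by the statement.
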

\begin{proof}
Let $(X,T)$ be a dynamical system which is totally transitive, proximal and uniformly rigid.
It is shown in \cite[Proposition 5]{M12} 
that if $(X,T)$ is proximal then $Prox(T^r,T^s)=X^2$ for all $r,s\in \mathbb{N}$.
We also have $Asy(T^k,T^k)\setminus \Delta_X=\emptyset$ for $k\in\N$ by the uniform rigidity.
We want to show that $Asy(T^r,T^{r+s})\setminus \Delta=\emptyset$ for $r,s\in\N$.
Assume that $(x,y)\in Asy(T^r,T^{r+s})$.
First assume that $x$ is not a fixed point of $T$.
Since $x$ is recurrent for $T^r$, there exists a sequence $\{n_i\}$ of positive integers such that
\[x=\lim_{i\to\infty}T^{r\cdot n_i} (x)=\lim_{i\to\infty}T^{(r+s)\cdot n_i}(y).\]
By the continuity of $T$, we have
\[\lim_{i\to\infty}T^{r\cdot (n_i+1)} (x)=T^r(x)
\text{ and } \lim_{i\to\infty}T^{(r+s)\cdot (n_i+1)}(y)=T^{r+s}(x).\]
But
\[\lim_{i\to\infty}T^{r\cdot (n_i+1)}(x)= \lim_{i\to\infty}T^{(r+s)\cdot (n_i+1)}(y).\]
So $T^r(x)=T^{r+s}(x)$ and $x$ is periodic point.
But $x$ is not a fixed point, which contradicts to the proximality of $(X,T)$.

The same proof applies if $y$ is not a fixed point of $T$. If both of $x$ and $y$ are fixed points, then $x=y$ since $(x,y)\in Asy(T^r,T^{r+s})$.
This ends the proof.
\end{proof}

\bibliographystyle{amsplain}

\end{document}